\documentclass[graybox,envcountsame,envcountresetchap]{svmult}


\usepackage{mathptmx}       
\usepackage{helvet}         
\usepackage{courier}        
\usepackage{type1cm}        
%
\usepackage{makeidx}         
\usepackage{graphicx}        
\usepackage{multicol}        
\usepackage[bottom]{footmisc}

\usepackage{amssymb}
\usepackage{amsmath}
\usepackage{amsfonts}
\usepackage{comment}
\usepackage{graphicx}
\usepackage{tikz,ifthen}
\usetikzlibrary{calc}
\usepackage{mathtools}
\usepackage{etoolbox}
\usetikzlibrary{decorations.markings,patterns,decorations.pathreplacing,intersections}
\usetikzlibrary{backgrounds}

\DeclareMathOperator{\SL}{SL}
\DeclareMathOperator{\Sp}{Sp}
\DeclareMathOperator{\GL}{GL}
\DeclareMathOperator{\Arg}{Arg}

\newcommand{\fp}{\qed}

\newcommand{\psmm}[4]{\left(\begin{smallmatrix}{#1}&{#2}\\{#3}&{#4}\end{smallmatrix}\right)}
\renewcommand{\H}{\mathcal H}
\newcommand{\FF}{\mathfrak F}

\renewcommand{\E}{\mathcal E}
\newcommand{\RC}{\mathcal R}
\newcommand{\ov}[1]{\overline{#1}}
\newcommand{\wh}[1]{\widehat{#1}}
\newcommand{\Q}{{\mathbb Q}}
\newcommand{\Z}{{\mathbb Z}}
\renewcommand{\P}{{\mathbb P}}
\newcommand{\F}{{\mathbb F}}
\newcommand{\R}{{\mathbb R}}
\newcommand{\C}{{\mathbb C}}
\newcommand{\Om}{\Omega}

\newcommand{\G}{\Gamma}

\newcommand{\om}{\omega}

\newcommand{\al}{\alpha}
\newcommand{\be}{\beta}
\newcommand{\ga}{\gamma}

\newcommand{\la}{\lambda}
\renewcommand{\th}{\theta}

\newcommand{\z}{\zeta}
\newcommand{\eps}{\varepsilon}
\renewcommand{\pmod}[1]{\allowbreak\ ({\rm{mod}}\,\,#1)}
\newcommand{\lgs}[2]{\mbox{$\left(\frac{#1}{#2}\right)$}}
\newcommand{\leg}[2]{\mbox{$\left(\dfrac{#1}{#2}\right)$}}
\newcommand{\isom}{\simeq}
\DeclareMathOperator{\Tr}{Tr}

\def\cal{\mathcal}

\makeatletter
\def\renewtheorem#1{%
  \expandafter\let\csname#1\endcsname\relax
  \expandafter\let\csname c@#1\endcsname\relax
  \gdef\renewtheorem@envname{#1}
  \renewtheorem@secpar
}
\def\renewtheorem@secpar{\@ifnextchar[{\renewtheorem@numberedlike}{\renewtheorem@nonumberedlike}}
  \def\renewtheorem@numberedlike[#1]#2{\newtheorem{\renewtheorem@envname}[#1]{#2}}
  \def\renewtheorem@nonumberedlike#1{
    \def\renewtheorem@caption{#1}
    \edef\renewtheorem@nowithin{\noexpand\newtheorem{\renewtheorem@envname}{\renewtheorem@caption}}
    \renewtheorem@thirdpar
  }
  \def\renewtheorem@thirdpar{\@ifnextchar[{\renewtheorem@within}{\renewtheorem@nowithin}}
    \def\renewtheorem@within[#1]{\renewtheorem@nowithin[#1]}
    \makeatother

\renewtheorem{theorem}{Theorem}[section]
\renewtheorem{remarks}[theorem]{{\it Remarks}}
\smartqed

\makeindex             


\begin{document}

\title*{An Introduction to Modular Forms}
\author{Henri Cohen}
\institute{Henri Cohen \at Institut de Math\'ematiques de Bordeaux, Universit\'e de Bordeaux, 351 Cours de la Lib\'eration, 33405 TALENCE Cedex, FRANCE, \email{Henri.Cohen@math.u-bordeaux.fr}}

%
%
\maketitle

\abstract{\\ In this course we introduce the main notions relative to the
classical theory of modular forms. A complete treatise in a similar style
can be found in the author's book joint with F.~Str\"omberg \cite{Coh-Str}.}

\section{Functional Equations}

Let $f$ be a complex function defined over some subset $D$ of $\C$.
A \emph{functional equation} is some type of equation relating the value
of $f$ at any point $z\in D$ to some other point, for instance
$f(z+1)=f(z)$. If $\ga$ is some function from $D$ to itself, one can ask
more generally that $f(\ga(z))=f(z)$ for all $z\in D$ (or even
$f(\ga(z))=v(\ga,z)f(z)$ for some known function $v$). It is clear that
$f(\ga^m(z))=f(z)$ for all $m\ge0$, and even for all $m\in\Z$ if $\ga$
is invertible, and more generally the set of bijective functions $u$ such that
$f(u(z))=f(z)$ forms a \emph{group}.

Thus, the basic setting of functional equations (at least of the type that we
consider) is that we have a group of transformations $G$ of $D$,
that we ask that $f(u(z))=f(z)$ (or more generally $f(u(z))=j(u,z)f(z)$ for
some known $j$) for all $u\in G$ and $z\in D$, and we ask for some type of
regularity condition on $f$ such as continuity, meromorphy, or holomorphy.

Note that there is a trivial but essential way to construct from scratch
functions $f$ satisfying a functional equation of the above type: simply
choose any function $g$ and set $f(z)=\sum_{v\in G}g(v(z))$. Since $G$ is a
group, it is clear that \emph{formally} $f(u(z))=f(z)$ for $u\in G$. Of course
there are convergence questions to be dealt with, but this is a fundamental
construction, which we call \emph{averaging} over the group.

We consider a few fundamental examples.

\subsection{Fourier Series}

We choose $D=\R$ and $G=\Z$ acting on $\R$ by translations. Thus, we ask
that $f(x+1)=f(x)$ for all $x\in\R$. It is well-known that this leads to the
theory of \emph{Fourier series}: if $f$ satisfies suitable regularity
conditions (we need not specify them here since in the context of modular forms
they will be satisfied) then $f$ has an expansion of the type
$$f(x)=\sum_{n\in\Z}a(n)e^{2\pi inx}\;,$$
absolutely convergent for all $x\in\R$, where the \emph{Fourier coefficients}
$a(n)$ are given by the formula
$$a(n)=\int_0^1e^{-2\pi inx}f(x)\,dx\;,$$
which follows immediately from the orthonormality of the functions
$e^{2\pi imx}$ (you may of course replace the integral from $0$ to $1$
by an integral from $z$ to $z+1$ for any $z\in\R$).

An important consequence of this, easily proved, is the \emph{Poisson
summation formula}: define the \emph{Fourier transform} of $f$ by
$$\wh{f}(x)=\int_{-\infty}^{\infty}e^{-2\pi i xt}f(t)\,dt\;.$$
We ignore all convergence questions, although of course they must be
taken into account in any computation.

Consider the function $g(x)=\sum_{n\in\Z}f(x+n)$, which is exactly the
averaging procedure mentioned above. Thus $g(x+1)=g(x)$, so $g$ has
a Fourier series, and an easy computation shows the following
(again omitting any convergence or regularity assumptions):

\begin{proposition}[Poisson summation] We have
$$\sum_{n\in\Z}f(x+n)=\sum_{m\in\Z}\wh{f}(m)e^{2\pi imx}\;.$$
In particular
$$\sum_{n\in\Z}f(n)=\sum_{m\in\Z}\wh{f}(m)\;.$$
\end{proposition}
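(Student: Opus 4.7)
The plan is to apply the Fourier series machinery just developed to the averaged function $g(x)=\sum_{n\in\Z}f(x+n)$, exactly as suggested in the paragraph preceding the statement. The proof then has four steps which I outline below.

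First, I would verify that $g$ is $1$-periodic: substituting $x+1$ for $x$ and reindexing the summation variable $n\mapsto n-1$ gives $g(x+1)=g(x)$. Assuming the regularity conditions required for Fourier expansion are met, $g$ therefore has an absolutely convergent expansion $g(x)=\sum_{m\in\Z}c(m)e^{2\pi imx}$ with coefficients $c(m)=\int_0^1 g(x)e^{-2\pi imx}\,dx$.

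Second, and this is the essential computation, I would evaluate $c(m)$ by exchanging summation and integration and then unfolding the sum of integrals on $[0,1]$ into a single integral on $\R$:
$$c(m)=\sum_{n\in\Z}\int_0^1 f(x+n)e^{-2\pi imx}\,dx=\sum_{n\in\Z}\int_n^{n+1}f(u)e^{-2\pi imu}\,du=\wh{f}(m),$$
where I use the substitution $u=x+n$ together with the fact that $e^{2\pi imn}=1$ for $n\in\Z$, and then recombine the shifted unit intervals into all of $\R$. Inserting these coefficients into the Fourier expansion of $g$ yields the first identity. The second formula is then obtained by specializing to $x=0$.

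The only real obstacle here is convergence and justification of the interchange of sum and integral, as well as the pointwise validity of the Fourier inversion for $g$. In the present setting the author explicitly instructs us to disregard these issues, and in the applications to modular forms the functions $f$ will decay rapidly enough (for example Schwartz) that everything converges absolutely and the manipulations are rigorous via dominated convergence; so I would simply flag this point rather than dwell on it.
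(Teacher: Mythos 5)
Your proposal is correct and follows exactly the route the paper indicates: average $f$ over $\Z$ to form the $1$-periodic function $g$, expand $g$ in a Fourier series, compute its coefficients by unfolding the integrals $\int_0^1 f(x+n)e^{-2\pi imx}\,dx$ over shifted unit intervals to obtain $\wh{f}(m)$, and set $x=0$ for the second identity. This is precisely the ``easy computation'' the paper alludes to (with convergence issues deliberately set aside), so there is nothing to add.
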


A typical application of this formula is to the ordinary Jacobi
\emph{theta function}: it is well-known (prove it otherwise) that the
function $e^{-\pi x^2}$ is invariant under Fourier transform. This
implies the following:

\begin{proposition} If $f(x)=e^{-a\pi x^2}$ for some $a>0$ then
$\wh{f}(x)=a^{-1/2}e^{-\pi x^2/a}$.
\end{proposition}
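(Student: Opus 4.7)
The plan is to reduce the stated formula to the self-duality of the Gaussian $g(x)=e^{-\pi x^2}$ already quoted just before the proposition, by exploiting the elementary scaling behaviour of the Fourier transform. Writing $f(x)=e^{-a\pi x^2}=g(\sqrt{a}\,x)$ realizes $f$ as a dilate of $g$, so the computation of $\wh{f}$ reduces to a change of variable in the defining integral.

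First I would record the scaling identity: for any $c>0$ and $h(x)=g(cx)$, the substitution $t\mapsto t/c$ in
$$\wh{h}(x)=\int_{-\infty}^{\infty}e^{-2\pi ixt}g(ct)\,dt$$
gives $\wh{h}(x)=c^{-1}\wh{g}(x/c)$. Next I would apply this to $c=\sqrt{a}$ and use the stated invariance $\wh{g}=g$ to obtain
$$\wh{f}(x)=a^{-1/2}\wh{g}(x/\sqrt{a})=a^{-1/2}g(x/\sqrt{a})=a^{-1/2}e^{-\pi x^2/a},$$
which is precisely the claim. No further analytic input is needed, since both $f$ and $\wh{f}$ are Schwartz functions and every integral in sight is absolutely convergent.

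The only genuine content hidden in this argument is the self-duality $\wh{g}=g$, which the excerpt asks the reader to prove on their own. Were one to give that step here, the most direct route would be to verify that $u(x):=\wh{g}(x)$ satisfies the same first-order linear ODE $u'(x)=-2\pi x\,u(x)$ as $g$ (obtained by differentiating under the integral sign and integrating by parts once), together with $u(0)=\int_{-\infty}^{\infty}e^{-\pi t^2}\,dt=1$. Alternatively one can complete the square in the exponent and shift the contour. Either way, this is the single nontrivial ingredient; granting it, the scaling manipulation above is purely formal.
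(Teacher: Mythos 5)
Your proof is correct and matches the paper's argument: the paper likewise takes the self-duality of $e^{-\pi x^2}$ as given and deduces the proposition by a simple change of variable in the defining integral. Your extra remarks on how one would prove $\wh{g}=g$ are fine but not needed for this step.
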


\begin{proof} Simple change of variable in the integral.\fp\end{proof}

\begin{corollary}\label{corth} Define
$$T(a)=\sum_{n\in\Z}e^{-a\pi n^2}\;.$$
We have the functional equation
$$T(1/a)=a^{1/2}T(a)\;.$$
\end{corollary}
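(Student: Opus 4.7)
The plan is to specialize Poisson summation to the Gaussian $f(x) = e^{-a\pi x^2}$ at $x=0$, using the Fourier transform formula from the preceding proposition.

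First I would compute both sides of the Poisson summation identity
$$\sum_{n\in\Z}f(n)=\sum_{m\in\Z}\wh{f}(m)$$
for this specific choice of $f$. The left-hand side is, by definition, exactly $T(a)$. For the right-hand side, the preceding proposition tells us $\wh{f}(x) = a^{-1/2}e^{-\pi x^2/a}$, so $\sum_{m\in\Z}\wh{f}(m) = a^{-1/2}\sum_{m\in\Z}e^{-\pi m^2/a} = a^{-1/2}T(1/a)$.

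Equating the two sides yields $T(a) = a^{-1/2}T(1/a)$, which rearranges to the stated functional equation $T(1/a) = a^{1/2}T(a)$.

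There is essentially no obstacle: the only nontrivial inputs are the Gaussian Fourier transform identity (already established) and Poisson summation (already stated). The one technical point one ought to acknowledge is the convergence hypothesis required to apply Poisson summation — but since $e^{-a\pi x^2}$ and all its derivatives decay rapidly for $a>0$, $f$ lies in the Schwartz class and Poisson summation applies rigorously. So the proof reduces to these two lines of substitution.
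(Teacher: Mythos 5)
Your proof is correct and is exactly the argument the paper intends: apply Poisson summation in the form $\sum_{n\in\Z}f(n)=\sum_{m\in\Z}\wh{f}(m)$ to $f(x)=e^{-a\pi x^2}$ and use the computed Fourier transform $\wh{f}(x)=a^{-1/2}e^{-\pi x^2/a}$. The paper's proof ("immediate from the proposition and Poisson summation") is precisely this two-line substitution, and your remark about Schwartz-class decay justifying the summation formula is a welcome extra.
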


\begin{proof} Immediate from the proposition and Poisson summation.\fp\end{proof}

This is historically the first example of modularity, which we will see in
more detail below.

\begin{exercise} Set $S=\sum_{n\ge1}e^{-(n/10)^2}$.
\begin{enumerate}\item Compute numerically $S$
to $100$ decimal digits, and show that it is apparently equal to
$5\sqrt{\pi}-1/2$.
\item Show that in fact $S$ is not exactly equal to $5\sqrt{\pi}-1/2$,
  and using the above corollary give a precise estimate for the difference.
\end{enumerate}
\end{exercise}

\begin{exercise}\begin{enumerate}
\item Show that the function $f(x)=1/\cosh(\pi x)$ is also invariant under
  Fourier transform.
\item In a manner similar to the corollary, define
  $$T_2(a)=\sum_{n\in\Z}1/\cosh(\pi na)\;.$$
  Show that we have the functional equation
  $$T_2(1/a)=aT_2(a)\;.$$
\item Show that in fact $T_2(a)=T(a)^2$ (this may be more difficult).
\item Do the same exercise as the previous one by noticing
  that $S=\sum_{n\ge1}1/\cosh(n/10)$ is very close to $5\pi-1/2$.
\end{enumerate}
\end{exercise}

Above we have mainly considered Fourier series of functions defined on $\R$.
We now consider more generally functions $f$ defined on $\C$ or a subset of
$\C$. We again assume that $f(z+1)=f(z)$, i.e., that $f$ is periodic of
period $1$. Thus (modulo regularity) $f$ has a Fourier series, but the
Fourier coefficients $a(n)$ now depend on $y=\Im(z)$:
$$f(x+iy)=\sum_{n\in\Z}a(n;y)e^{2\pi i nx}\text{\quad with\quad}a(n;y)=\int_0^1f(x+iy)e^{-2\pi inx}\,dx\;.$$
If we impose no extra condition on $f$, the \emph{functions} $a(n;y)$ are
quite arbitrary. But in almost all of our applications $f$ will be
\emph{holomorphic}; this means that
$\partial(f)(z)/\partial{\ov{z}}=0$, or equivalently that
$(\partial/\partial(x)+i\partial/\partial(y))(f)=0$. Replacing in the
Fourier expansion (recall that we do not worry about convergence issues)
gives
$$\sum_{n\in\Z}(2\pi ina(n;y)+ia'(n;y))e^{2\pi inx}=0\;,$$
hence by uniqueness of the expansion we obtain the differential equation
$a'(n;y)=-2\pi na(n;y)$, so that $a(n;y)=c(n)e^{-2\pi ny}$ for some constant
$c(n)$. This allows us to write cleanly the Fourier expansion of a
holomorphic function in the form
$$f(z)=\sum_{n\in\Z}c(n)e^{2\pi inz}\;.$$

Note that if the function is only \emph{meromorphic}, the region of convergence
will be limited by the closest pole. Consider for instance the function
$f(z)=1/(e^{2\pi iz}-1)=e^{\pi iz}/(2i\sin(\pi z))$. If we set $y=\Im(z)$
we have $|e^{2\pi iz}|=e^{-2\pi y}$, so if $y>0$ we have the Fourier expansion
$f(z)=-\sum_{n\ge0}e^{2\pi inz}$, while if $y<0$ we have the different
Fourier expansion $f(z)=\sum_{n\le-1}e^{2\pi inz}$.

\section{Elliptic Functions}

The preceding section was devoted to periodic functions. We now assume that
our functions are defined on some subset of $\C$ and assume that they
are \emph{doubly periodic}: this can be stated either by saying that there
exist two $\R$-linearly independent complex numbers $\om_1$ and $\om_2$ such
that $f(z+\om_i)=f(z)$ for all $z$ and $i=1,2$, or equivalently by saying that
there exists a \emph{lattice} $\Lambda$ in $\C$ (here $\Z\om_1+\Z\om_2$) such
that for any $\la\in\Lambda$ we have $f(z+\la)=f(z)$.

Note in passing that if $\om_1/\om_2\in\Q$ this is equivalent to (single)
periodicity, and if $\om_1/\om_2\in\R\setminus\Q$ the set of periods would be
dense so the only ``doubly periodic'' (at least continuous) functions would
essentially reduce to functions of one variable. For a similar reason there
do not exist nonconstant continuous functions which are triply periodic.

In the case of simply periodic functions considered above there already
existed some natural functions such as $e^{2\pi inx}$. In the doubly-periodic
case no such function exists (at least on an elementary level), so we have
to construct them, and for this we use the standard averaging procedure seen
and used above. Here the group is the lattice $\Lambda$, so we consider
functions of the type $f(z)=\sum_{\om\in\Lambda}\phi(z+\om)$.
For this to converge $\phi(z)$ must tend to $0$ sufficiently fast as $|z|$
tends to infinity, and since this is a double sum ($\Lambda$ is a
two-dimensional lattice), it is easy to see by comparison with an integral
(assuming $|\phi(z)|$ is regularly decreasing) that $|\phi(z)|$ should
decrease at least like $1/|z|^{\al}$ for $\al>2$.
Thus a first reasonable definition is to set
$$f(z)=\sum_{\om\in\Lambda}\dfrac{1}{(z+\om)^3}=\sum_{(m,n)\in\Z^2}\dfrac{1}{(z+m\om_1+n\om_2)^3}\;.$$
This will indeed be a doubly periodic function, and by normal convergence it
is immediate to see that it is a meromorphic function on $\C$ having only
poles for $z\in\Lambda$, so this is our first example of an
\emph{elliptic function}, which is by definition a doubly periodic function
which is meromorphic on $\C$. Note for future reference that since
$-\Lambda=\Lambda$ this specific function $f$ is odd: $f(-z)=-f(z)$.

However, this is not quite the basic elliptic function that we need. We
can integrate term by term, as long as we choose constants of integration
such that the integrated series continues to converge. To avoid stupid
multiplicative constants, we integrate $-2f(z)$: all antiderivatives of
$-2/(z+\om)^3$ are of the form $1/(z+\om)^2+C(\om)$ for some constant
$C(\om)$, hence to preserve convergence we will choose $C(0)=0$ and
$C(\om)=-1/\om^2$ for $\om\ne0$: indeed, $|1/(z+\om)^2-1/\om^2|$ is
asymptotic to $2|z|/|\om^3|$ as $|\om|\to\infty$, so we are again in
the domain of normal convergence. We will thus define:

$$\wp(z)=\dfrac{1}{z^2}+\sum_{\om\in\Lambda\setminus\{0\}}\left(\dfrac{1}{(z+\om)^2}-\dfrac{1}{\om^2}\right)\;,$$
the \emph{Weierstrass $\wp$-function}.

By construction $\wp'(z)=-2f(z)$, where $f$ is the function constructed above,
so $\wp'(z+\om)=\wp'(z)$ for any $\om\in \Lambda$, hence
$\wp(z+\om)=\wp(z)+D(\om)$ for some constant $D(\om)$ depending on $\om$ but
not on $z$. Note a slightly subtle point here: we use the fact that
$\C\setminus\Lambda$ is \emph{connected}. Do you see why?

Now as before it is clear that $\wp(z)$ is an even function: thus, setting
$z=-\om/2$ we have $\wp(\om/2)=\wp(-\om/2)+D(\om)=\wp(\om/2)+D(\om)$,
so $D(\om)=0$ hence $\wp(z+\om)=\wp(z)$ and $\wp$ is indeed an elliptic
function. There is a mistake in this reasoning: do you see it?

Since $\wp$ has poles on $\Lambda$, we cannot reason as we do when
$\om/2\in\Lambda$. Fortunately, this does not matter: since
$\om_i/2\notin\Lambda$ for $i=1,2$, we have shown at least that $D(\om_i)=0$
hence that $\wp(z+\om_i)=\wp(z)$ for $i=1,2$, so $\wp$ is doubly periodic
(so indeed $D(\om)=0$ for \emph{all} $\om\in\Lambda$).

The theory of elliptic functions is incredibly rich, and whole treatises
have been written about them. Since this course is mainly about modular
forms, we will simply summarize the main properties, and emphasize those
that are relevant to us. All are proved using manipulation of power
series and complex analysis, and all the proofs are quite straightforward.
For instance:

\begin{proposition}\label{propellf} Let $f$ be a nonzero elliptic function
  with period lattice $\Lambda$ as above, and denote by $P=P_a$ a
  ``fundamental parallelogram''
  $P_a=\{z=a+x\om_1+y\om_2,\ 0\le x<1,\ 0\le y<1\}$, where $a$ is chosen
  so that the boundary of $P_a$ does not contain any zeros or poles of $f$
  (see Figure 1).
  \begin{enumerate}\item The number of zeros of $f$ in $P$ is equal to the
    number of poles (counted with multiplicity), and this number is called
    the \emph{order} of $f$.
  \item The sum of the residues of $f$ at the poles in $P$ is equal to $0$.
  \item The sum of the zeros and poles of $f$ in $P$ belongs to $\Lambda$.
  \item If $f$ is nonconstant its order is at least $2$.
\end{enumerate}\end{proposition}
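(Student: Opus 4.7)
The plan is to obtain all four statements by integrating appropriate meromorphic functions around the boundary $\partial P$ of the fundamental parallelogram, exploiting the fact that $f$ (hence $f'/f$) is $\Lambda$-periodic so that contributions from opposite edges cancel. Throughout, $a$ is chosen so no zero or pole of $f$ lies on $\partial P$, which is possible because zeros and poles are isolated.

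For (1), I would apply the argument principle to $f'/f$: the contour integral $\frac{1}{2\pi i}\oint_{\partial P}\frac{f'(z)}{f(z)}\,dz$ equals $Z-N$, where $Z$ and $N$ are the numbers of zeros and poles in $P$ counted with multiplicity. Since $f(z+\om_i)=f(z)$ implies $f'(z+\om_i)/f(z+\om_i)=f'(z)/f(z)$, the integrals along the two horizontal edges cancel, and likewise for the two vertical edges; the total integral vanishes, giving $Z=N$. For (2), the same cancellation applied to $f$ itself shows $\oint_{\partial P}f(z)\,dz=0$, and the residue theorem identifies this integral with $2\pi i$ times the sum of residues of $f$ inside $P$.

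For (3), I would integrate $zf'(z)/f(z)$, whose residue at a zero of order $k$ at $z_0$ is $kz_0$ and at a pole of order $k$ is $-kz_0$, so $\frac{1}{2\pi i}\oint_{\partial P}zf'(z)/f(z)\,dz$ equals the sum of zeros minus the sum of poles (with multiplicity). This integrand is no longer periodic, but for each pair of opposite edges the difference reduces, after a change of variables and the periodicity of $f'/f$, to $\pm\om_i$ times an integral of $f'/f$ along a single segment whose endpoints satisfy $f(\text{end})=f(\text{start})$. Such an integral divided by $2\pi i$ is an integer, since it measures the winding number of $f$ around $0$ along a closed loop in $\C^*$. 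Hence the total is an integer combination of $\om_1$ and $\om_2$, i.e.\ an element of $\Lambda$, and this is equivalent modulo $\Lambda$ to the sum of the zeros and poles.

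For (4), I would argue by contradiction: if the order is $0$ then $f$ has no poles, hence is entire; by $\Lambda$-periodicity it is bounded (being bounded on the compact closure of $P$), so by Liouville it is constant, contradicting nonconstancy. If the order is $1$ then $f$ has a single simple pole in $P$ whose residue is nonzero, contradicting (2). The only genuinely subtle point is the winding-number argument in (3), since one must justify that the horizontal and vertical boundary integrals of $f'/f$ are each $2\pi i$ times an integer and that the algebra of ``sum of residues of $zf'/f$'' correctly records the sum of locations of zeros and poles; everything else is a routine contour computation.
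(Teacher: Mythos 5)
Your proposal is correct and follows essentially the same route as the paper: integrating $f$, $f'/f$, and $zf'/f$ around $\partial P$ and using periodicity to cancel (or, for $zf'/f$, to reduce to winding-number integrals giving elements of $\Lambda$), then deducing (4) from (2) together with Liouville's theorem. You simply supply the details (edge cancellation, the integrality of the boundary integrals of $f'/f$) that the paper's sketch leaves implicit.
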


\begin{proof} For (1), (2), and (3), simply integrate $f(z)$, $f'(z)/f(z)$,
and $zf'(z)/f(z)$ along the boundary of $P$ and use the residue theorem.
For (4), we first note that by (2) $f$ cannot have order $1$ since it would
have a simple pole with residue $0$. But it also cannot have order $0$: this
would mean that $f$ has no pole, so it is an entire function, and since
it is doubly-periodic its values are those taken in $P$ which is compact,
so $f$ is \emph{bounded}. By a famous theorem of Liouville (of which this is
the no less most famous application) it implies that $f$ is constant,
contradicting the assumption of (4).\fp\end{proof}

\begin{figure}
\tikzset{->-/.style={decoration={
  markings,
  mark=at position .5 with {\arrow{latex}}},postaction={decorate}}}
\begin{tikzpicture}
    \coordinate (Origin)   at (0,0);
    \coordinate (XAxisMin) at (-3,0);
    \coordinate (XAxisMax) at (7,0);
    \coordinate (YAxisMin) at (0,-2);
    \coordinate (YAxisMax) at (0,7);
    \coordinate (A)   at (1,0);
   \draw [thin, gray,-latex] (XAxisMin) -- (XAxisMax);
    \draw [thin, gray,-latex] (YAxisMin) -- (YAxisMax);

    \clip (-3,-2) rectangle (7cm,7cm); 
    \pgftransformcm{1}{0.2}{0.5}{1}{\pgfpoint{0cm}{0cm}}
    \coordinate (Bone) at (0,3);
    \coordinate (Btwo) at (3,0);
    \draw[style=help lines,dashed] (-10,-10) grid[step=3cm] (10,10);
    \foreach \x in {-5,-4,...,5}{
      \foreach \y in {-5,-4,...,5}{
        \node[draw,circle,inner sep=1pt,gray,fill=gray] at (3*\x,3*\y) {};
      }
    }
	\node at ($0.5*(Bone)+(Btwo)+(A)$) [right] {$C_a$};
    \node[draw,circle,inner sep=1pt,black,fill] at ($(Bone)+(A)$) {};
    \node[draw,circle,inner sep=1pt,black,fill] at ($(Btwo)+(A)$) {};
    \node[draw,circle,inner sep=1pt,black,fill] at ($(Bone)+(Btwo)+(A)$) {};
    \node[draw,circle,inner sep=1pt,black,fill] at (A) {};

    \draw [ultra thick,-,black] ($(Bone)+(A)$) -- (A) node [below left] {$a$};
    \draw [ultra thick,-,black] (A)--  ($(Btwo)+(A)$) node [below right] {$\omega_2+a$};
    \draw [ultra thick,-,black]  ($(Bone)+(Btwo)+(A)$) -- ($(Bone)+(A)$) node [above] {$\omega_1+a$};
    \draw [ultra thick,-,black] ($(Btwo)+(A)$)--  ($(Btwo)+(Bone)+(A)$) node [below right] {$\omega_1+\omega_2+a$};

	 \draw (Bone) node [above left] {$\omega_1$};
	 \draw (Btwo) node [above left] {$\omega_2$};
   \end{tikzpicture}
\caption{Fundamental Parallelogram $P_a$}
\label{fig:lattice_contour}
\end{figure}
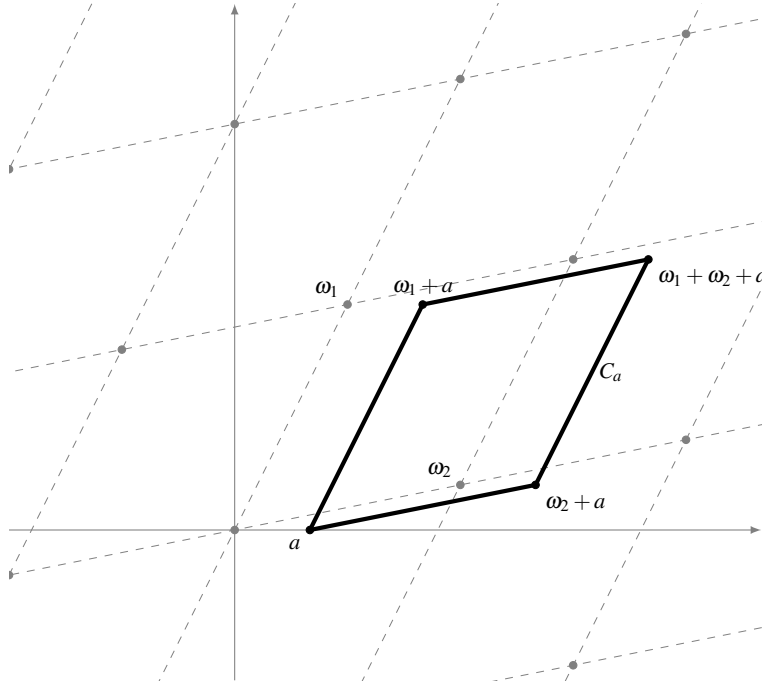

Note that clearly $\wp$ has order $2$, and the last result shows that we
cannot find an elliptic function of order $1$. Note however the following:

\begin{exercise}\begin{enumerate}
\item By integrating term by term the series defining $-\wp(z)$ show that
  if we define the \emph{Weierstrass zeta function}
  $$\z(z)=\dfrac{1}{z}+\sum_{\om\in\Lambda\setminus\{0\}}\left(\dfrac{1}{z+\om}-\dfrac{1}{\om}+\dfrac{z}{\om^2}\right)\;,$$
  this series converges normally on any compact subset of $\C\setminus\Lambda$
  and satisfies $\z'(z)=-\wp(z)$.
\item Deduce that there exist constants $\eta_1$ and $\eta_2$ such that
  $\z(z+\om_1)=\z(z)+\eta_1$ and $\z(z+\om_2)=\z(z)+\eta_2$, so that
  if $\om=m\om_1+n\om_2$ we have $\z(z+\om)=\z(z)+m\eta_1+n\eta_2$.
  Thus $\z$ (which would be of order $1$) is not doubly-periodic but only
  quasi-doubly periodic: this is called a \emph{quasi-elliptic function}.
\item By integrating around the usual fundamental parallelogram, show the
  important relation due to Legendre:
  $$\om_1\eta_2-\om_2\eta_1=\pm2\pi i\;,$$
  the sign depending on the ordering of $\om_1$ and $\om_2$.
  \end{enumerate}
\end{exercise}

The main properties of $\wp$ that we want to mention are as follows:
First, for $z$ sufficiently small and $\om\ne0$ we can expand
$$\dfrac{1}{(z+\om)^2}=\sum_{k\ge0}(-1)^k(k+1)z^k\dfrac{1}{\om^{k+2}}\;,$$
so
$$\wp(z)=\dfrac{1}{z^2}+\sum_{k\ge1}(-1)^k(k+1)z^kG_{k+2}(\Lambda)\;,$$
where we have set
$$G_k(\Lambda)=\sum_{\om\in\Lambda\setminus\{0\}}\dfrac{1}{\om^k}\;,$$
which are called \emph{Eisenstein series} of weight $k$. Since $\Lambda$
is symmetrical, it is clear that $G_k=0$ if $k$ is odd, so the expansion
of $\wp(z)$ around $z=0$ is given by
$$\wp(z)=\dfrac{1}{z^2}+\sum_{k\ge1}(2k+1)z^{2k}G_{2k+2}(\Lambda)\;.$$

Second, one can show that \emph{all} elliptic functions are simply rational
functions in $\wp(z)$ and $\wp'(z)$, so we need not look any further in our
construction.

Third, and this is probably one of the most important properties of $\wp(z)$,
it satisfies a \emph{differential equation} of order $1$: the proof is
as follows. Using the above Taylor expansion of $\wp(z)$, it is immediate
to check that
$$F(z)={\wp'(z)}^2-(4\wp(z)^3-g_2(\Lambda)\wp(z)-g_3(\Lambda))$$
has an expansion around $z=0$ beginning with $F(z)=c_1z+\cdots$,
where we have set $g_2(\Lambda)=60G_4(\Lambda)$ and
$g_3(\Lambda)=140G_6(\Lambda)$. In addition, $F$ is evidently an elliptic
function, and since it has no pole at $z=0$ it has no poles on $\Lambda$ hence
no poles at all, so it has order $0$. Thus by Proposition \ref{propellf} (4)
$f$ is constant, and since by construction it vanishes
at $0$ it is identically $0$. Thus $\wp$ satisfies the differential equation
$${\wp'(z)}^2=4\wp(z)^3-g_2(\Lambda)\wp(z)-g_3(\Lambda)\;.$$

A fourth and somewhat surprising property of the function $\wp(z)$ is
connected to the theory of \emph{elliptic curves}: the above differential
equation shows that $(\wp(z),\wp'(z))$ parametrizes the cubic curve
$y^2=4x^3-g_2x-g_3$, which is the general equation of an elliptic curve
(you do not need to know the theory of elliptic curves for what follows).
Thus, if $z_1$ and $z_2$ are in $\C\setminus\Lambda$, the two points
$P_i=(\wp(z_i),\wp'(z_i))$ for $i=1$, $2$ are on the curve, hence if we
draw the line through these two points (the tangent to the curve if they are
equal), it is immediate to see from Proposition \ref{propellf} (3) that the
third point of intersection corresponds to the
parameter $-(z_1+z_2)$, and can of course be computed as a rational
function of the coordinates of $P_1$ and $P_2$. It follows that
$\wp(z)$ (and $\wp'(z)$) possesses an \emph{addition formula}
expressing $\wp(z_1+z_2)$ in terms of the $\wp(z_i)$ and $\wp'(z_i)$.

\begin{exercise} Find this addition formula. You will have to distinguish
the cases $z_1=z_2$, $z_1=-z_2$, and $z_1\ne\pm z_2$.\end{exercise}

An interesting corollary of the differential equation for $\wp(z)$, which we
will prove in a different way below, is a \emph{recursion} for the
Eisenstein series $G_{2k}(\Lambda)$:

\begin{proposition} We have the recursion for $k\ge4$:
$$(k-3)(2k-1)(2k+1)G_{2k}=3\sum_{2\le j\le k-2}(2j-1)(2(k-j)-1)G_{2j}G_{2(k-j)}\;.$$
\end{proposition}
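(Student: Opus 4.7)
The plan is to combine the Weierstrass differential equation $\wp'(z)^2=4\wp(z)^3-g_2(\L)\wp(z)-g_3(\L)$ with the Laurent expansion
$$\wp(z)=z^{-2}+\sum_{k\ge 1}(2k+1)G_{2k+2}z^{2k}$$
and read off an identity among the Taylor coefficients. Differentiating the differential equation and cancelling the common factor $2\wp'(z)$ (legitimate since $\wp'$ is a unit in the field of Laurent series at $z=0$) produces the cleaner second-order identity
$$\wp''(z)=6\wp(z)^2-\tfrac{1}{2}g_2(\L)\;,$$
which is the form I will compare coefficient by coefficient.

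First, term-by-term differentiation shows that the coefficient of $z^{2m}$ in $\wp''(z)$ equals $(2m+1)(2m+2)(2m+3)G_{2m+4}$. Squaring the Laurent expansion and collecting, the coefficient of $z^{2m}$ in $\wp(z)^2$ consists of a cross term $2(2m+3)G_{2m+4}$ coming from $2z^{-2}$ against the $z^{2m+2}$ term of the regular part, together with the Cauchy convolution
$$\sum_{\ell=1}^{m-1}(2\ell+1)\bigl(2(m-\ell)+1\bigr)G_{2\ell+2}G_{2(m-\ell)+2}\;.$$
For $m\ge 2$ the constant $\tfrac{1}{2}g_2(\L)$ does not contribute to the $z^{2m}$ coefficient, so matching coefficients yields
$$\bigl[(2m+1)(2m+2)(2m+3)-12(2m+3)\bigr]G_{2m+4}=6\sum_{\ell=1}^{m-1}(2\ell+1)\bigl(2(m-\ell)+1\bigr)G_{2\ell+2}G_{2(m-\ell)+2}\;.$$

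The remaining step is purely algebraic. Factoring $(2m+3)$ out of the bracket leaves $(2m+1)(2m+2)-12=4m^2+6m-10=2(m-1)(2m+5)$, so the left-hand side becomes $2(m-1)(2m+3)(2m+5)G_{2m+4}$. Dividing by $2$ and reindexing by $k=m+2$ (giving $m-1=k-3$, $2m+3=2k-1$, $2m+5=2k+1$, $G_{2m+4}=G_{2k}$) together with $j=\ell+1$ in the sum converts the identity into exactly the claimed recursion for $k\ge 4$. The only real obstacle is this factorization: one needs the quadratic $(2m+1)(2m+2)-12$ to split into a product containing both $(m-1)$ and $(2m+5)$ so that the three shifted-index factors $(k-3)(2k-1)(2k+1)$ emerge cleanly after reindexing.
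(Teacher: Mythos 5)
Your proof is correct and follows essentially the same route as the paper: differentiate the Weierstrass equation, cancel $2\wp'$ to obtain $\wp''=6\wp^2-\tfrac12 g_2$, and compare Laurent coefficients at $z=0$. The only cosmetic difference is that the paper absorbs the $z^{-2}$ cross-term by the conventions $G_0=-1$, $G_2=0$ in a single unified expansion, whereas you treat that term explicitly and then reindex; the underlying computation is identical.
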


\begin{proof} Taking the derivative of the differential equation and dividing by
$2\wp'$ we obtain $\wp''(z)=6\wp(z)^2-g_2(\Lambda)/2$. If we set by
convention $G_0(\Lambda)=-1$ and $G_2(\Lambda)=0$, and for notational
simplicity omit $\Lambda$ which is fixed, we have
$\wp(z)=\sum_{k\ge-1}(2k+1)z^{2k}G_{2k+2}$, so on the one hand
$$\wp''(z)=\sum_{k\ge-1}(2k+1)(2k)(2k-1)z^{2k-2}G_{2k+2}\;,$$
and on the other hand $\wp(z)^2=\sum_{K\ge-2}a(K)z^{2K}$ with
$$a(K)=\sum_{k_1+k_2=K}(2k_1+1)(2k_2+1)G_{2k_1+2}G_{2k_2+2}\;.$$
Replacing in the differential equation it is immediate to check that the
coefficients agree up to $z^2$, and for $K\ge2$ we have the identification
$$6\sum_{\substack{k_1+k_2=K\\k_i\ge-1}}(2k_1+1)(2k_2+1)G_{2k_1+2}G_{2k_2+2}=(2K+3)(2K+2)(2K+1)G_{2K+4}$$
which is easily seen to be equivalent to the recursion of the proposition
using $G_0=-1$ and $G_2=0$.\fp\end{proof}

For instance
$$G_8=\dfrac{3}{7}G_4^2\,\quad G_{10}=\dfrac{5}{11}G_4G_6\,\quad G_{12}=\dfrac{18G_4^3+25G_6^2}{143}\;,$$
and more generally this implies that $G_{2k}$ is a \emph{polynomial} in
$G_4$ and $G_6$ with rational coefficients which are \emph{independent} of
the lattice $\Lambda$.

As other corollary, we note that if we choose $\om_2=1$ and $\om_1=iT$ with
$T$ tending to $+\infty$, then the definition
$G_{2k}(\Lambda)=\sum_{(m,n)\in\Z^2\setminus\{(0,0)\}}(m\om_1+n\om_2)^{-2k}$ implies
that $G_{2k}(\Lambda)$ will tend to $\sum_{n\in\Z\setminus\{0\}}n^{-2k}=2\z(2k)$,
where $\z$ is the Riemann zeta function. If follows that for all $k\ge2$,
$\z(2k)$ is a polynomial in $\z(4)$ and $\z(6)$ with rational coefficients.
Of course this is a weak but nontrivial result, since we know that $\z(2k)$
is a rational multiple of $\pi^{2k}$.

\smallskip

To finish this section on elliptic functions and make the transition to modular
forms, we write explicitly $\Lambda=\Lambda(\om_1,\om_2)$ and by abuse of
notation $G_{2k}(\om_1,\om_2):=G_{2k}(\Lambda(\om_1,\om_2))$, and we consider
the dependence of $G_{2k}$ on $\om_1$ and $\om_2$. We note two
evident facts: first, $G_{2k}(\om_1,\om_2)$ is \emph{homogeneous} of
degree $-2k$: for any nonzero complex number $\la$ we have
$G_{2k}(\la\om_1,\la\om_2)=\la^{-2k}G_{2k}(\om_1,\om_2)$. In particular,
$G_{2k}(\om_1,\om_2)=\om_2^{-2k}G_{2k}(\om_1/\om_2,1)$.
Second, a general $\Z$-basis of $\Lambda$ is given by
$(\om'_1,\om'_2)=(a\om_1+b\om_2,c\om_1+d\om_2)$ with $a$, $b$, $c$, $d$
integers such that $ad-bc=\pm1$. If we choose an \emph{oriented} basis
such that $\Im(\om_1/\om_2)>0$ we in fact have $ad-bc=1$.

Thus, $G_{2k}(a\om_1+b\om_2,c\om_1+d\om_2)=G_{2k}(\om_1,\om_2)$, and using
homogeneity this can be written
$$(c\om_1+d\om_2)^{-2k}G_{2k}\left(\dfrac{a\om_1+b\om_2}{c\om_1+d\om_2},1\right)=\om_2^{-2k}G_{2k}\left(\dfrac{\om_1}{\om_2},1\right)\;.$$
Thus, if we set $\tau=\om_1/\om_2$ and by an additional abuse of notation
abbreviate $G_{2k}(\tau,1)$ to $G_{2k}(\tau)$, we have by definition
$$G_{2k}(\tau)=\sum_{(m,n)\in\Z^2\setminus\{(0,0)\}}(m\tau+n)^{-2k}\;,$$
and we have shown the following \emph{modularity} property:

\begin{proposition} For any $\psmm{a}{b}{c}{d}\in\SL_2(\Z)$, the group of
  $2\times2$ integer matrices of determinant $1$, and any $\tau\in\C$ with
  $\Im(\tau)>0$ we have
  $$G_{2k}\left(\dfrac{a\tau+b}{c\tau+d}\right)=(c\tau+d)^{2k}G_{2k}(\tau)\;.$$
\end{proposition}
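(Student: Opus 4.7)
The plan is to obtain this as a direct corollary of the two facts established in the paragraph immediately preceding the statement: the lattice invariance of $G_{2k}$ under any $\Z$-basis change, and its homogeneity of degree $-2k$. No analytic input is needed beyond recognizing that both sides are really the same lattice sum in disguise.

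First, I would observe that for $\tau$ in the upper half plane the pair $(\tau,1)$ is an oriented $\Z$-basis of the lattice $\Lambda(\tau,1)$, so $G_{2k}(\tau)=G_{2k}(\tau,1)$ by definition. Given $\psmm{a}{b}{c}{d}\in\SL_2(\Z)$, the pair $(\om'_1,\om'_2)=(a\tau+b,\,c\tau+d)$ is another $\Z$-basis of the same lattice, because the change-of-basis matrix is invertible over $\Z$. Hence, directly from the summation definition,
$$G_{2k}(a\tau+b,\,c\tau+d)=G_{2k}(\tau,1)=G_{2k}(\tau).$$

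Next, I would apply homogeneity with scalar $\la=c\tau+d$; this scalar is nonzero since $c\tau+d=0$ with $\Im(\tau)>0$ would force $c=d=0$, contradicting $ad-bc=1$. Homogeneity then gives
$$G_{2k}(a\tau+b,\,c\tau+d)=(c\tau+d)^{-2k}G_{2k}\!\left(\dfrac{a\tau+b}{c\tau+d},\,1\right)=(c\tau+d)^{-2k}G_{2k}\!\left(\dfrac{a\tau+b}{c\tau+d}\right).$$
Equating the two expressions for $G_{2k}(a\tau+b,\,c\tau+d)$ and multiplying through by $(c\tau+d)^{2k}$ yields the claimed functional equation.

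There is essentially no obstacle; the only subtlety worth flagging is that the single-variable function $G_{2k}(\cdot)$ is only defined on the upper half plane, so one should check that $(a\tau+b)/(c\tau+d)\in\H$ whenever $\tau\in\H$. This is the standard one-line computation $\Im((a\tau+b)/(c\tau+d))=\Im(\tau)/|c\tau+d|^2$, using $ad-bc=1$, which I would insert in passing. The main conceptual point I would emphasize in the write-up is that the modularity of $G_{2k}$ is not a new miracle but simply the combination of ``the lattice doesn't care which basis you use'' with the scaling law, a viewpoint that also explains the appearance of the automorphy factor $(c\tau+d)^{2k}$.
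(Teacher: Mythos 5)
Your argument is correct and is exactly the paper's route: the proposition there is deduced from the immediately preceding paragraph by combining basis-invariance of the lattice sum $G_{2k}(\Lambda)$ with homogeneity of degree $-2k$, just as you do (with $\om_1=\tau$, $\om_2=1$). Your added checks that $c\tau+d\ne0$ and that $\Im(\ga(\tau))=\Im(\tau)/|c\tau+d|^2>0$ are harmless refinements of the same proof.
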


This will be our basic definition of (weak) modularity.

\section{Modular Forms and Functions}

\subsection{Definitions}

Let us introduce some notation:

$\bullet$ We denote by $\G$ the \emph{modular group $\SL_2(\Z)$}. Note that
properly speaking the modular group should be the group of transformations
$\tau\mapsto(a\tau+b)/(c\tau+d)$, which is isomorphic to the quotient of
$\SL_2(\Z)$ by the equivalence relation saying that $M$ and $-M$ are
equivalent, but for this course we will stick to this definition. If
$\ga=\psmm{a}{b}{c}{d}$ we will of course write $\ga(\tau)$ for
$(a\tau+b)/(c\tau+d)$.

$\bullet$ The \emph{Poincar\'e upper half-plane} $\H$ is the set of complex
numbers $\tau$ such that $\Im(\tau)>0$. Since for $\ga=\psmm{a}{b}{c}{d}\in\G$
we have $\Im(\ga(\tau))=\Im(\tau)/|c\tau+d|^2$, we see that $\G$ is a group of
transformations of $\H$ (more generally so is $\SL_2(\R)$, there is nothing
special about $\Z$).

$\bullet$ The completed upper half-plane $\ov{\H}$ is by definition
$\ov{\H}=\H\cup\P_1(\Q)=\H\cup\Q\cup\{i\infty\}$. Note that this is
\emph{not} the closure in the topological sense, since we do not
include any real irrational numbers.

\begin{definition} Let $k\in\Z$ and let $F$ be a function from $\H$ to $\C$.
  \begin{enumerate}\item We will say that
    $F$ is \emph{weakly modular} of weight $k$ for $\G$ if for all
    $\ga=\psmm{a}{b}{c}{d}\in\G$ and all $\tau\in\H$ we have
    $$F(\ga(\tau))=(c\tau+d)^kF(\tau)\;.$$
  \item We will say that $F$ is a modular \emph{form} if, in addition,
    $F$ is holomorphic on $\H$ and if $|F(\tau)|$ remains bounded as
    $\Im(\tau)\to\infty$.
  \item We will say that $F$ is a modular \emph{cusp form} if it is
    a modular form such that $F(\tau)$ tends to $0$ as $\Im(\tau)\to\infty$.
  \end{enumerate}
\end{definition}

We make a number of immediate but important remarks.

\begin{remarks}{\rm \begin{enumerate}
\item The Eisenstein series $G_{2k}(\tau)$ are basic examples of modular
  forms of weight $2k$, which are not cusp forms since $G_{2k}(\tau)$
  tends to $2\z(2k)\ne0$ when $\Im(\tau)\to\infty$.
\item With the present definition, it is clear that there are no nonzero
  modular forms of \emph{odd weight} $k$, since if $k$ is odd we have
  $(-c\tau-d)^k=-(c\tau+d)^k$ and $\ga(\tau)=(-\ga)(\tau)$. However, when
  considering modular forms defined on \emph{subgroups} of $\G$ there may
  be modular forms of odd weight, so we keep the above definition.
\item Applying modularity to $\ga=T=\psmm{1}{1}{0}{1}$ we see that
  $F(\tau+1)=F(\tau)$, hence $F$ has a Fourier series expansion, and if
  $F$ is holomorphic, by the remark made above in the section on Fourier
  series, we have an expansion $F(\tau)=\sum_{n\in\Z}a(n)e^{2\pi in\tau}$
  with $a(n)=e^{2\pi ny}\int_0^1F(x+iy)e^{-2\pi inx}\,dx$ for any $y>0$.
  Thus, if $|F(x+iy)|$ remains bounded as $y\to\infty$ it follows that
  as $y\to\infty$ we have $a(n)\le Be^{2\pi ny}$ for a suitable constant $B$,
  so we deduce that $a(n)=0$ whenever $n<0$ since $e^{2\pi ny}\to0$. Thus
  if $F$ is a modular \emph{form} we have
  $F(\tau)=\sum_{n\ge0}a(n)e^{2\pi in\tau}$, hence
  $\lim_{\Im(\tau)\to\infty}F(\tau)=a(0)$, so $F$ is a cusp form if
  and only if $a(0)=0$.
\end{enumerate}}
\end{remarks}

\begin{definition} We will denote by $M_k(\G)$ the vector space of modular
forms of weight $k$ on $\G$ ($M$ for Modular of course), and by $S_k(\G)$
the subspace of cusp forms ($S$ for the German Spitzenform, meaning exactly
cusp form).\end{definition}

Notation: for any matrix $\ga=\psmm{a}{b}{c}{d}$ with $ad-bc>0$, we will
define the weight $k$ \emph{slash operator} $F|_k\ga$ by
$$F|_k\ga(\tau)=(ad-bc)^{k/2}(c\tau+d)^{-k}F(\ga(\tau))\;.$$
The reason for the factor $(ad-bc)^{k/2}$ is that $\la\ga$ has the same
action on $\H$ as $\ga$, so this makes the formula homogeneous.
For instance, $F$ is weakly modular of weight $k$ if and only if
$F|_k\ga=F$ for all $\ga\in\G$.

We will also use the universal modular form convention of
writing $q$ for $e^{2\pi i\tau}$, so that a Fourier expansion is of the type
$F(\tau)=\sum_{n\ge0}a(n)q^n$. We use the additional convention that if
$\al$ is any complex number, $q^{\al}$ will mean $e^{2\pi i\tau\al}$.

\begin{exercise} Let $F(\tau)=\sum_{n\ge0}a(n)q^n\in M_k(\G)$, and let
$\ga=\psmm{A}{B}{C}{D}$ be a matrix in $M_2^+(\Z)$, i.e., $A$, $B$, $C$, and
$D$ are integers and $\Delta=\det(\ga)=AD-BC>0$. Set $g=\gcd(A,C)$, let $u$ and
$v$ be such that $uA+vC=g$, set $b=uB+vD$, and finally let
$\z_{\Delta}=e^{2\pi i/\Delta}$. Prove the matrix identity
$$\begin{pmatrix}A&B\\C&D\end{pmatrix}=\begin{pmatrix}A/g&-v\\C/g&u\end{pmatrix}\begin{pmatrix}g&b\\0&\Delta/g\end{pmatrix}\;,$$
and deduce that we have the more general Fourier expansion
$$F|_k\ga(\tau)=\dfrac{g^{k/2}}{\Delta^k}\sum_{n\ge0}\z_{\Delta}^{nbg}a(n)q^{g^2/\Delta}\;,$$
which is of course equal to $F$ if $\Delta=1$, since then $g=1$.
\end{exercise}

\subsection{Basic Results}

The first fundamental result in the theory of modular forms is that these
spaces are \emph{finite-dimensional}. The proof uses exactly the same
method that we have used to prove the basic results on elliptic functions.
We first note that there is a ``fundamental domain'' (which replaces the
fundamental parallelogram) for the action of $\G$ on $\H$, given by
$$\FF=\{\tau\in\H,\ -1/2\le\Re(\tau)<1/2,\ |\tau|\ge1\}\;.$$

\begin{figure}
\centering 
\begin{tikzpicture}[scale=2]
	\def\b{0.866025403784439};
    \tikzstyle{every node}=[font=\footnotesize]
	\coordinate (Origin)   at (0,0);
    \coordinate (XAxisMin) at (-1.5,0);
    \coordinate (XAxisMax) at (1.5,0);
    \coordinate (YAxisMin) at (0,0);
    \coordinate (YAxisMax) at (0,3);

    \coordinate (e1) at (0.5,\b);
    \coordinate (e2) at (-0.5,\b);
    \coordinate (f1) at (0.5,\b+3.5);
    \coordinate (f2) at (-0.5,\b+3.5);
    \clip (-1,-0.3) rectangle (2,2.5); 

    \draw (0.5,-0.05) -- (0.5,0.05);
    \draw (0.5,0) node [below] {$\frac{1}{2}$};
    \draw (-0.5,-0.05) -- (-0.5,0.05);
    \draw (-0.55,0) node [below] {$-\frac{1}{2}$};
    \draw [thin, gray,-latex] (XAxisMin) -- (XAxisMax);
    \draw [thin, gray,-latex] (YAxisMin) -- (YAxisMax);
	\draw (0,1.5) node [above right,font=\normalsize] {$\FF$};
    \shade [gray,bottom color=transparent!100, top color=transparent!5,fill opacity=0.02] 
       (f1) -- (e1) arc (60:120:1) -- (f2) -- cycle;
    \draw [black] 
       (f1) -- (e1) arc (60:120:1) -- (f2);
   \end{tikzpicture}
\caption{The fundamental domain, $\FF$, of $\G$}
\label{fig:fundamental_domain_gamma}
\end{figure}

The proof that this is a fundamental domain, in other words that any
$\tau\in\H$ has a unique image by $\G$ belonging to $\FF$ is not very difficult
and will be omitted. We then integrate $F'(z)/F(z)$ along the boundary of
$\FF$, and using modularity we obtain the following result:

\begin{theorem}\label{thmval} Let $F\in M_k(\G)$ be a nonzero modular form.
  For any
  $\tau_0\in\H$, denote by $v_{\tau_0}(F)$ the \emph{valuation} of $F$ at
  $\tau_0$, i.e., the unique integer $v$ such that $F(\tau)/(\tau-\tau_0)^v$
  is holomorphic and nonzero at $\tau_0$, and if $F(\tau)=G(e^{2\pi i\tau})$,
  define $v_{i\infty}(F)=v_0(G)$ (i.e., the number of first vanishing Fourier
  coefficients of $F$). We have the formula
  $$v_{i\infty}(F)+\sum_{\tau\in\FF}\dfrac{v_{\tau}(F)}{e_{\tau}}=\dfrac{k}{12}\;,$$
  where $e_i=2$, $e_{\rho}=3$, and $e_{\tau}=1$ otherwise ($\rho=e^{2\pi i/3}$).
\end{theorem}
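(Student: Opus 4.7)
The plan is to integrate the logarithmic derivative $F'(\tau)/F(\tau)$ along the boundary of a truncated fundamental domain $\FF_T:=\FF\cap\{\Im(\tau)\le T\}$, with $T$ large enough that all zeros of $F$ in $\FF$ lie below height $T$; such $T$ exists because the $q$-expansion $F(\tau)=\sum_{n\ge 0}a(n)q^n$ forces $F$ to be non-vanishing for $\Im(\tau)$ sufficiently large. Since $F$ has no poles, the argument principle equates $(2\pi i)^{-1}\oint F'(\tau)/F(\tau)\,d\tau$ with the number of interior zeros counted with multiplicity. The strategy is to indent $\partial\FF_T$ by small circular arcs of radius $\eps$ around the three boundary points $i$, $\rho$, and $\rho+1$ (which are in general zeros of $F$), and then to evaluate each resulting piece using one of the two generating modular identities $F(\tau+1)=F(\tau)$ and $F(-1/\tau)=\tau^k F(\tau)$.

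The top horizontal segment from $\tfrac12+iT$ to $-\tfrac12+iT$ becomes a clockwise loop about $q=0$ under $q=e^{2\pi i\tau}$, and the argument principle applied to the $q$-expansion yields the contribution $-v_{i\infty}(F)$. The left and right vertical sides, identified by $\tau\mapsto\tau+1$, are traversed in opposite directions and carry the same integrand (since $F(\tau+1)=F(\tau)$), so they cancel. For the bottom arc from $\rho$ to $\rho+1$ through $i$ (with the indentations removed), I split at $i$ and substitute $w=-1/\tau$ in the left half: the involution $S$ maps the left half to the right half with reversed orientation, and the logarithmic derivative of $F(-1/\tau)=\tau^k F(\tau)$ gives $(F'/F)(-1/\tau)\,d(-1/\tau)=(k/\tau+F'(\tau)/F(\tau))\,d\tau$, so that the two $F'/F$ integrals cancel and only $\int_{\rho+1}^{i}k\,dw/w=ki\pi/6$ survives, contributing $k/12$. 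The three indentations are traversed clockwise about their centers (since the $\FF$-side disks are what is removed), with angular extent equal to the interior angle of $\FF$ at that point: a semicircle of angle $\pi$ at $i$ contributes $-v_i(F)/2$, and arcs of angle $\pi/3$ at the two elliptic corners each contribute $-v_\rho(F)/6$, for a combined corner contribution of $-v_\rho(F)/3$ after using $v_{\rho+1}(F)=v_\rho(F)$.

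Summing and equating to the enclosed zero count gives
$$\sum_{\tau\in\FF^\circ}v_\tau(F)=\frac{k}{12}-v_{i\infty}(F)-\frac{v_i(F)}{2}-\frac{v_\rho(F)}{3}\;,$$
which rearranges to the claim once one notes that $\sum_{\tau\in\FF}v_\tau(F)/e_\tau=\sum_{\tau\in\FF^\circ}v_\tau(F)+v_i(F)/2+v_\rho(F)/3$ under the convention that $\FF$ includes $i$ and $\rho$ but only one representative from each $\G$-orbit on the rest of the boundary.

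The main obstacle is the orientation and angular-extent bookkeeping for the three small indentations, which is what produces the crucial $1/e_\tau$ weights in the final formula; a secondary subtlety, handled by exactly the same $T$- and $S$-symmetries used to cancel the vertical sides and to evaluate the arc, is that any additional zeros of $F$ on $\partial\FF$ occur in $\G$-equivalent pairs whose extra indentation contributions cancel.
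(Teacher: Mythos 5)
Your proof is correct and is essentially the same argument the paper has in mind (it only says ``integrate $F'(z)/F(z)$ along the boundary of $\FF$ and use modularity''): you integrate the logarithmic derivative around the truncated, indented fundamental domain and evaluate each piece via the $T$- and $S$-identities. All contributions are accounted for correctly --- $-v_{i\infty}(F)$ from the top segment, cancellation of the vertical sides, $k/12$ from the circular arc, and $-v_i(F)/2$, $-v_\rho(F)/3$ from the indentations of angles $\pi$ and $\pi/3$ --- as is the bookkeeping for possible extra zeros on the boundary.
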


This theorem has many important consequences but, as already noted, the most
important is that it implies that $M_k(\G)$ is finite dimensional. First,
it trivially implies that $k\ge0$, i.e., there are no modular \emph{forms}
of negative weight. In addition it easily implies the following:

\begin{corollary}\label{cordim} Let $k\ge0$ be an even integer. We have
  \begin{align*}\dim(M_k(\G))&=\begin{cases}
  \lfloor k/12\rfloor&\text{\quad if $k\equiv2\pmod{12}$\;,}\\
  \lfloor k/12\rfloor+1&\text{\quad if $k\not\equiv2\pmod{12}$\;,}
  \end{cases}\\
  \dim(S_k(\G))&=\begin{cases}
  0&\text{\quad if $k<12$\;,}\\
  \lfloor k/12\rfloor-1&\text{\quad if $k\ge12$, $k\equiv2\pmod{12}$\;,}\\
  \lfloor k/12\rfloor&\text{\quad if $k\ge12$, $k\not\equiv2\pmod{12}$\;.}
  \end{cases}\end{align*}
\end{corollary}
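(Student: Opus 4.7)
The strategy is to leverage Theorem \ref{thmval} (the valence formula) together with two structural inputs: the splitting $M_k(\G)=\C\cdot G_k\oplus S_k(\G)$ for $k\ge 4$ even, and a ``shift'' isomorphism $M_{k-12}(\G)\isom S_k(\G)$ given by multiplication by a nowhere-vanishing cusp form of weight $12$. Induction on $k$ then pins down the dimensions exactly.

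First I would handle the base cases $k\in\{0,2,4,6,8,10\}$ directly from the valence formula. For a nonzero $F\in M_k(\G)$, each term in
$$v_{i\infty}(F)+\tfrac12 v_i(F)+\tfrac13 v_\rho(F)+\sum_{\tau\ne i,\rho} v_\tau(F)=k/12$$
is a nonnegative integer-over-$6$. For $k=0$ this forces $F$ to be a nonzero constant, so $\dim M_0=1$; for $k=2$ no nonnegative combination of $1,\tfrac12,\tfrac13,\ldots$ equals $1/6$, so $\dim M_2=0$; for $k=4,6,8,10$ the only admissible solutions are $v_\rho=1$, $v_i=1$, $v_\rho=2$, $v_i=v_\rho=1$ respectively, which shows that any two nonzero modular forms of these weights are proportional. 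Since $G_k$ is a nonzero element of $M_k(\G)$ for each of these $k\ge 4$ (it has nonzero constant term $2\z(k)$), we get $\dim M_k=1$ for $k\in\{4,6,8,10\}$, and $\dim S_k=0$ for $k<12$.

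Next I would construct a nowhere-vanishing cusp form of weight $12$. Since $\dim M_{12}\le 2$ by the valence formula (solutions to $v_{i\infty}+v_i/2+v_\rho/3+\dotsb=1$ correspond to the possibilities enumerated), the forms $G_4^3$ and $G_6^2$ of weight $12$ are not both proportional to the same thing, and taking $\Delta:=c_1 G_4^3+c_2 G_6^2$ with constants chosen so that the constant term vanishes produces a nonzero cusp form $\Delta\in S_{12}(\G)$. Applying the valence formula to $\Delta$ forces $v_{i\infty}(\Delta)=1$ and $v_\tau(\Delta)=0$ for every $\tau\in\FF$, i.e.\ $\Delta$ does not vanish on $\H$. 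Consequently the map $F\mapsto \Delta\cdot F$ sends $M_{k-12}(\G)$ into $S_k(\G)$, and division by $\Delta$ (well-defined since $\Delta$ has no interior zeros and the cusp order cancels) gives the inverse, yielding $\dim S_k(\G)=\dim M_{k-12}(\G)$ for $k\ge 12$.

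For $k\ge 4$ even, the constant-term-at-infinity functional $F\mapsto a(0)$ has kernel $S_k(\G)$ and is nonzero (on $G_k$), so $\dim M_k(\G)=\dim S_k(\G)+1$. Combining this with the shift $\dim S_k(\G)=\dim M_{k-12}(\G)$ produces the recursion $\dim M_k(\G)=\dim M_{k-12}(\G)+1$ valid for all even $k\ge 12$, and the base cases $k\in\{0,2,\ldots,10\}$ computed above determine the residue class mod $12$. A routine induction turns this into the claimed floor-function formulas for $\dim M_k(\G)$, and the formulas for $\dim S_k(\G)$ then follow by subtracting $1$ when $k\ge 4$ (and are zero for $k<12$ by the base case analysis). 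The main obstacle is really only Step~2: producing $\Delta$ and verifying non-vanishing; everything else reduces to bookkeeping with the valence formula.
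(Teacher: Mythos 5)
Your argument is correct in outline and is exactly the route the paper intends: the corollary is stated there as an easy consequence of Theorem \ref{thmval} with the details omitted, and your base-case analysis, the construction of $\Delta$, the isomorphism $M_{k-12}(\G)\isom S_k(\G)$ by multiplication by the nowhere-vanishing cusp form $\Delta$, and the resulting recursion $\dim M_k(\G)=\dim M_{k-12}(\G)+1$ are the standard way to fill them in. One justification, however, is a non sequitur: knowing $\dim M_{12}(\G)\le 2$ does not show that $G_4^3$ and $G_6^2$ are non-proportional, and that non-proportionality is precisely what you need for your linear combination with vanishing constant term to be a \emph{nonzero} cusp form. The fix is already contained in your base cases: the valence formula in weight $4$ forces $G_4$ to vanish at $\rho$, while in weight $6$ it forces the only zero of $G_6$ to lie at $i$, so $G_4^3(\rho)=0\ne G_6^2(\rho)$ and the two forms are linearly independent. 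Similarly, the assertions that $k=0$ ``forces $F$ to be a nonzero constant'' and that for $k=4,\dots,10$ the unique admissible zero pattern ``shows that any two nonzero forms are proportional'' silently use the standard subtraction trick (apply the valence formula to $F-cG$, chosen to vanish at a point where the zero pattern forbids a zero); this is routine but should be stated. With these points made explicit the proof is complete.
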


Since the product of two modular forms is clearly a modular form (of weight
the sum of the two weights), It is clear that $M_{*}(\G)=\bigoplus_kM_k(\G)$
(and similarly $S_{*}(\G)$) is an algebra, whose structure is easily described:

\begin{corollary}\label{core4e6} We have $M_{*}(\G)=\C[G_4,G_6]$, and
  $S_{*}(\G)=\Delta M_{*}(\G)$, where $\Delta$ is the unique
  generator of the one-dimensional vector space $S_{12}(\G)$ whose Fourier
  expansion begins with $\Delta=q+O(q^2)$.
\end{corollary}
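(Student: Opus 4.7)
The plan is to bootstrap Corollary \ref{cordim} and the valuation formula from Theorem \ref{thmval}, then descend by induction on the weight.

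First I would build $\Delta$ explicitly inside $\C[G_4,G_6]$. Both $G_4^3$ and $G_6^2$ belong to $M_{12}(\G)$, and as $\Im(\tau)\to\infty$ they tend to the nonzero constants $(2\z(4))^3$ and $(2\z(6))^2$ respectively. Therefore some normalized combination $c(G_4^3-\la G_6^2)$ has vanishing constant term, giving a nonzero element of $S_{12}(\G)$. Since $\dim S_{12}(\G)=1$ by the corollary, the scaling $c$ can be fixed so that the resulting form begins with $q+O(q^2)$; this produces the $\Delta$ of the statement, already exhibited as a polynomial in $G_4$ and $G_6$.

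Next I would apply Theorem \ref{thmval} to $\Delta$: the right-hand side is $12/12=1$, and $v_{i\infty}(\Delta)=1$ by construction, so every other $v_\tau(\Delta)$ must vanish. Thus $\Delta$ has no zeros on $\H$. Consequently, for any $F\in S_k(\G)$ the quotient $F/\Delta$ is holomorphic on $\H$, weakly modular of weight $k-12$, and bounded at $i\infty$ (since $F$ vanishes at the cusp at least as fast as $\Delta$), hence lies in $M_{k-12}(\G)$. Multiplication by $\Delta$ therefore gives an isomorphism $M_{k-12}(\G)\to S_k(\G)$, which is exactly the claim $S_*(\G)=\Delta\cdot M_*(\G)$.

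To prove $M_*(\G)\subseteq\C[G_4,G_6]$ I would induct on the even weight $k\ge 0$. The base cases $k\in\{0,2,4,6,8,10\}$ are immediate: by Corollary \ref{cordim} each $M_k(\G)$ has dimension at most one, and for $k\ne 2$ the obvious monomials $1, G_4, G_6, G_4^2, G_4G_6$ have nonzero constant term and hence span the corresponding space. For the inductive step at weight $k\ge 12$, pick any monomial $M_0=G_4^aG_6^b$ with $4a+6b=k$ (such a representation exists for every even $k\ge 4$); then for $F\in M_k(\G)$ with constant term $a_0$, the difference $F-(a_0/M_0(i\infty))M_0$ is a cusp form, hence equals $\Delta\cdot g$ with $g\in M_{k-12}(\G)$, which is polynomial in $G_4,G_6$ by induction. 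Finally, the algebraic independence of $G_4$ and $G_6$ (needed to read $\C[G_4,G_6]$ as a true polynomial ring) drops out by counting: for each even $k$ the number of monomials $G_4^aG_6^b$ with $4a+6b=k$ matches $\dim M_k(\G)$ given by the corollary, so the spanning set we have produced is automatically a basis, ruling out any algebraic relation. I expect the main technical obstacle to be the vanishing argument for $\Delta$ in the second paragraph: the entire descent collapses without the fact that $\Delta$ has its unique zero at the cusp and is nonvanishing on $\H$, and this is the one place where the valuation formula from Theorem \ref{thmval} does real work.
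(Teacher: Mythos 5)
Your argument is correct and is exactly the derivation the paper intends (the corollary is stated there without proof, as an easy consequence of Theorem \ref{thmval} and Corollary \ref{cordim}): build $\Delta$ from $G_4^3$ and $G_6^2$, use the valuation formula to see it has its only zero at the cusp, deduce $S_k(\G)=\Delta M_{k-12}(\G)$, and induct on the weight, with algebraic independence of $G_4,G_6$ obtained by matching the monomial count against the dimension formula. The one point you should make explicit is that your normalized combination of $G_4^3$ and $G_6^2$ is not identically zero --- for instance because Theorem \ref{thmval} forces $G_4(\rho)=0$ while $G_6(\rho)\ne0$, or because the coefficient of $q$ in $E_4^3-E_6^2$ equals $1728\ne0$ --- since otherwise it could not be scaled to begin with $q+O(q^2)$ and the whole descent would have nothing to stand on.
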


Thus, for instance, $M_0(\G)=\C$, $M_2(\G)=\{0\}$, $M_4(\G)=\C G_4$,
$M_6(\G)=\C G_6$, $M_8(\G)=\C G_8=\C G_4^2$, $M_{10}(\G)=\C G_{10}=\C G_4G_6$,
$$M_{12}(\G)=\C G_{12}\oplus\C\Delta=\C G_4^3\oplus\C G_6^2\;.$$
In particular, we recover the fact proved differently that $G_8$ is a multiple
of $G_4^2$ (the exact multiple being obtained by computing the Fourier
expansions), $G_{10}$ is a multiple of $G_4G_6$, $G_{12}$ is a linear
combination of $G_4^3$ and $G_6^2$. Also, we see that $\Delta$ is
a linear combination of $G_4^3$ and $G_6^2$ (we will see this more precisely
below).

A basic result on the structure of the modular group $\G$ is the following:

\begin{proposition} Set $T=\psmm{1}{1}{0}{1}$, which acts on $\H$ by the
  unit translation $\tau\mapsto\tau+1$, and $S=\psmm{0}{-1}{1}{0}$ which
  acts on $\H$ by the symmetry-inversion $\tau\mapsto-1/\tau$.
  Then $\G$ is generated by $S$ and $T$, with relations generated by
  $S^2=-I$ and $(ST)^3=-I$ ($I$ the identity matrix).
\end{proposition}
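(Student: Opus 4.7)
The plan is to prove the two assertions separately: first that $S$ and $T$ generate $\G=\SL_2(\Z)$, and then that the two listed relations generate all relations between them.

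\textbf{Generation.} Given $\ga=\psmm{a}{b}{c}{d}\in\G$, I would run a descent on $|c|$. If $c=0$, then $ad=1$ forces $a=d=\pm1$, so $\ga=\pm T^b$, and since $-I=S^2$ this already lies in $\langle S,T\rangle$. If $c\ne0$, I use two moves: left multiplication by $T^n$ sends $\psmm{a}{b}{c}{d}$ to $\psmm{a+nc}{b+nd}{c}{d}$, while left multiplication by $S$ sends it to $\psmm{-c}{-d}{a}{b}$. Picking $q\in\Z$ with $a=qc+r$ and $0\le r<|c|$, the matrix $T^{-q}\ga$ has top-left entry $r$, and then $ST^{-q}\ga$ has bottom-left entry $r$, strictly smaller in absolute value than $|c|$. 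Induction on $|c|$ finishes the generation step.

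\textbf{Completeness of the relations.} A direct $2\times 2$ computation verifies $S^2=-I$ and $(ST)^3=-I$. To show these are the only relations needed, the cleanest route is to establish the stronger structural statement that $\PSL_2(\Z)\isom\Z/2*\Z/3$, generated freely by the images of $S$ and $ST$. I would do this via the action of $\G$ on $\H$ with fundamental domain $\FF$: the stabilizer of $i$ in $\PSL_2(\Z)$ is cyclic of order $2$ generated by $\ov{S}$, the stabilizer of $\rho=e^{2\pi i/3}$ is cyclic of order $3$ generated by $\ov{ST}$, and these are the only nontrivial stabilizers in $\FF$. A normal-form / ping-pong argument then shows that any nontrivial alternating word in $\ov{S}$ and $\ov{ST}^{\pm1}$ translates the interior of $\FF$ into a tile disjoint from $\FF$, hence cannot be the identity. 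Lifting back to $\SL_2(\Z)$ and recording the signs through $-I=S^2$ recovers the full relation set.

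The main obstacle is the completeness step. The descent argument is elementary, but ruling out hidden relations needs a genuine geometric input. The key engine is the fact — stated but not proved in the excerpt — that $\FF$ is a strict fundamental domain for $\G$ on $\H$ with prescribed boundary identifications; once this is accepted, the tessellation argument reduces the question to tracking the $\G$-orbit of an interior point of $\FF$ under an arbitrary word in $S$ and $T$, and the only ways a nontrivial word can send the point back to itself are through the elliptic stabilizers at $i$ and $\rho$, which are precisely the two given relations.
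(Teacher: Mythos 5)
The paper itself gives no proof of this proposition: it only remarks that several easy proofs exist and that the statement is essentially equivalent to $\FF$ being a fundamental domain, which is exactly the input you invoke, so your sketch is one of the standard arguments the paper alludes to rather than a divergence from it. Your generation step is complete and correct as written: the Euclidean descent on $|c|$ via left multiplication by $T^{-q}$ and $S$ is purely algebraic and needs no geometry at all, which is a small bonus. The completeness-of-relations step is only an outline, and you have located the real work correctly; two things must be made explicit when you carry it out. First, you need the purely combinatorial reduction: using only $S^2=-I$ and $(ST)^3=-I$, every word in $S$ and $T$ can be rewritten as $\pm I$ times an alternating word in $\ov{S}$ and $(\ov{ST})^{\pm1}$ (write $T=S^{-1}\cdot ST$ and reduce); this is the normal-form half of the claim that $\PSL_2(\Z)\isom\Z/2*\Z/3$. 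Second, in the ping-pong half you cannot simply quote that $\ga\FF^\circ\cap\FF^\circ=\emptyset$ for $\ga\ne1$ in $\PSL_2(\Z)$, since whether your word is nontrivial in $\PSL_2(\Z)$ is precisely what is to be proved; the induction must track, letter by letter, into which tile (or which side of the circle $|\tau|=1$ and of the lines $\Re(\tau)=\pm1/2$) the image of an interior base point is pushed, using the boundary identifications of $\FF$ around $i$ and $\rho$ --- or, if you prefer to avoid the tessellation, run the ping-pong on $\P^1(\R)$ with the sets of positive and negative irrationals, on which $\ov{S}$ and $(\ov{ST})^{\pm1}$ act in the required interlacing way. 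With those two steps written out, your lifting of signs through $-I=S^2$ does recover the full relation set, and your closing remark that the elliptic stabilizers at $i$ and $\rho$ are exactly the sources of the two relations is the correct heuristic for why nothing further can occur.
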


There are several (easy) proofs of this fundamental result, which we do not
give. Simply note that this proposition is essentially equivalent to the
fact that the set $\FF$ described above is indeed a fundamental domain.

A consequence of this proposition is that to check whether some function
$F$ has the modularity property, it is sufficient to check that
$F(\tau+1)=F(\tau)$ and $F(-1/\tau)=\tau^kF(\tau)$.

\begin{exercise}\label{ex:Bol} (Bol's identity). Let $F$ be any continuous
function defined on the upper-half plance $\H$, and define $I_0(F,a)=F$ and
for any integer $m\ge1$ and $a\in\ov{\H}$ set:
$$I_m(F,a)(\tau)=\int_a^{\tau}\dfrac{(\tau-z)^{m-1}}{(m-1)!}F(z)\,dz\;.$$
\begin{enumerate}\item Show that $I_m(F,a)'(\tau)=I_{m-1}(F,a)(\tau)$, so that
  $I_m(F,a)$ is an $m$th antiderivative of $F$.
\item Let $\ga\in\G$, and assume that $k\ge1$ is an integer. Show that
  $$I_{k-1}(F,a)|_{2-k}\ga=I_{k-1}(F|_k\ga,\ga^{-1}(a))\;.$$
\item Deduce that if we set $F^*_a=I_{k-1}(F,a)$ then
  $$D^{(k-1)}(F^*_a|_{2-k}\ga)=F|_k\ga\;,$$
where $D=(1/2\pi i)d/d\tau=qd/dq$ is the basic differential operator that
we will use (see Section \ref{sec:deriv}).
\item Assume now that $F$ is weakly modular of weight $k\ge1$ and holomorphic
  on $\H$ (in particular if $F\in M_k(\G)$, but $|F|$ could be unbounded as
  $\Im(\tau)\to\infty$). Show that
  $$(F^*_a|_{2-k}|\ga)(\tau)=F^*_a(\tau)+P_{k-2}(\tau)\;,$$
    where $P_{k-2}$ is the polynomial of degree less than or equal to $k-2$
    given by
    $$P_{k-2}(X)=\int_{\ga^{-1}(a)}^a\dfrac{(X-z)^{k-2}}{(k-2)!}F(z)\,dz\;.$$
\end{enumerate}
\end{exercise}

What this exercise shows is that the $(k-1)$st derivative of some function
which behaves modularly in weight $2-k$ behaves modularly in weight $k$,
and conversely that the $(k-1)$st antiderivative of some function which
behaves modularly in weight $k$ behaves modularly in weight $k$ up to
addition of a polynomial of degree at most $k-2$. This duality between
weights $k$ and $2-k$ is in fact a consequence of the \emph{Riemann--Roch
theorem}.

Note also that this exercise is the beginning of the fundamental theories
of \emph{periods} and of \emph{modular symbols}.

Also, it is not difficult to generalize Bol's identity. For instance, applied
to the Eisenstein series $G_4$ and using Proposition \ref{propg4} below
we obtain:

\begin{proposition}\label{propg4star}\begin{enumerate}
  \item Set $$F_4^*(\tau)=-\dfrac{\pi^3}{180}\left(\dfrac{\tau}{i}\right)^3+\sum_{n\ge1}\sigma_{-3}(n)q^n\;.$$
    We have the functional equation
    $$\tau^2F_4^*(-1/\tau)=F_4^*(\tau)+\dfrac{\zeta(3)}{2}(1-\tau^2)-\dfrac{\pi^3}{36}\dfrac{\tau}{i}\;.$$
  \item Equivalently, if we set
    $$F_4^{**}(\tau)=-\dfrac{\pi^3}{180}\left(\dfrac{\tau}{i}\right)^3-\dfrac{\pi^3}{72}\left(\dfrac{\tau}{i}\right)+\dfrac{\z(3)}{2}+\sum_{n\ge1}\sigma_{-3}(n)q^n$$
    we have the functional equation
    $$F_4^{**}(-1/\tau)=\tau^{-2}F_4^{**}(\tau)\;.$$
  \end{enumerate}
\end{proposition}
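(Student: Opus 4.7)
My plan is to apply Bol's identity (Exercise~\ref{ex:Bol}) to $F=G_4$ (weight $k=4$) and the modular inversion $\gamma=S=\psmm{0}{-1}{1}{0}$, then pin down the resulting polynomial correction. First, I would check that $F_4^*$ is, up to a polynomial in $\tau$ of degree at most $2$, a $3$-fold antiderivative of a scalar multiple of $G_4$: using the Fourier expansion of $G_4$ furnished by Proposition~\ref{propg4} together with the identity $n^3\sigma_{-3}(n)=\sigma_3(n)$, a term-by-term differentiation yields
\[
\left(\frac{d}{d\tau}\right)^{\!3} F_4^*(\tau) \;=\; -\frac{i\pi^3}{30}\Bigl(1+240\sum_{n\ge1}\sigma_3(n)q^n\Bigr) \;=\; -\frac{3i}{2\pi}\,G_4(\tau).
\]
Hence $F_4^*$ and $-\tfrac{3i}{2\pi}\,I_3(G_4,a)$ agree modulo a polynomial of degree $\le 2$, for any base point $a$.

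By parts~(2) and~(4) of Exercise~\ref{ex:Bol}, the quantity $I_3(G_4,a)|_{-2}S - I_3(G_4,a)$ is a polynomial of degree $\le 2$ (since $G_4$ is weakly modular of weight $4$), and therefore so is $F_4^*|_{-2}S - F_4^*$:
\[
\tau^2 F_4^*(-1/\tau) \;=\; F_4^*(\tau) + P_2(\tau),
\]
with $\deg P_2 \le 2$. Applying this identity twice and using that $S^2=-I$ acts trivially on $\H$ forces the anti-invariance $\tau^2 P_2(-1/\tau)=-P_2(\tau)$, which restricts $P_2$ to the two-parameter family $P_2(X)=\alpha(1-X^2)+\beta X$ for two scalars $\alpha,\beta$ to be determined.

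The main obstacle is identifying $\alpha$ and $\beta$ exactly. I would do this by matching asymptotic expansions of both sides as $\Im(\tau)\to\infty$: the right-hand side tends, modulo exponentially small terms, to its leading cubic $-\tfrac{i\pi^3\tau^3}{180}$, while the left-hand side requires the expansion of $F_4^*(z)$ as $z\to 0$ in the upper half-plane. That expansion is obtained by Mellin inversion applied to $\sum_{n\ge1}\sigma_{-3}(n)e^{-2\pi nt}$, whose associated Dirichlet series is $\zeta(s)\zeta(s+3)$. Shifting the contour past the poles at $s=1,0,-1,-2$ (the last being a simple pole once one notes that the trivial zero $\zeta(-2)=0$ cancels one order of the double pole of $\Gamma(s)\zeta(s+3)$, producing a residue proportional to $\zeta'(-2)=-\zeta(3)/(4\pi^2)$) produces respectively the polar part $\tfrac{\zeta(4)}{2\pi t}=\tfrac{i\pi^3}{180z}$, the constant $-\tfrac{\zeta(3)}{2}$, the linear term $\tfrac{\pi^3}{36}t=-\tfrac{i\pi^3}{36}z$, and the quadratic term $\tfrac{\zeta(3)}{2}z^2$ (via $t=z/i=-iz$). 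Matching against $\tau^2 F_4^*(-1/\tau) - F_4^*(\tau) = P_2(\tau)$ then yields $\alpha=\zeta(3)/2$ and $\beta=i\pi^3/36=-\pi^3/(36i)$, exactly as asserted.

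Part~(2) is then immediate from part~(1). Setting $Q(\tau)=\zeta(3)/2+(i\pi^3/72)\tau$, which via $\tau/i=-i\tau$ makes $F_4^{**}=F_4^*+Q$ agree with the displayed formula, a direct calculation gives $\tau^2 Q(-1/\tau)-Q(\tau)=-P_2(\tau)$. Adding $Q$ to both sides of the identity in part~(1) thus cancels the polynomial correction, yielding $\tau^2 F_4^{**}(-1/\tau)=F_4^{**}(\tau)$, i.e., $F_4^{**}(-1/\tau)=\tau^{-2}F_4^{**}(\tau)$.
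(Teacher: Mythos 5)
Your proof is correct, and its skeleton is exactly the one the paper intends: the paper offers no written proof, merely the remark that the proposition follows by applying Bol's identity (Exercise \ref{ex:Bol}) to $G_4$ together with Proposition \ref{propg4}, and that the $\zeta(3)$ comes from the $L$-value of $G_4$ at $s=3$. Your verification that $(d/d\tau)^3F_4^*=-\tfrac{3i}{2\pi}G_4$, the use of part (4) of Bol's identity to conclude that $\tau^2F_4^*(-1/\tau)-F_4^*(\tau)$ is a polynomial of degree at most $2$, and the reduction to the family $\alpha(1-X^2)+\beta X$ via $S^2=-I$ are all sound, as is the deduction of part (2) from part (1). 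Where you genuinely diverge is in pinning down $\alpha$ and $\beta$: the paper's hint points to the explicit period polynomial $P_{k-2}(X)=\int_{\ga^{-1}(a)}^{a}\frac{(X-z)^{k-2}}{(k-2)!}G_4(z)\,dz$, whose coefficients are (completed) $L$-values $\Lambda(G_4,1),\Lambda(G_4,2),\Lambda(G_4,3)$, i.e.\ essentially $\zeta(3)$ and powers of $\pi$; you instead expand $F_4^*$ near the cusp $0$ by Mellin inversion of $\zeta(s)\zeta(s+3)\Gamma(s)(2\pi t)^{-s}$ and match asymptotics as $\Im(\tau)\to\infty$. The two routes carry the same information (your residues at $s=1,0,-1,-2$, including the $\zeta'(-2)=-\zeta(3)/(4\pi^2)$ step, are exactly the $L$-values in disguise), but yours is self-contained given only Proposition \ref{propg4}, at the cost of justifying the contour shift, while the period-polynomial route makes the structural link with Manin's theorem and the later exercise on period polynomials more transparent. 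All your numerical constants check out.
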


Note that the appearance of $\zeta(3)$ comes from the fact that, up to
a multiplicative constant, the $L$-function associated to $G_4$ is equal
to $\z(s)\z(s-3)$, whose value at $s=3$ is equal to $-\zeta(3)/2$.

\subsection{The Scalar Product}

We begin by the following exercise:

\begin{exercise}\label{ex:dmu}\begin{enumerate}
  \item Denote by $d\mu=dxdy/y^2$ a measure on $\H$, where as usual
  $x$ and $y$ are the real and imaginary part of $\tau\in\H$. Show that this
    measure is invariant under $\SL_2(\R)$.
  \item Let $f$ and $g$ be in $M_k(\G)$. Show that the function
    $F(\tau)=f(\tau)\ov{g(\tau)}y^k$ is invariant under the modular group $\G$.
  \end{enumerate}
\end{exercise}

It follows in particular from this exercise that if $F(\tau)$ is any
integrable function which is invariant by the modular group $\G$, the
integral $\int_{\G\backslash\H}F(\tau)d\mu$ makes sense if it converges.
Since $\FF$ is a fundamental domain for the action of $\G$ on $\H$, this
can also be written $\int_{\FF}F(\tau)d\mu$. Thus it follows from the second
part that we can define
$$<f,g>=\int_{\G\backslash\H}f(\tau)\ov{g(\tau)}y^k\,\dfrac{dxdy}{y^2}\;,$$
whenever this converges.

It is immediate to show that a necessary and sufficient condition for
convergence is that at least one of $f$ and $g$ be a cusp form, i.e., lies in
$S_k(\G)$. In particular it is clear that this defines a \emph{scalar
product} on $S_k(\G)$ called the Petersson scalar product.
In addition, any cusp form in $S_k(\G)$ is \emph{orthogonal} to $G_k$
with respect to this scalar product. It is instructive to give a sketch
of the simple proof of this fact:

\begin{proposition}\label{prop:unfold} If $f\in S_k(\G)$ we have
  $<G_k,f>=0$.\end{proposition}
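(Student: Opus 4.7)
\emph{Plan.} The natural approach is the Rankin--Selberg unfolding trick. The starting observation is that $G_k$ can be written as a Poincar\'e-type sum over cosets of the stabilizer $\G_\infty = \{\pm T^n : n\in\Z\}$ of the cusp at infinity. Grouping lattice points by their $\gcd$, one gets
$$G_k(\tau) = \sum_{(m,n)\ne(0,0)}(m\tau+n)^{-k} = 2\z(k)\sum_{\ga\in\G_\infty\backslash\G}(c\tau+d)^{-k}\;,$$
where $(c,d)$ denotes the bottom row of any representative $\ga=\psmm{a}{b}{c}{d}$; the factor $2\z(k)$ collects the sum of $d^{-k}$ over $d\ge 1$, and the fact that $k$ is even handles the $\pm I$ ambiguity.

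The second step is the unfolding. A direct computation using $\overline{(c\tau+d)^k}/|c\tau+d|^{2k}=(c\tau+d)^{-k}$ and the modularity of $f$ shows the key identity
$$(c\tau+d)^{-k}\,\ov{f(\tau)}\,y^k = \ov{f(\ga\tau)}\,\Im(\ga\tau)^k\;,$$
so each term in the sum, against $\ov f\,y^k$, is a translate of the single function $\ov f\,y^k$ by $\ga$. Since $d\mu$ is $\SL_2(\R)$-invariant (Exercise \ref{ex:dmu}) and the translates $\ga\FF$ for $\ga\in\G_\infty\backslash\G$ tile a fundamental domain for $\G_\infty$ on $\H$, interchanging sum and integral yields
$$\langle G_k,f\rangle = 2\z(k)\int_{\G_\infty\backslash\H}\ov{f(\tau)}\,y^k\,d\mu = 2\z(k)\int_0^\infty y^{k-2}\left(\int_0^1\ov{f(x+iy)}\,dx\right)dy\;,$$
the fundamental domain of $\G_\infty=\langle T\rangle$ on $\H$ being the vertical strip $\{0\le x<1,\ y>0\}$.

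The conclusion is then immediate from the Fourier expansion of $f$. Since $f$ is a \emph{cusp} form, $f(\tau)=\sum_{n\ge1}a(n)q^n$ has no constant term, so $\ov{f(x+iy)}=\sum_{n\ge1}\ov{a(n)}\,e^{-2\pi ny}e^{-2\pi inx}$, and every inner integral $\int_0^1 e^{-2\pi inx}dx$ vanishes for $n\ge 1$. Hence $\langle G_k,f\rangle=0$.

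The only genuinely delicate point is the justification of the interchange of sum and integral in the unfolding step, which requires absolute convergence of the coset sum $\sum(c\tau+d)^{-k}$. This needs $k>2$, but since $f\in S_k(\G)\setminus\{0\}$ forces $k\ge12$ by Corollary \ref{cordim}, there is nothing to worry about in our setting. (The only reason I single this out is that for $k=2$ the same trick requires Hecke's summation and is the historical origin of the holomorphic projection machinery.)
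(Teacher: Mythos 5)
Your proof is correct and follows essentially the same route as the paper's: the gcd decomposition $G_k=2\z(k)\sum_{\ga\in\G_\infty\backslash\G}(c\tau+d)^{-k}$, the identity $(c\tau+d)^{-k}\ov{f(\tau)}y^k=\ov{f(\ga(\tau))}\Im(\ga(\tau))^k$, unfolding to the strip $\{0\le x<1,\ y>0\}$, and the vanishing of the constant Fourier coefficient of the cusp form $f$. Your added remark on absolute convergence (needing $k>2$) is a welcome precision that the paper leaves implicit.
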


\begin{proof} Recall that
  $G_k(\tau)=\sum_{(m,n)\in\Z^2\setminus\{(0,0)\}}(m\tau+n)^{-k}$. We
  split the sum according to the GCD of $m$ and $n$: we let $d=\gcd(m,n)$,
  so that $m=dm_1$ and $n=dn_1$ with $\gcd(m_1,n_1)=1$. It follows that
  $$G_k(\tau)=2\sum_{d\ge1}d^{-k}E_k(\tau)=2\z(k)E_k(\tau)\;,$$
  where $E_k(\tau)=(1/2)\sum_{\gcd(m,n)=1}(m\tau+n)^{-k}$. We thus need to
  prove that $<E_k,f>=0$.

  On the other hand, denote by $\G_\infty$ the group generated by $T$, i.e.,
  translations $\psmm{1}{b}{0}{1}$ for $b\in\Z$. This acts by left
  multiplication on $\G$, and it is immediate to check that a system of
  representatives for this action is given by matrices $\psmm{u}{v}{m}{n}$,
  where $\gcd(m,n)=1$ and $u$ and $v$ are chosen arbitrarily (but only once
  for each pair $(m,n)$) such that $un-vm=1$. It follows that we can
  write
  $$E_k(\tau)=\sum_{\ga\in\G_\infty\backslash\G}(m\tau+n)^{-k}\;,$$
  where it is understood that $\ga=\psmm{u}{v}{m}{n}$
  (the factor $1/2$ has disappeared since $\ga$ and $-\ga$ have the
  same action on $\H$).

  Thus
  \begin{align*}<E_k,f>&=\int_{\G\backslash\H}\sum_{\ga\in\G_\infty\backslash\G}(m\tau+n)^{-k}\ov{f(\tau)}y^k\,\dfrac{dxdy}{y^2}\\
    &=\sum_{\ga\in\G_\infty\backslash\G}\int_{\G\backslash\H}(m\tau+n)^{-k}\ov{f(\tau)}y^k\,\dfrac{dxdy}{y^2}\;.\end{align*}

  Now note that by modularity $f(\tau)=(m\tau+n)^{-k}f(\ga(\tau))$, and
  since $\Im(\ga(\tau))=\Im(\tau)/|m\tau+n|^2$ it follows that
  $$(m\tau+n)^{-k}\ov{f(\tau)}y^k=\ov{f(\ga(\tau))}\Im(\ga(\tau))^k\;.$$
  Thus, since $d\mu=dxdy/y^2$ is an invariant measure we have
  \begin{align*}<E_k,f>&=\sum_{\ga\in\G_\infty\backslash\G}\int_{\G\backslash\H}\ov{f(\ga(\tau))}\Im(\ga(\tau))^kd\mu
    =\int_{\G_\infty\backslash\H}\ov{f(\tau)}y^k\,\dfrac{dxdy}{y^2}\;.\end{align*}

  Since $\G_\infty$ is simply the group of integer translations, a fundamental
  domain for $\G_\infty\backslash\H$ is simply the vertical strip
  $[0,1]\times[0,\infty[$, so that
  $$<E_k,f>=\int_0^\infty y^{k-2}dy\int_0^1\ov{f(x+iy)}dx\;,$$
  which trivially vanishes since the inner integral is simply the
  conjugate of the constant term in the Fourier expansion of $f$,
  which is $0$ since $f\in S_k(\G)$.\end{proof}

The above procedure (replacing the complicated fundamental domain of
$\G\backslash\H$ by the trivial one of $\G_\infty\backslash\H$) is very common
in the theory of modular forms and is called \emph{unfolding}.

\subsection{Fourier Expansions}

The Fourier expansions of the Eisenstein series $G_{2k}(\tau)$ are easy to
compute. The result is the following:

\begin{proposition}\label{propg4} For $k\ge4$ even we have the Fourier
expansion
  $$G_k(\tau)=2\zeta(k)+2\dfrac{(2\pi i)^{k}}{(k-1)!}\sum_{n\ge1}\sigma_{k-1}(n)q^n\;,$$
  where $\sigma_{k-1}(n)=\sum_{d\mid n,\ d>0}d^{k-1}$.
\end{proposition}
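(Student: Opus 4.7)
The plan is to obtain the Fourier expansion of $G_k(\tau)$ by separating the $m=0$ contribution (which gives the constant term $2\zeta(k)$) and then computing the inner sum over $n \in \Z$ for each fixed $m \ne 0$ via the Lipschitz/cotangent identity.

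First, I would write
\[
G_k(\tau) = \sum_{n \neq 0} \frac{1}{n^k} + \sum_{m \neq 0} \sum_{n \in \Z} \frac{1}{(m\tau+n)^k}.
\]
The first sum equals $2\zeta(k)$ since $k$ is even. For the double sum, the key ingredient is the classical partial-fraction expansion
\[
\pi \cot(\pi z) = \frac{1}{z} + \sum_{n \ge 1}\left(\frac{1}{z+n} + \frac{1}{z-n}\right),
\]
which, using $\cot(\pi z) = i(q^{1/2}+q^{-1/2})/(q^{1/2}-q^{-1/2})$ with $q = e^{2\pi i z}$, rewrites as
\[
\frac{1}{z} + \sum_{n \ge 1}\left(\frac{1}{z+n} + \frac{1}{z-n}\right) = -\pi i - 2\pi i \sum_{d \ge 1} q^d.
\]
Differentiating $k-1$ times in $z$ (both sides converge absolutely enough to justify termwise differentiation) yields the \emph{Lipschitz formula}
\[
\sum_{n \in \Z} \frac{1}{(z+n)^k} = \frac{(-2\pi i)^k}{(k-1)!} \sum_{d \ge 1} d^{k-1} q^d,
\]
valid for $k \ge 2$ and $\Im(z) > 0$; I would spell out the sign bookkeeping (derivative of $(z+n)^{-1}$ produces the factor $(-1)^{k-1}(k-1)!$, while the derivative of $q^d$ produces $(2\pi i d)^{k-1}$).

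Next, I would substitute $z = m\tau$ (with $m \ge 1$) so that $q^d$ becomes $q^{md}$, and observe that for even $k$ the sum over $m < 0$ equals the sum over $m > 0$. Therefore
\[
\sum_{m \ne 0} \sum_{n \in \Z} \frac{1}{(m\tau+n)^k} = 2 \cdot \frac{(2\pi i)^k}{(k-1)!} \sum_{m \ge 1} \sum_{d \ge 1} d^{k-1} q^{md}.
\]
Collecting by $N = md$ and using $\sigma_{k-1}(N) = \sum_{d \mid N} d^{k-1}$ gives the asserted expansion
\[
G_k(\tau) = 2\zeta(k) + 2 \cdot \frac{(2\pi i)^k}{(k-1)!} \sum_{N \ge 1} \sigma_{k-1}(N) q^N.
\]

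The main obstacle is the justification of termwise differentiation of the cotangent expansion and of the interchange of the two sums in the last step; both follow from absolute and (locally) normal convergence once $k \ge 2$, since $|(m\tau+n)^{-k}|$ decays like $(m^2+n^2)^{-k/2}$ on compact subsets of $\H$. Everything else is formal manipulation plus the even-$k$ symmetry that collapses $m < 0$ onto $m > 0$.
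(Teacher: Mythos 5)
Your argument is correct and complete: splitting off the $m=0$ terms, applying the Lipschitz formula (obtained by differentiating the cotangent partial-fraction expansion $k-1$ times), using evenness of $k$ to fold $m<0$ onto $m>0$, and regrouping by $N=md$ is exactly the standard computation that the paper alludes to when it calls the expansion ``easy to compute'' and then states the proposition without proof. One tiny caveat: absolute convergence of the full double sum over $(m,n)$ requires $k>2$ rather than $k\ge2$ (only the inner sum over $n$ and the final $q$-series rearrangement work for $k=2$), but since the proposition assumes $k\ge4$ this does not affect your proof.
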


Since we know that when $k$ is even
$2\zeta(k)=-(2\pi i)^kB_k/k!$, where $B_k$ is the $k$-th Bernoulli number
defined by
$$\dfrac{t}{e^t-1}=\sum_{k\ge0}\dfrac{B_k}{k!}t^k\;,$$
it follows that $G_k=2\zeta(k)E_k$, with
$$E_k(\tau)=1-\dfrac{2k}{B_k}\sum_{n\ge1}\sigma_{k-1}(n)q^n\;.$$
This is the normalization of Eisenstein series that we will use. For instance
\begin{align*}
  E_4(\tau)&=1+240\sum_{n\ge1}\sigma_3(n)q^n\;,\\
  E_6(\tau)&=1-504\sum_{n\ge1}\sigma_5(n)q^n\;,\\
  E_8(\tau)&=1+480\sum_{n\ge1}\sigma_7(n)q^n\;.\end{align*}
In particular, the relations given above which follow from the dimension
formula become much simpler and are obtained simply by looking at the
first terms in the Fourier expansion:
$$E_8=E_4^2\;,\quad E_{10}=E_4E_6\;,\quad E_{12}=\dfrac{441E_4^3+250E_6^2}{691}\;,\quad\Delta=\dfrac{E_4^3-E_6^2}{1728}\;.$$

Note that the relation $E_4^2=E_8$ (and the others) implies a highly nontrivial
relation between the sum of divisors function: if we set by convention
$\sigma_3(0)=1/240$, so that $E_4(\tau)=\sum_{n\ge0}\sigma_3(n)q^n$, we have
$$E_8(\tau)=E_4^2(\tau)=240^2\sum_{n\ge0}q^n\sum_{0\le m\le n}\sigma_3(m)\sigma_3(n-m)\;,$$
so that by identification
$\sigma_7(n)=120\sum_{0\le m\le n}\sigma_3(m)\sigma_3(n-m)$, so
$$\sigma_7(n)=\sigma_3(n)+120\sum_{1\le m\le n-1}\sigma_3(m)\sigma_3(n-m)\;.$$
It is quite difficult (but not impossible) to prove this directly, i.e.,
without using at least indirectly the theory of modular forms.

\begin{exercise} Find a similar relation for $\sigma_9(n)$ using
$E_{10}=E_4E_6$.\end{exercise}

This type of reasoning is one of the reasons for which the theory of modular
forms is so important (and lots of fun!): if you have a modular form $F$, you
can usually express it in terms of a completely explicit basis of the space to
which it belongs since spaces of modular forms are \emph{finite-dimensional}
(in the present example, the space is one-dimensional), and deduce
highly nontrivial relations for the Fourier coefficients. We will see a further
example of this below for the number $r_k(n)$ of representations of an integer
$n$ as a sum of $k$ squares.

\begin{exercise}\begin{enumerate}\item Prove that for any $k\in\C$ we have the
identity
$$\sum_{n\ge1}\sigma_k(n)q^n=\sum_{n\ge1}\dfrac{n^kq^n}{1-q^n}\;,$$
the right-hand side being called a \emph{Lambert series}.
\item Set $F(k)=\sum_{n\ge1}n^k/(e^{2\pi n}-1)$. Using the Fourier expansions
given above, compute explicitly $F(5)$ and $F(9)$.
\item Using Proposition \ref{propg4star}, compute explicitly $F(-3)$.
\item Using Proposition \ref{prop:E2} below, compute explicitly $F(1)$.
\end{enumerate}\end{exercise}

Note that in this exercise we only compute $F(k)$ for $k\equiv1\pmod4$. It is
also possible but more difficult to compute $F(k)$ for $k\equiv3\pmod4$. For
instance we have:
$$F(3)=\dfrac{\Gamma(1/4)^8}{80(2\pi)^6}-\dfrac{1}{240}\;.$$

\subsection{Obtaining Modular Forms by Averaging}

We have mentioned at the beginning of this course that one of the ways to
obtain functions satisfying functional equations is to use \emph{averaging}
over a suitable group or set: we have seen this for periodic functions in
the form of the Poisson summation formula, and for doubly-periodic
functions in the construction of the Weierstrass $\wp$-function.
We can do the same for modular forms, but we must be careful in two different
ways. First, we do not want \emph{invariance} by $\G$, but we want an
automorphy factor $(c\tau+d)^k$. This is easily dealt with by noting that
$(d/d\tau)(\ga(\tau))=(c\tau+d)^{-2}$: indeed, if $\phi$ is some function on
$\H$ we can define
$$F(\tau)=\sum_{\ga\in\G}\phi(\ga(\tau))((d/d\tau)(\ga(\tau)))^{k/2}\;.$$

\begin{exercise} Ignoring all convergence questions, by using the chain rule
$(f\circ g)'=(f'\circ g)g'$ show that for all $\delta=\psmm{A}{B}{C}{D}\in\G$
we have
$$F(\delta(\tau))=(C\tau+D)^kF(\tau)\;.$$
\end{exercise}

But the second important way in which we must be careful is that the
above contruction rarely converges. There are, however, examples where it
does converge:

\begin{exercise} Let $\phi(\tau)=\tau^{-m}$, so that
$$F(\tau)=\sum_{\ga=\psmm{a}{b}{c}{d}\in\G}\dfrac{1}{(a\tau+b)^m(c\tau+d)^{k-m}}\;.$$
Show that if $2\le m\le k-2$ and $m\ne k/2$ this series converges
normally on any compact subset of $\H$ (i.e., it is majorized by a convergent
series with positive terms), so defines a modular form in $M_k(\G)$.
\end{exercise}

Note that the series converges also for $m=k/2$, but this is more difficult.

One of the essential reasons for non-convergence of the function $F$ is
the trivial observation that for a given pair of coprime integers $(c,d)$
there are infinitely many elements $\ga\in\G$ having $(c,d)$ as their
second row. Thus in general it seems more reasonable to define
$$F(\tau)=\sum_{\gcd(c,d)=1}\phi(\ga_{c,d}(\tau))(c\tau+d)^{-k}\;,$$
where $\ga_{c,d}$ is \emph{any fixed} matrix in $\G$ with second row equal to
$(c,d)$. However, we need this to make sense: if
$\ga_{c,d}=\psmm{a}{b}{c}{d}\in\G$ is one such matrix, it is clear that the
general matrix having second row equal to $(c,d)$ is
$T^n\psmm{a}{b}{c}{d}=\psmm{a+nc}{b+nd}{c}{d}$, and as usual
$T=\psmm{1}{1}{0}{1}$ is translation by $1$: $\tau\mapsto\tau+1$. Thus, an
essential necessary condition for our series to make any kind of sense is
that the function $\phi$ be \emph{periodic} of period~$1$.

The simplest such function is of course the constant function $1$:

\begin{exercise} (See the proof of Proposition \ref{prop:unfold}.) Show that 
$$F(\tau)=\sum_{\gcd(c,d)=1}(c\tau+d)^{-k}=2E_k(\tau)\;,$$
where $E_k$ is the normalized Eisenstein series defined above.
\end{exercise}

But by the theory of Fourier series, we know that periodic functions of
period $1$ are (infinite) linear combinations of the functions
$e^{2\pi i n\tau}$. This leads to the definition of \emph{Poincar\'e series}:

$$P_k(n;\tau)=\dfrac{1}{2}\sum_{\gcd(c,d)=1}\dfrac{e^{2\pi in\ga_{c,d}(\tau)}}{(c\tau+d)^k}\;,$$
where we note that we can choose any matrix $\ga_{c,d}$ with bottom row
$(c,d)$ since the function
$e^{2\pi in\tau}$ is $1$-periodic, so that $P_k(n;\tau)\in M_k(\G)$.

\begin{exercise} Assume that $k\ge4$ is even.
\begin{enumerate}
\item Show that if $n<0$ the series defining $P_k$ diverges (wildly in fact).
\item Note that $P_k(0;\tau)=E_k(\tau)$, so that
$\lim_{\tau\to i\infty}P_k(0;\tau)=1$.
  Show that if $n>0$ the series converges normally and
  that we have $\lim_{\tau\to i\infty}P_k(n;\tau)=0$. Thus in fact
  $P_k(n;\tau)\in S_k(\G)$ if $n>0$.
\item By using the same \emph{unfolding method} as in Proposition
  \ref{prop:unfold}, show that if $f=\sum_{n\ge0}a(n)q^n\in M_k(\G)$
  and $n>0$ we have
  $$<P_k(n),f>=\dfrac{(k-2)!}{(4\pi n)^{k-1}}a(n)\;.$$
\end{enumerate}
\end{exercise}
  
It is easy to show that in fact the $P_k(n)$ \emph{generate} $S_k(\G)$.
We can also compute their \emph{Fourier expansions} as we have done for
$E_k$, but they involve Bessel functions and Kloosterman sums.

\subsection{The Ramanujan Delta Function}

Recall that by definition $\Delta$ is the generator of the $1$-dimensional
space $S_{12}(\G)$ whose Fourier coefficient of $q^1$ is normalized to be
equal to $1$. By simple computation, we find the first terms in the Fourier
expansion of $\Delta$:
$$\Delta(\tau)=q-24q^2+252q^3-1472q^4+\cdots\;,$$
with no apparent formula for the coefficients. The $n$th coefficient is
denoted $\tau(n)$ (no confusion with $\tau\in\H$), and called Ramanujan's
tau function, and $\Delta$ itself is called Ramanujan's Delta function.

Of course, using
$\Delta=(E_4^3-E_6^2)/1728$ and expanding the powers, one can give a
complicated but explicit formula for $\tau(n)$ in terms
of the functions $\sigma_3$ and $\sigma_5$, but this is far from being the
best way to compute them. In fact, the following exercise already
gives a much better method.

\begin{exercise}\label{prob1} Let $D$ be the differential operator
$(1/(2\pi i))d/d\tau=qd/dq$.
\begin{enumerate}\item Show that the function $F=4E_4D(E_6)-6E_6D(E_4)$
  is a modular form of weight $12$, then by looking at its constant term show
  that it is a cusp form, and finally compute the constant $c$ such that
  $F=c\cdot\Delta$.
\item Deduce the formula
  $$\tau(n)=\dfrac{n}{12}(5\sigma_3(n)+7\sigma_5(n))+70\sum_{1\le m\le n-1}(2n-5m)\sigma_3(m)\sigma_5(n-m)\;.$$
\item Deduce in particular the congruences
  $\tau(n)\equiv n\sigma_5(n)\equiv n\sigma_1(n)\pmod5$ and
  $\tau(n)\equiv n\sigma_3(n)\pmod7$.
\end{enumerate}
\end{exercise}

Although there are much faster methods, this is already a very reasonable
way to compute $\tau(n)$.

The cusp form $\Delta$ is one of the most important functions in the theory of
modular forms. Its first main property, which is not at all apparent from
its definition, is that it has a \emph{product expansion}:

\begin{theorem} We have
  $$\Delta(\tau)=q\prod_{n\ge1}(1-q^n)^{24}\;.$$
\end{theorem}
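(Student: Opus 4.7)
The plan is to define $f(\tau) := q\prod_{n\ge 1}(1-q^n)^{24}$ and prove directly that $f\in S_{12}(\G)$ with $a(1)=1$; uniqueness of such a form (Corollary \ref{cordim} gives $\dim S_{12}(\G)=1$, and $\Delta$ is its $a(1)=1$ generator) will then force $f=\Delta$.

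First I would check the easy analytic properties. Because $\sum_{n\ge1}|q^n|$ converges normally on compacta in $\H$, the infinite product converges absolutely and uniformly on compacta, so $f$ is holomorphic and nonvanishing on $\H$, and its Fourier expansion starts $f(\tau)=q+O(q^2)$, so in particular $\lim_{\Im\tau\to\infty}f(\tau)=0$. Periodicity $f(\tau+1)=f(\tau)$ is immediate from $q=e^{2\pi i\tau}$. Since $\G=\langle S,T\rangle$ (the proposition stated above, right after Corollary \ref{core4e6}), to get weak modularity of weight $12$ it remains to verify $f(-1/\tau)=\tau^{12}f(\tau)$.

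The key computation is the logarithmic derivative. Writing $\log f(\tau)=2\pi i\tau+24\sum_{n\ge1}\log(1-q^n)$ and applying $D=\frac{1}{2\pi i}\frac{d}{d\tau}=q\frac{d}{dq}$ gives
$$\frac{1}{2\pi i}\frac{f'(\tau)}{f(\tau)} \;=\; 1-24\sum_{n\ge1}\frac{nq^n}{1-q^n} \;=\; 1-24\sum_{n\ge1}\sigma_1(n)q^n \;=\; E_2(\tau),$$
using the Lambert series identity of the preceding exercise. Now consider
$h(\tau):=f(-1/\tau)\,\tau^{-12}f(\tau)^{-1}$; its logarithmic derivative in $\tau$ is
$$\frac{h'(\tau)}{h(\tau)}=\frac{1}{\tau^2}\cdot 2\pi i\,E_2(-1/\tau)-\frac{12}{\tau}-2\pi i\,E_2(\tau).$$
Substituting the quasi-modular transformation $E_2(-1/\tau)=\tau^2E_2(\tau)+\frac{12\tau}{2\pi i}$ (Proposition \ref{prop:E2} in the text) makes the right-hand side vanish identically. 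Hence $h$ is constant on $\H$; evaluating at $\tau=i$ (so $-1/\tau=i$ and $\tau^{12}=1$) yields $h(i)=1$, giving $f(-1/\tau)=\tau^{12}f(\tau)$ as required.

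Combining $T$- and $S$-invariance under the slash operator shows $f$ is weakly modular of weight $12$; together with holomorphy and vanishing at $i\infty$ this gives $f\in S_{12}(\G)$. The $q$-coefficient is $1$, so $f=\Delta$ by the uniqueness above. The principal obstacle is not the formal manipulation but the input $E_2(-1/\tau)=\tau^2E_2(\tau)+\frac{12\tau}{2\pi i}$: the naive series $\sum(m\tau+n)^{-2}$ is only conditionally convergent, so this identity requires a separate argument (Hecke's trick: regularize by inserting $|m\tau+n|^{-s}$, take $s\to 0$, and track the non-holomorphic correction term), which is precisely the content of the referenced Proposition \ref{prop:E2}.
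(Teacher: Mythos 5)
Your proof is correct and follows essentially the same route as the paper: compute $D(f)/f=E_2$, invoke the quasi-modularity of $E_2$ (Proposition \ref{prop:E2}, obtained via Hecke's trick), deduce the weight-$12$ transformation law, and conclude from $\dim S_{12}(\G)=1$ with the normalization $a(1)=1$. The only difference is cosmetic: you check modularity just on the generators $S$ and $T$ and pin down the constant of integration by evaluating at the fixed point $\tau=i$, a detail the paper's ``integrate and exponentiate'' step passes over silently.
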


\begin{proof} We are not going to give a complete proof, but sketch a method which is
one of the most natural to obtain the result.

We start backwards, from the product $R(\tau)$ on the right-hand side.
The logarithm transforms products into sums, but in the case of
\emph{functions} $f$, the \emph{logarithmic derivative} $f'/f$ (more precisely
$D(f)/f$, where $D=qd/dq$) also does this, and it is also more convenient.
We have
$$D(R)/R=1-24\sum_{n\ge1}\dfrac{nq^n}{1-q^n}=1-24\sum_{n\ge1}\sigma_1(n)q^n$$
as is easily seen by expanding $1/(1-q^n)$ as a geometric series. This
is exactly the case $k=2$ of the Eisenstein series $E_k$, which we have
excluded from our discussion for convergence reasons, so we come back to
our series $G_{2k}$ (we will divide by the normalizing factor
$2\z(2)=\pi^2/3$ at the end), and introduce a convergence factor due to Hecke,
setting
$$G_{2,s}(\tau)=\sum_{(m,n)\in\Z^2\setminus\{(0,0)\}}(m\tau+n)^{-2}|m\tau+n|^{-2s}\;.$$
As above this converges for $\Re(s)>0$, satisfies
$$G_{2,s}(\ga(\tau))=(c\tau+d)^2|c\tau+d|^{2s}G_{2,s}(\tau)$$
hence in particular is periodic of period $1$. It is straightforward to
compute its Fourier expansion, which we will not do here, and the Fourier
expansion shows that $G_{2,s}$ has an \emph{analytic continuation} to the
whole complex plane. In particular, the limit as $s\to0$ makes sense;
if we denote it by $G_2^*(\tau)$, by continuity it will of course satisfy
$G_2^*(\ga(\tau))=(c\tau+d)^2G_2^*(\tau)$, and the analytic continuation
of the Fourier expansion that has been computed gives
$$G_2^*(\tau)=\dfrac{\pi^2}{3}\left(1-\dfrac{3}{\pi\Im(\tau)}-24\sum_{n\ge1}\sigma_1(n)q^n\right)\;.$$
Note the essential fact that there is now a \emph{nonanalytic term}
$3/(\pi\Im(\tau))$. We will of course set the following definition:

\begin{definition} We define
  $$E_2(\tau)=1-24\sum_{n\ge1}\sigma_1(n)q^n\text{\quad and\quad}E_2^*(\tau)=E_2(\tau)-\dfrac{3}{\pi\Im(\tau)}\;.$$
\end{definition}

Thus $E_2(\tau)=D(R)/R$, $G_2^*(\tau)=(\pi^2/3)E_2^*(\tau)$, and we have
the following:

\begin{proposition}\label{prop:E2} For any $\ga=\psmm{a}{b}{c}{d}\in\G$ We have
  $E_2^*(\ga(\tau))=(c\tau+d)^2E_2^*(\tau)$. Equivalently,
  $$E_2(\ga(\tau))=(c\tau+d)^2E_2(\tau)+\dfrac{12}{2\pi i}c(c\tau+d)\;.$$
\end{proposition}

\begin{proof} The first result has been seen above, and the second follows from the
formula $\Im(\ga(\tau))=\Im(\tau)/|c\tau+d|^2$.\fp\end{proof}

\begin{exercise}\label{ex:e2} Show that
  $$E_2(\tau)=-24\left(-\dfrac{1}{24}+\sum_{m\ge1}\dfrac{m}{q^{-m}-1}\right)\;.$$
\end{exercise}

{\it Proof of the theorem. \/}
We can now prove the theorem on the product expansion of $\Delta$: noting that
$(d/d\tau)\ga(\tau)=1/(c\tau+d)^2$, the above formulas imply that
if we set $S=R(\ga(\tau))$ we have
\begin{align*}\dfrac{D(S)}{S}&=\dfrac{D(R)}{R}(\ga(\tau))(d/d\tau)(\ga(\tau))\\
  &=(c\tau+d)^{-2}E_2(\ga(\tau))=E_2(\tau)+\dfrac{12}{2\pi i}\dfrac{c}{c\tau+d}\\
  &=\dfrac{D(R)}{R}(\tau)+12\dfrac{D(c\tau+d)}{c\tau+d}\;.\end{align*}
By integrating and exponentiating, it follows that
$$R(\ga(\tau))=(c\tau+d)^{12}R(\tau)\;,$$
and since clearly $R$ is holomorphic on $\H$ and tends to $0$ as
$\Im(\tau)\to\infty$ (i.e., as $q\to0$), it follows that $R$ is a cusp
form of weight $12$ on $\G$, and since $S_{12}(\G)$ is $1$-dimensional and
the coefficient of $q^1$ in $R$ is $1$, we have $R=\Delta$, proving the
theorem.\fp\end{proof}

\begin{exercise} We have shown in passing that $D(\Delta)=E_2\Delta$.
Expanding the Fourier expansion of both sides, show that we have the recursion
$$(n-1)\tau(n)=-24\sum_{1\le m\le n-1}\sigma_1(m)\tau(n-m)\;.$$
\end{exercise}

\begin{exercise}
  \begin{enumerate}
  \item Let $F\in M_k(\G)$, and for some \emph{squarefree} integer $N$ set
    $$G(\tau)=\sum_{d\mid N}\mu(d)d^{k/2}F(d\tau)\;,$$
    where $\mu$ is the M\"obius function. Show that $G|_kW_N=\mu(N)G$, where
    $W_N=\psmm{0}{-1}{N}{0}$ is the so-called \emph{Fricke involution}.
  \item Show that if $N>1$ the same result is true for $F=E_2$, although $E_2$
    is only quasi-modular.
  \item Deduce that if $\mu(N)=(-1)^{k/2-1}$ we have $G(i/\sqrt{N})=0$.
  \item Applying this to $E_2$ and using Exercise \ref{ex:e2}, deduce that if
    $\mu(N)=1$ and $N>1$ we have
    $$\sum_{\gcd(m,N)=1}\dfrac{m}{e^{2\pi m/\sqrt{N}}-1}=\dfrac{\phi(N)}{24}\;,$$
    where $\phi(N)$ is Euler's totient function.
  \item Using directly the functional equation of $E_2^*$, show that for
    $N=1$ there is an additional term $-1/(8\pi)$, i.e., that
    $$\sum_{m\ge1}\dfrac{m}{e^{2\pi m}-1}=\dfrac{1}{24}-\dfrac{1}{8\pi}\;.$$  
  \end{enumerate}
\end{exercise}

\subsection{Product Expansions and the Dedekind Eta Function}

We continue our study of product expansions. We first mention an important
identity due to Jacobi, the triple product identity, as well as some
consequences:

\begin{theorem}[Triple product identity] If $|q|<1$ and $u\ne0$ we have
  $$\prod_{n\ge1}(1-q^n)(1-q^nu)\prod_{n\ge0}(1-q^n/u)
  =\sum_{k\ge0}(-1)^k(u^k-u^{-(k+1)})q^{k(k+1)/2}\;.$$
\end{theorem}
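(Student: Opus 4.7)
The plan is to show that both sides, viewed as Laurent series in $u$, satisfy the same functional equation under $u\mapsto qu$, which forces them to be proportional, and then to pin down the constant by specialization.

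First I would rewrite the right-hand side as a bilateral sum. The involution $k\mapsto -k-1$ preserves the exponent $k(k+1)/2$ and sends $(-1)^k u^k$ to $-(-1)^k u^{-(k+1)}$, so the right-hand side equals $H(u):=\sum_{k\in\Z}(-1)^k u^k q^{k(k+1)/2}$. A direct index shift $k\mapsto k-1$ then yields $H(qu)=-(qu)^{-1}H(u)$. For the left-hand side $F(u)$, absolute convergence of the three products for $u\ne 0$ lets me compute $F(qu)/F(u)$ by telescoping: $\prod_{n\ge 1}(1-q^{n+1}u)/(1-q^n u)=1/(1-qu)$ and $\prod_{n\ge 0}(1-q^{n-1}/u)/(1-q^n/u)=1-q^{-1}/u$, and their product simplifies to $-(qu)^{-1}$, so $F$ satisfies the same functional equation as $H$.

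Next, writing $F(u)=\sum_{k\in\Z} a_k(q) u^k$ as a Laurent series, the functional equation forces the recursion $a_{k+1}(q)=-q^{k+1}a_k(q)$, which gives $a_k(q)=(-1)^k q^{k(k+1)/2} a_0(q)$. Hence $F(u)=a_0(q)H(u)$, and the whole problem reduces to showing $a_0(q)=1$. Setting $q=0$ collapses both sides to $1-u^{-1}$ and so gives $a_0(0)=1$.

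Closing the gap to $a_0(q)\equiv 1$ is the main obstacle, since the functional equation in $u$ fixes the solution only up to a scalar in $\C[[q]]$. The cleanest completion specializes $u=-1$: using $(1-q^n)(1+q^n)=1-q^{2n}$ together with Euler's identity $\prod_{n\ge 1}(1+q^n)=\prod_{n\ge 1}(1-q^{2n-1})^{-1}$, the left-hand side evaluates to $F(-1)=2\prod_{n\ge 1}(1-q^{2n})/(1-q^{2n-1})$, while the right-hand side becomes $H(-1)=2\sum_{k\ge 0}q^{k(k+1)/2}$. Their equality is Gauss's classical identity for the triangular-number generating function, which can be established independently (for instance by a partition bijection or from the $q$-binomial theorem), finishing the proof.
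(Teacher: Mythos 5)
Your argument coincides with the paper's own proof up to the key reduction: the paper likewise expands the left-hand side as a Laurent series $\sum_{k\in\Z}a_k(q)u^k$, uses the substitution $u\mapsto u/q$ (equivalent to your $u\mapsto qu$) to obtain $a_k(q)=-q^k a_{k-1}(q)$, hence $a_k(q)=(-1)^kq^{k(k+1)/2}a_0(q)$, and is then left with the normalization $a_0(q)=1$, which is the only delicate point. Where you genuinely diverge is in how that constant is pinned down. The paper stays self-contained: it specializes $u=i/q^{1/2}$ and $u=1/q^{1/2}$, and a short computation with the resulting theta-type sums gives $a_0(q^4)=a_0(q)$, whence $a_0(q)=a_0(q^{4^n})\to a_0(0)=1$ and $a_0\equiv 1$. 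You instead set $u=-1$; your evaluations of the two sides are correct, and the normalization becomes Gauss's identity $\sum_{k\ge0}q^{k(k+1)/2}=\prod_{n\ge1}(1-q^{2n})/(1-q^{2n-1})$. This route is legitimate, but be aware of what it costs: you have transferred the entire difficulty to an external classical identity of essentially the same depth, and in many references Gauss's identity is itself deduced \emph{from} the triple product, so to avoid circularity you must actually supply one of the independent proofs you allude to (a limiting case of the $q$-binomial theorem, or a partition bijection). In short, the paper's specialization trick buys self-containedness at the price of a slightly opaque computation, while your $u=-1$ specialization is cleaner and more transparent provided Gauss's identity is established independently beforehand.
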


\begin{proof} (sketch): denote by $L(q,u)$ the left-hand side. We have clearly
$L(q,u/q)=-uL(q,u)$, and since one can write $L(q,u)=\sum_{k\in\Z}a_k(q)u^k$
this implies the recursion $a_k(q)=-q^ka_{k-1}(q)$, so
$a_k(q)=(-1)^kq^{k(k+1)/2}a_0(q)$, and separating $k\ge0$ and $k<0$ this
shows that
$$L(q,u)=a_0(q)\sum_{k\ge0}(-1)^k(u^k-u^{-(k+1)})q^{k(k+1)/2}\;.$$
The slightly longer part is to show that $a_0(q)=1$: this is done by
setting $u=i/q^{1/2}$ and $u=1/q^{1/2}$, which after a little computation
implies that $a(q^4)=a(q)$, and from there it is immediate to deduce that
$a(q)$ is a constant, and equal to $1$.\fp\end{proof}

To give the next corollaries, we need to define the
\emph{Dedekind eta function} $\eta(\tau)$, by
$$\eta(\tau)=q^{1/24}\prod_{n\ge1}(1-q^n)\;,$$
(recall that $q^{\al}=e^{2\pi i\al\tau}$).
Thus by definition $\eta(\tau)^{24}=\Delta(\tau)$. Since
$\Delta(-1/\tau)=\tau^{12}\Delta(\tau)$, it follows that
$\eta(-1/\tau)=c\cdot(\tau/i)^{1/2}\eta(\tau)$ for some $24$th root of unity
$c$ (where we always use the principal determination of the square root), and
since we see from the infinite product that $\eta(i)\ne0$, replacing
$\tau$ by $i$ shows that in fact $c=1$. Thus $\eta$ satisfies the two
basic modular equations
$$\eta(\tau+1)=e^{2\pi i/24}\eta(\tau)\text{\quad and\quad}\eta(-1/\tau)=(\tau/i)^{1/2}\eta(\tau)\;.$$
Of course we have more generally
$$\eta(\ga(\tau))=v_{\eta}(\ga)(c\tau+d)^{1/2}\eta(\tau)$$
for any $\ga\in\G$, with a complicated $24$th root of unity $v_{\eta}(\ga)$,
so $\eta$ is in some (reasonable) sense a modular form of weight $1/2$,
similar to the function $\th$ that we introduced at the very beginning.

\medskip

The triple product identity immediately implies the following two identities:

\begin{corollary}\label{coreta} We have
  \begin{align*}
    \eta(\tau)&=q^{1/24}\left(1+\sum_{k\ge1}(-1)^k(q^{k(3k-1)/2}+q^{k(3k+1)/2})\right)\text{\quad and}\\
    \eta(\tau)^3&=q^{1/8}\sum_{k\ge0}(-1)^k(2k+1)q^{k(k+1)/2}\;.
  \end{align*}
\end{corollary}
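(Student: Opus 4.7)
The plan is to derive both identities directly from the triple product theorem. Throughout, I will first rewrite the right-hand side of that theorem in the more symmetric bilateral form
$$\prod_{n\ge1}(1-q^n)(1-q^nu)(1-q^{n-1}/u)=\sum_{k\in\Z}(-1)^ku^kq^{k(k+1)/2}\;,$$
obtained by substituting $m=-k-1$ in the $u^{-(k+1)}$ half of the stated sum. This form makes both specializations cleaner.

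For the first identity (Euler's pentagonal number theorem), I would substitute $q\mapsto q^3$ and $u\mapsto q$ in the bilateral form. The three families of factors on the left become $(1-q^{3n})$, $(1-q^{3n+1})$, and $(1-q^{3n-4})$ for $n\ge1$; isolating the singular $n=1$ factor $(1-q^{-1})$ from the last product, the remaining factors together run exactly over all $(1-q^m)$ with $m\ge2$, giving
$$\text{LHS}=(1-q^{-1})\cdot\frac{1}{1-q}\prod_{m\ge1}(1-q^m)=-q^{-1}\prod_{m\ge1}(1-q^m)\;.$$
On the right, the exponent becomes $k+3k(k+1)/2$; after multiplying through by $-q$ and reindexing by $j=k+1$, this exponent transforms into $j(3j-1)/2$, yielding $\prod_{m\ge1}(1-q^m)=\sum_{j\in\Z}(-1)^jq^{j(3j-1)/2}$. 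Splitting the sum at $j=0$ and sending $j\mapsto-j$ in the negative half (which carries $j(3j-1)/2$ to $j(3j+1)/2$) gives Euler's theorem; multiplying by $q^{1/24}$ finishes the first formula.

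For the second identity (Jacobi's identity for $\eta^3$), I would exploit the zero at $u=1$ of the LHS. Writing $(1-1/u)=-(1-u)/u$, the bilateral triple product becomes
$$-\frac{1-u}{u}\prod_{n\ge1}(1-q^n)(1-q^nu)(1-q^n/u)=\sum_{k\in\Z}(-1)^ku^kq^{k(k+1)/2}\;.$$
Both sides vanish at $u=1$, so I divide by $(1-u)$ and take the limit. On the left, the remaining product evaluates at $u=1$ to $\prod_{n\ge1}(1-q^n)^3$, giving $-\prod(1-q^n)^3$. On the right, L'Hôpital (or equivalently $\partial_u$ at $u=1$) produces $-\sum_{k\in\Z}(-1)^kkq^{k(k+1)/2}$. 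Folding the bilateral sum by pairing $k$ with $-k-1$, which share the exponent $k(k+1)/2$ and contribute signed coefficients $(-1)^k k$ and $(-1)^k(k+1)$, collapses it to $\sum_{k\ge0}(-1)^k(2k+1)q^{k(k+1)/2}$. Multiplying by $q^{1/8}$ yields the identity for $\eta(\tau)^3$.

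The main obstacle is bookkeeping rather than ideas: for the pentagonal identity, one must chase the shift $q^3\leftrightarrow q$ through the three products, correctly handle the singular $n=1$ term producing the $(1-q^{-1})$ factor, and verify the substitution $j=k+1$ really sends $k+3k(k+1)/2$ to the pentagonal exponent. For the $\eta^3$ identity, the conceptual subtlety is recognizing that the $(1-1/u)$ factor, which made the naive $u=1$ specialization uselessly trivial, is precisely what creates a simple zero whose residue computation yields the cube of the product. Once both halves of the bilateral sum are correctly folded, the result is forced.
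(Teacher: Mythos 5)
Your proof is correct and follows essentially the same route as the paper: both identities come from specializing the Jacobi triple product, and your treatment of $\eta^3$ (divide out the factor vanishing at $u=1$ and let $u\to1$) is exactly the paper's argument. The only harmless deviation is in the pentagonal identity, where you substitute $(u,q)\mapsto(q,q^3)$ in the bilateral form and must cancel the singular factor $(1-q^{-1})$ against $(1-q)$, whereas the paper's choice $(u,q)\mapsto(1/q,q^3)$ yields $\prod_{m\ge1}(1-q^m)$ directly with no singular term; your bookkeeping (the reindexing $j=k+1$ turning $k+3k(k+1)/2$ into $j(3j-1)/2$, and the pairing $k\leftrightarrow-k-1$ giving the coefficient $(-1)^k(2k+1)$) checks out.
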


\begin{proof} In the triple product identity, replace $(u,q)$ by $(1/q,q^3)$: we
obtain
$$\prod_{n\ge1}(1-q^{3n})(1-q^{3n-1})\prod_{n\ge0}(1-q^{3n+1})=
\sum_{k\ge0}(-1)^k(q^{-k}-q^{k+1})q^{3k(k+1)/2}\;.$$
The left-hand side is clearly equal to $\eta(\tau)$, and the right-hand side
to
\begin{align*}&1-q+\sum_{k\ge1}(-1)^k(q^{k(3k+1)/2}-q^{(k+1)(3k+2)/2})\\
  &=1+\sum_{k\ge1}(-1)^kq^{k(3k+1)/2}-q+\sum_{k\ge2}(-1)^kq^{k(3k-1)/2}\;,
\end{align*}
giving the formula for $\eta(\tau)$. For the second formula, divide
the triple product identity by $1-1/u$ and make $u\to1$.\fp\end{proof}

Thus the first few terms are:
\begin{align*}
  \prod_{n\ge1}(1-q^n)&=1-q-q^2+q^5+q^7-q^{12}-q^{15}+\cdots\\
  \prod_{n\ge1}(1-q^n)^3&=1-3q+5q^3-7q^6+9q^{10}-11q^{15}+\cdots\;.
\end{align*}

The first identity was proved by L.~Euler.

\begin{exercise}\begin{enumerate}
\item Show that $24\Delta D(\eta)=\eta D(\Delta)$, and using the explicit
  Fourier expansion of $\eta$, deduce the recursion
  $$\sum_{k\in\Z}(-1)^k(75k^2+25k+2-2n)\tau\left(n-\dfrac{k(3k+1)}{2}\right)=0\;.$$
\item Similarly, from $8\Delta D(\eta^3)=\eta^3 D(\Delta)$ deduce the recursion
  $$\sum_{k\in\Z}(-1)^k(2k+1)(9k^2+9k+2-2n)\tau\left(n-\dfrac{k(k+1)}{2}\right)=0\;.$$
\end{enumerate}
\end{exercise}

\begin{exercise}\label{expoch} Define the \emph{$q$-Pochhammer symbol} $(q)_n$
by $(q)_n=(1-q)(1-q^2)\cdots(1-q^n)$.
\begin{enumerate}\item Set $f(a,q)=\prod_{n\ge1}(1-aq^n)$,
and define coefficients $c_n(q)$ by setting $f(a,q)=\sum_{n\ge0}c_n(q)a^n$.
Show that $f(a,q)=(1-aq)f(aq,q)$, deduce that $c_n(q)(1-q^n)=-q^nc_{n-1}(q)$
and finally the identity
$$\prod_{n\ge1}(1-aq^n)=\sum_{n\ge0}(-1)^na^nq^{n(n+1)/2}/(q)_n\;.$$
\item Write in terms of the Dedekind eta function the identities obtained
  by specializing to $a=1$, $a=-1$, $a=-1/q$, $a=q^{1/2}$, and $a=-q^{1/2}$.
\item Similarly, prove the identity
$$1/\prod_{n\ge1}(1-aq^n)=\sum_{n\ge0}a^nq^n/(q)_n\;,$$
  and once again write in terms of the Dedekind eta function the identities
  obtained by specializing to the same five values of $a$.
\item By multiplying two of the above identities and using the triple product
  identity, prove the identity
  $$\dfrac{1}{\prod_{n\ge1}(1-q^n)}=\sum_{n\ge0}\dfrac{q^{n^2}}{(q)_n^2}\;.$$
\end{enumerate}\end{exercise}

Note that this last series is the generating function of the \emph{partition
function $p(n)$}, so if one wants to make a table of $p(n)$ up to $n=10000$,
say, using the left-hand side would require $10000$ terms, while using the
right-hand side only requires $100$.

\subsection{Computational Aspects of the Ramanujan $\tau$ Function}

Since its introduction, the Ramanujan tau function $\tau(n)$ has fascinated
number theorists. For instance there is a conjecture due to D.~H.~Lehmer
that $\tau(n)\ne0$, and an even stronger conjecture (which would imply
the former) that for every prime $p$ we have $p\nmid\tau(p)$
(on probabilistic grounds, the latter conjecture is probably false).

To test these conjectures as well as others, it is an interesting
computational challenge to \emph{compute} $\tau(n)$ for large $n$
(because of Ramanujan's first two conjectures, i.e., Mordell's theorem
that we will prove in Section \ref{sec:ram} below, it is sufficient to compute
$\tau(p)$ for $p$ \emph{prime}).

We can have two distinct goals. The first is to compute a \emph{table} of
$\tau(n)$ for $n\le B$, where $B$ is some (large) bound. The second
is to compute \emph{individual values} of $\tau(n)$, equivalently of
$\tau(p)$ for $p$ prime.

\medskip

Consider first the construction of a \emph{table}. The use of the first
recursion given in the above exercise needs $O(n^{1/2})$ operations
per value of $\tau(n)$, hence $O(B^{3/2})$ operations in all to have a
table for $n\le B$.

However, it is well known that the \emph{Fast Fourier Transform} (FFT)
allows one to compute products of power series in essentially linear time.
Thus, using Corollary \ref{coreta}, we can directly write the power series
expansion of $\eta^3$, and use the FFT to compute its eighth power
$\eta^{24}=\Delta$. This will require $O(B\log(B))$ operations,
so is much faster than the preceding method; it is essentially optimal since
one needs $O(B)$ time simply to write the result.

\medskip

Using large computer resources, especially in memory, it is reasonable to
construct a table up to $B=10^{12}$, but not much more. Thus, the problem
of computing \emph{individual} values of $\tau(p)$ is important.
We have already seen one such method in Exercise \ref{prob1} above, which
gives a method for computing $\tau(n)$ in time $O(n^{1+\eps})$ for any
$\eps>0$.

A deep and important theorem of B.~Edixhoven, J.-M.~Couveignes, et al.,
says that it is possible to compute $\tau(p)$ in time \emph{polynomial}
in $\log(p)$, and in particular in time $O(p^{\eps})$ for any $\eps>0$.
Unfortunately this algorithm is not at all practical, and at least
for now, completely useless for us. The only practical and important
application is for the computation of $\tau(p)$ modulo some small prime
numbers $\ell$ (typically $\ell<50$, so far from being sufficient to apply
the Chinese Remainder Theorem).

However, there exists an algorithm which takes time $O(n^{1/2+\eps})$
for any $\eps>0$, so much better than the one of Exercise \ref{prob1}, and
which is very practical. It is based on the use of the Eichler--Selberg
\emph{trace formula}, together with the computation of \emph{Hurwitz
class numbers} $H(N)$ (essentially the class numbers of imaginary quadratic
orders counted with suitable multiplicity): if we set $H_3(N)=H(4N)+2H(N)$
(note that $H(4N)$ can be computed in terms of $H(N)$), then for $p$ prime
\begin{align*}\tau(p)&=28p^6-28p^5-90p^4-35p^3-1\\
  &\phantom{=}-128\sum_{1\le t<p^{1/2}}t^6(4t^4-9pt^2+7p^2)H_3(p-t^2)\;.
\end{align*}
See \cite{Coh-Str} Exercise 12.13 of Chapter 12 for details.
Using this formula and a cluster, it should be reasonable to compute $\tau(p)$
for $p$ of the order of $10^{16}$.

\subsection{Modular Functions and Complex Multiplication}

Although the terminology is quite unfortunate, we cannot change it. By
definition, a modular \emph{function} is a function $F$ from $\H$ to $\C$
which is weakly modular of weight $0$ (so that $F(\ga(\tau))=F(\tau)$, in
other words is \emph{invariant} under $\G$, or equivalently defines a function
from $\G\backslash\H$ to $\C$), meromorphic, including at $\infty$. This
last statement requires some additional explanation, but in simple terms,
this means that the Fourier expansion of $F$ has only finitely many Fourier
coefficients for negative powers of $q$: $F(\tau)=\sum_{n\ge n_0}a(n)q^n$,
for some (possibly negative) $n_0$.

A trivial way to obtain modular functions is simply to take the quotient
of two modular forms having the same weight. The most important is the
$j$-function defined by
$$j(\tau)=\dfrac{E_4^3(\tau)}{\Delta(\tau)}\;,$$
whose Fourier expansion begins by
$$j(\tau)=\dfrac{1}{q}+744+196884q+21493760q^2+\cdots$$
Indeed, one can easily prove the following theorem:

\begin{theorem}\label{thmratj} Let $F$ be a meromorphic function on $\H$. The
  following are equivalent:
  \begin{enumerate}\item $F$ is a modular function.
  \item $F$ is the quotient of two modular forms of equal weight.
  \item $F$ is a rational function of $j$.
  \end{enumerate}
\end{theorem}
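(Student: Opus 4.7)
The plan is to prove the chain $(3)\Rightarrow(2)\Rightarrow(1)\Rightarrow(3)$. The first two implications are routine. For $(3)\Rightarrow(2)$, write $F=P(j)/Q(j)$ with $P,Q\in\C[X]$ and set $d=\max(\deg P,\deg Q)$; using $j=E_4^3/\Delta$ and multiplying numerator and denominator by $\Delta^d$ gives $F$ as a ratio of two polynomials in $E_4^3$ and $\Delta$, hence of two elements of $M_{12d}(\G)$ by Corollary \ref{core4e6}. For $(2)\Rightarrow(1)$, in a ratio $f/g$ with $f,g\in M_k(\G)$ the factors $(c\tau+d)^k$ cancel, so the quotient is weakly modular of weight $0$; it is meromorphic on $\H$ as a ratio of holomorphic functions, and meromorphy at $i\infty$ follows because both $q$-expansions are power series in $q$ (with $g$ having some finite order of vanishing at $\infty$).

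For $(1)\Rightarrow(3)$, the heart of the argument, I would use the fact, derivable from Theorem \ref{thmval} applied to the weight-$12$ forms $E_4^3-c\Delta$, that $j:\G\backslash\ov{\H}\to\P^1(\C)$ is a bijection and that $j-j(\tau_0)$ vanishes at $\tau_0$ to order exactly $e_{\tau_0}$. Given a modular function $F$, its set of poles is discrete in $\G\backslash\ov{\H}$ and hence finite (the fundamental domain $\FF$ is essentially compact once $i\infty$ is adjoined); list representatives $\tau_1,\dots,\tau_n\in\FF\setminus\{i\infty\}$ with pole orders $m_i=-v_{\tau_i}(F)$. Because $F$ is $\G$-invariant, the local action of the stabilizer of $\tau_i$ (a cyclic group of order $e_{\tau_i}$, acting by rotations in a suitable local coordinate) forces $e_{\tau_i}\mid m_i$, so $M_i:=m_i/e_{\tau_i}\in\Z_{\ge0}$ and
$$G(\tau)=F(\tau)\prod_{i=1}^n\bigl(j(\tau)-j(\tau_i)\bigr)^{M_i}$$
is holomorphic throughout $\H$, with only a possible pole at $i\infty$. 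Since $j$ has a simple pole at $i\infty$ with $q$-expansion $1/q+744+\cdots$, one can choose a polynomial $R\in\C[X]$ so that $R(j)$ matches the principal part of $G$ at $i\infty$; then $G-R(j)$ lies in $M_0(\G)=\C$ by Corollary \ref{cordim}, so $G=R(j)+c$ for some $c\in\C$. Solving for $F$ exhibits it as a rational function of $j$.

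The main obstacle is the local analysis at the elliptic fixed points $i$ and $\rho$. One has to check that (a) $j-1728$ and $j$ vanish to orders exactly $2$ and $3$ respectively at $i$ and $\rho$, which one obtains by applying Theorem \ref{thmval} to $E_6^2=E_4^3-1728\Delta$ (a weight-$12$ form with total weighted valuation $1$, whose zero is forced to be at $i$ since $E_6(i)=0$) and to $E_4^3$ (whose zero at $\rho$ has $v_\rho/e_\rho=1$); and (b) the valuation $v_{\tau_0}(F)$ of any $\G$-invariant meromorphic function at an elliptic point $\tau_0$ is divisible by $e_{\tau_0}$, which follows from writing the stabilizer action in a local uniformizer around $\tau_0$ as multiplication by a primitive $e_{\tau_0}$-th root of unity, forcing all nonzero Laurent coefficients of $F$ to sit in degrees divisible by $e_{\tau_0}$. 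With these two ingredients the cancellation in the definition of $G$ is exact, and the rest of the argument is formal.
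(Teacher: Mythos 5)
Your argument is correct. Note that the paper itself does not prove Theorem \ref{thmratj} (it is stated with ``one can easily prove''), so there is nothing to compare against; what you give is the standard proof, and your two auxiliary facts --- that $j-j(\tau_0)$ vanishes to order exactly $e_{\tau_0}$ (via the valence formula for $\Delta\,(j-c)=E_4^3-c\Delta$, together with $v_i(E_6)=v_\rho(E_4)=1$) and that the valuation of a $\G$-invariant meromorphic function at an elliptic point is divisible by $e_{\tau_0}$ (rotation by a primitive $e_{\tau_0}$-th root of unity in the local coordinate) --- are both justified correctly, and are consistent with Exercise \ref{exj}. As a minor remark, the divisibility step is not actually needed: taking $M_i=\lceil m_i/e_{\tau_i}\rceil$ suffices, since extra zeros of $G$ on $\H$ are harmless for the rest of the argument.
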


\begin{exercise}\label{exj} \begin{enumerate}
\item Noting that Theorem \ref{thmval} is valid more generally for modular
  functions (with $v_{\tau}(f)=-r<0$ if $f$ has a pole of order $r$ at $\tau$)
  and using the specific properties of $j(\tau)$, compute
  $v_{\tau}(f)$ for the functions $j(\tau)$, $j(\tau)-1728$, and $D(j)(\tau)$,
  at the points $\rho=e^{2\pi i/3}$, $i$, $i\infty$, and $\tau_0$ for $\tau_0$
  distinct from these three special points.
\item Set $f=f(a,b,c)=D(j)^a/(j^b(j-1728)^c)$. Show that $f$ is a modular
  \emph{form} if and only if $2c\le a$, $3b\le 2a$, and $b+c\ge a$, and give
  similar conditions for $f$ to be a \emph{cusp form}.
\item Show that $E_4=f(2,1,1)$, $E_6=f(3,2,1)$, and $\Delta=f(6,4,3)$, so
  that for instance $D(j)=-E_{14}=-E_4^2E_6/\Delta$.
\end{enumerate}
\end{exercise}

An important theory linked to modular functions is the theory of
\emph{complex multiplication}, which deserves a course in itself. We simply
mention one of the basic results.

We will say that a complex number $\tau\in\H$ is a CM point (CM for Complex
Multiplication) if it belongs to an imaginary quadatic field, or equivalently
if there exist integers $a$, $b$, and $c$ with $a\ne0$ such that
$a\tau^2+b\tau+c=0$. The first basic theorem is the following:

\begin{theorem} If $\tau$ is a CM point then $j(\tau)$ is an algebraic
  integer.\end{theorem}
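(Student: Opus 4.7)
The plan is to exhibit $j(\tau)$ as a root of a monic polynomial with integer coefficients, namely a specialization of the classical modular polynomial $\Phi_N(X,Y)$.

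First I would observe that a CM point $\tau$ admits a non-scalar integer matrix fixing it. Indeed, if $a\tau^2+b\tau+c=0$ with $a\neq 0$, then $\Z+\Z\tau$ is (contained in) an order in the imaginary quadratic field $\Q(\tau)$, and this order contains an element $\alpha\notin\Z$. Writing $\alpha\cdot 1 = A+B\tau$ and $\alpha\cdot\tau = C+D\tau$ for integers $A,B,C,D$, the matrix $M=\psmm{D}{C}{B}{A}$ has $\det M = N(\alpha)$ (the norm, a positive integer) and satisfies $\tau = (D\tau+C)/(B\tau+A) = M(\tau)$ as a Möbius transformation. Choosing $\alpha$ so that $N=N(\alpha)$ is not a perfect square (e.g., take $\alpha$ to be an appropriate quadratic integer in the order), we obtain $M\in M_2^+(\Z)$ with $\det M = N$, $N$ not a square, and $M(\tau)=\tau$.

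Next I would introduce the modular polynomial. Let $\M_N$ denote the set of matrices in $M_2^+(\Z)$ of determinant $N$, and consider a system of right coset representatives $\{M_i\}$ for $\G\backslash\M_N$; this is a finite set. Form
$$\Phi_N(X,\tau) = \prod_i (X - j(M_i\tau))\;.$$
The coefficients are symmetric polynomials in the $j(M_i\tau)$, and standard arguments (the set $\{M_i\tau\}$ is permuted up to $\G$-action by any $\ga\in\G$, and each $j(M_i\tau)$ has a $q^{1/N}$-expansion) show that these coefficients are holomorphic modular functions on $\G$ with $q$-expansions in $\Z[[q,q^{-1}]]$. By Theorem \ref{thmratj} they are polynomials in $j$ with integer coefficients, so $\Phi_N(X,Y)\in\Z[X,Y]$. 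Because $M\in\M_N$ satisfies $M(\tau)=\tau$, one of the factors vanishes at $X=j(\tau)$, so
$$\Phi_N(j(\tau),j(\tau)) = 0\;.$$

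Finally I would invoke the classical fact that when $N$ is not a perfect square, the one-variable polynomial $\Phi_N(X,X)\in\Z[X]$ has leading coefficient $\pm 1$. This is proved by computing the $q$-expansion of each factor $j(\tau)-j(M_i\tau)$: writing the triangular representatives $M_i=\psmm{a}{b}{0}{d}$ with $ad=N$, $0\le b<d$, the leading term comes from comparing $q^{-1}$ against $\zeta^{b}q^{-a/d}$, and the product of these leading terms telescopes (via the identity $\prod_{ad=N}\prod_{b<d}(1-\zeta_d^b q^{(a/d)-1}) \cdot (\text{monomial}) = \pm 1$) to yield a unit leading coefficient precisely when $a/d\neq 1$ for all divisor pairs, i.e., when $N$ is not a square. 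Granting this, $j(\tau)$ is a root of a monic integer polynomial, hence an algebraic integer.

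The main obstacle is the last step: showing that $\Phi_N(X,X)$ is (up to sign) monic in $X$ when $N$ is not a square. Establishing $\Phi_N\in\Z[X,Y]$ is a routine $q$-expansion computation combined with Theorem \ref{thmratj}, and the construction of the fixing matrix $M$ is elementary linear algebra over an imaginary quadratic order. But the leading-coefficient calculation requires the explicit triangular form of coset representatives of $\G\backslash\M_N$ and a careful bookkeeping of the leading $q$-powers across all factors; this is where the non-square hypothesis becomes essential, and where any clean proof really has to do some work.
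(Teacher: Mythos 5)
The paper itself does not prove this theorem: it only remarks that algebraicity of $j(\tau)$ is easy and integrality harder, so there is no in-text argument to compare yours against. Your proposal is the classical Kronecker--Weber route via the modular equation, and its overall architecture is sound: a CM point is fixed by an integral matrix of non-square determinant $N$; the symmetric function $\Phi_N(X,\tau)=\prod_i\bigl(X-j(M_i\tau)\bigr)$ over coset representatives of $\G\backslash\M_N$ yields $\Phi_N(X,Y)\in\Z[X,Y]$; and $\Phi_N(X,X)$ is $\pm$monic when $N$ is not a square, so $\Phi_N(j(\tau),j(\tau))=0$ exhibits $j(\tau)$ as an algebraic integer. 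You also correctly locate the real work in the monicity step.

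Three places still need more than a citation. First, the existence of $\alpha$ in the multiplier order with non-square norm deserves a sentence: with $a\tau^2+b\tau+c=0$ you can take $\alpha=a\tau+m$, whose norm $m^2-bm+ac$ is a quadratic in $m$ with negative discriminant $b^2-4ac$, hence not a square of a linear polynomial and therefore not a perfect square for suitable $m$. Second, Theorem \ref{thmratj} only gives that each coefficient of $\Phi_N(X,\tau)$ is a \emph{rational} function of $j$; to get a \emph{polynomial with integer coefficients} you need (i) the Galois/permutation argument showing the $q$-expansion coefficients, a priori in $\Z[\zeta_N]$, are fixed by $\mathrm{Gal}(\Q(\zeta_N)/\Q)$, and (ii) the standard pole-at-infinity induction: a $\G$-invariant function holomorphic on $\H$ whose $q$-expansion lies in $\Z((q))$ is an integer polynomial in $j$, using that $j=q^{-1}+744+\cdots$ has integral coefficients. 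Third, your ``telescoping identity'' for the leading coefficient is garbled as written; the clean statement is that for $M_i=\psmm{a}{b}{0}{d}$ with $ad=N$ and $a\ne d$ (guaranteed since $N$ is not a square), the factor $j(\tau)-j(M_i\tau)$ has leading $q$-term either $q^{-1}$ or $-\zeta_d^{-b}q^{-a/d}$, so the leading coefficient of the full product is a root of unity; since it is also the (integer) leading coefficient of $\Phi_N(X,X)$, it equals $\pm1$. With these points filled in, your argument is complete and is the standard proof of the statement the paper leaves unproved.
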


Note that this theorem has two parts: the first and most important part is
that $j(\tau)$ is algebraic. This is in fact easy to prove. The second part is
that it is an algebraic \emph{integer}, and this is more difficult.
Since any modular function $f$ is a rational function of $j$, it follows that
if this rational function has algebraic coefficients then $f(\tau)$ will be
algebraic (but not necessarily integral). Another immediate consequence is
the following:

\begin{corollary} Let $\tau$ be a CM point and define $\Om_\tau=\eta(\tau)^2$,
  where $\eta$ is as usual the Dedekind eta function. For any modular form
  $f$ of weight $k$ (in fact $f$ can also be meromorphic) the number
  $f(\tau)/\Om_\tau^k$ is algebraic. In fact $E_4(\tau)/\Om_\tau^4$ and
  $E_6(\tau)/\Om_\tau^6$ are always algebraic \emph{integers}.
  \end{corollary}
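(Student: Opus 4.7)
The plan is to reduce everything to the previous theorem (that $j(\tau)$ is an algebraic integer at a CM point) by exploiting two identities: $\eta^{24}=\Delta$ and $1728\Delta=E_4^3-E_6^2$, together with Corollary~\ref{core4e6} which gives $M_*(\G)=\C[E_4,E_6]$. Write $\om=\Om_\tau=\eta(\tau)^2$, so that $\om^{12}=\eta(\tau)^{24}=\Delta(\tau)$. Note that $\om\ne 0$, because the product expansion of $\eta$ shows $\eta$ has no zeros on $\H$.

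First I would handle $E_4$. By definition $j=E_4^3/\Delta$, so
$$\left(\dfrac{E_4(\tau)}{\om^4}\right)^3=\dfrac{E_4(\tau)^3}{\om^{12}}=\dfrac{E_4(\tau)^3}{\Delta(\tau)}=j(\tau).$$
By the previous theorem $j(\tau)$ is an algebraic integer, and since $\ov{\Z}$ is integrally closed in $\ov{\Q}$ any cube root of an algebraic integer is an algebraic integer; thus $E_4(\tau)/\om^4\in\ov{\Z}$. For $E_6$, the identity $\Delta=(E_4^3-E_6^2)/1728$ (from Corollary~\ref{core4e6} after matching the leading Fourier coefficients) gives $E_6^2/\Delta=j-1728$, whence
$$\left(\dfrac{E_6(\tau)}{\om^6}\right)^2=j(\tau)-1728\in\ov{\Z},$$
and the same integral closedness argument yields $E_6(\tau)/\om^6\in\ov{\Z}$.

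Third, for a general modular form $f\in M_k(\G)$ with algebraic Fourier coefficients, Corollary~\ref{core4e6} writes $f=\sum c_{a,b}E_4^aE_6^b$ uniquely over monomials with $4a+6b=k$; since the basis monomials have rational Fourier coefficients, solving the resulting finite linear system for the $c_{a,b}$ keeps them in $\ov{\Q}$. Dividing through by $\om^k=\om^{4a+6b}$ and grouping,
$$\dfrac{f(\tau)}{\om^k}=\sum c_{a,b}\left(\dfrac{E_4(\tau)}{\om^4}\right)^a\left(\dfrac{E_6(\tau)}{\om^6}\right)^b,$$
which is a $\ov{\Q}$-linear combination of products of algebraic numbers, hence algebraic. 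For $f$ meromorphic, write $f=g/h$ with $g,h$ holomorphic modular forms (with algebraic coefficients) of appropriate weights and apply the holomorphic case.

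The main obstacle is mild but must be made explicit: the passage from ``$j(\tau)\in\ov{\Z}$'' to ``$E_4(\tau)/\om^4\in\ov{\Z}$'' uses integral closedness of $\ov{\Z}$, and one must implicitly assume $f$ has algebraic Fourier coefficients (otherwise the ratio can be rescaled by an arbitrary transcendental and the statement fails trivially). Everything else is straightforward bookkeeping with the structure theorem for $M_*(\G)$.
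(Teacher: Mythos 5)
Your argument is correct, and the paper in fact gives no proof at all: the corollary is stated as an ``immediate consequence'' of the theorem that $j(\tau)$ is an algebraic integer, so the intended route is exactly the kind of bookkeeping you carry out. Your treatment of the integrality statements is the natural one: $\bigl(E_4(\tau)/\Om_\tau^4\bigr)^3=j(\tau)$ and $\bigl(E_6(\tau)/\Om_\tau^6\bigr)^2=j(\tau)-1728$, plus integral closedness of $\ov{\Z}$, and you are right to make explicit the (tacit) hypothesis that $f$ has algebraic Fourier coefficients, without which the general statement is empty. One small comparison worth recording: for the general (and in particular the meromorphic) case, instead of invoking the structure theorem $M_*(\G)=\C[E_4,E_6]$ and then separately writing a meromorphic $f$ as $g/h$, one can argue in one stroke that $f^{12}/\Delta^{k}$ is a modular \emph{function}, hence by Theorem \ref{thmratj} a rational function of $j$ with algebraic coefficients, so that $\bigl(f(\tau)/\Om_\tau^{k}\bigr)^{12}=f^{12}(\tau)/\Delta(\tau)^{k}$ is algebraic and therefore so is $f(\tau)/\Om_\tau^{k}$; this matches the sentence preceding the corollary in the paper and avoids the only slightly glossed-over point in your write-up, namely that in the decomposition $f=g/h$ one must also arrange $h(\tau)\ne0$ (and that $f(\tau)$ is defined at all). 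Apart from that minor point, everything checks out.
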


But the importance of this theorem lies in algebraic number theory. We give the
following theorem without explaining the necessary notions:

\begin{theorem} Let $\tau$ be a CM point, and $D=b^2-4ac$ its
  \emph{discriminant}, where we choose $\gcd(a,b,c)=1$. Then $\Q(j(\tau))$
  is the \emph{ring class field} of discriminant $D$, and in particular if
  $D$ is the discriminant of a quadratic field $K=\Q(\sqrt{D})$, then
  $K(j(\tau))$ is the \emph{Hilbert class field} of $K$. In particular,
  the degree of the minimal polynomial of the algebraic integer $j(\tau)$ is
  equal to the \emph{class number} $h(D)$ of the order of discriminant $D$.
\end{theorem}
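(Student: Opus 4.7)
The plan is to interpret $j(\tau)$ as the $j$-invariant of an elliptic curve with complex multiplication and then invoke the main theorem of complex multiplication, which identifies the Galois action on CM $j$-values with the ideal class group action. First I would fix the order $\mathcal{O}$ of discriminant $D$ inside $K=\Q(\sqrt{D})$; by the condition $\gcd(a,b,c)=1$ the lattice $\Lambda_\tau=\Z\tau+\Z$ is a \emph{proper} $\mathcal{O}$-module, and the complex torus $E_\tau=\C/\Lambda_\tau$ is an elliptic curve (via the Weierstrass parametrization seen earlier) whose endomorphism ring is exactly $\mathcal{O}$. Two such lattices give isomorphic elliptic curves, equivalently the same $j$-invariant, if and only if they are homothetic, i.e. differ by multiplication by an element of $K^\times$; so the set of $j$-values attached to proper $\mathcal{O}$-modules is in natural bijection with the proper ideal class group $\mathrm{Cl}(\mathcal{O})$, giving $h(D)$ distinct values $j(\tau_1),\dots,j(\tau_h)$.

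Next I would show that $j(\tau)$ is fixed by a large subgroup of $\mathrm{Gal}(\overline\Q/K)$. The previous theorem already provides that $j(\tau)$ is an algebraic integer, so $K(j(\tau))$ is some finite extension of $K$. The main theorem of complex multiplication asserts that for any idele class $\sigma\in\mathrm{Gal}(K^{\mathrm{ab}}/K)$ corresponding under the Artin reciprocity map to an ideal class $[\mathfrak a]\in\mathrm{Cl}(\mathcal{O})$, one has $j(\Lambda)^{\sigma}=j(\mathfrak a^{-1}\Lambda)$. In particular the Galois orbit of $j(\tau)$ over $K$ is exactly the set $\{j(\tau_1),\dots,j(\tau_h)\}$, the action is simply transitive, and the stabilizer of $j(\tau)$ is the subgroup corresponding via class field theory to $\mathrm{Cl}(\mathcal{O})$ itself. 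This simultaneously shows that $[K(j(\tau)):K]=h(D)$ and that $K(j(\tau))/K$ is the abelian extension whose Galois group is canonically $\mathrm{Cl}(\mathcal{O})$, which is precisely the ring class field of $\mathcal{O}$; when $\mathcal{O}=\mathcal{O}_K$ this is the Hilbert class field. Descending to $\Q$, the $h(D)$ $j$-values form a full Galois orbit of a single irreducible polynomial over $\Q$ (the Hilbert class polynomial), giving $[\Q(j(\tau)):\Q]=h(D)$ as well.

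The hard part is clearly the main theorem of complex multiplication itself. I would not re-prove it from scratch; the standard argument goes through reduction of $E_\tau$ modulo a prime $\mathfrak p$ of good ordinary reduction, identification of the Frobenius endomorphism with an element of $\mathcal{O}$ of norm $N(\mathfrak p)$, and Deuring's lifting theorem to transport this mod-$\mathfrak p$ action back to the action of Artin symbols on the geometric object $E_\tau$ and hence on its $j$-invariant. The verification that $[\mathfrak a]\mapsto j(\mathfrak a^{-1}\Lambda)$ really is the Artin action is the nontrivial core, relying on the fact that the isogeny $E_\Lambda\to E_{\mathfrak a^{-1}\Lambda}$ reduces to a power of Frobenius modulo almost all primes. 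Once that deep input is granted, the simple transitivity of the class group action on CM $j$-invariants and the bijection with $\mathrm{Cl}(\mathcal{O})$ established in the first step combine immediately to give all three assertions of the theorem.
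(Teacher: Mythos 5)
The paper itself states this theorem without proof (it explicitly declines to explain the necessary notions, referring the reader to the theory of complex multiplication), so there is no argument of the author's to compare yours against; what you have written is a correct outline of the standard proof, as found for instance in Cox or in Silverman's advanced volume. Your reduction of the problem is sound: properness of $\Z\tau+\Z$ as a module over the order $\mathcal O$ of discriminant $D$ (this is exactly where $\gcd(a,b,c)=1$ enters), the bijection between homothety classes of proper $\mathcal O$-lattices and $\mathrm{Cl}(\mathcal O)$ giving $h(D)$ values of $j$, and the main theorem of complex multiplication identifying the Artin action of $\mathrm{Cl}(\mathcal O)$ on these values with $[\mathfrak a]\colon j(\Lambda)\mapsto j(\mathfrak a^{-1}\Lambda)$, acting simply transitively — together these do yield $[K(j(\tau)):K]=h(D)$ and the identification of $K(j(\tau))$ with the ring class field (Hilbert class field when $\mathcal O=\Z_K$). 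Be aware, of course, that the entire depth of the theorem is concentrated in the black-boxed main theorem of CM (reduction at ordinary primes, Frobenius, Deuring lifting), which you correctly flag but do not prove. The one step you pass over too quickly is the descent to $\Q$: to get $[\Q(j(\tau)):\Q]=h(D)$ you should say that any $\sigma\in\mathrm{Gal}(\ov{\Q}/\Q)$ sends $j(\tau_i)$ to the $j$-invariant of a conjugate curve with the same endomorphism ring $\mathcal O$, hence permutes the set $\{j(\tau_1),\dots,j(\tau_h)\}$; therefore the class polynomial $\prod_i(X-j(\tau_i))$ lies in $\Q[X]$, so $[\Q(j(\tau)):\Q]\le h$, while $[\Q(j(\tau)):\Q]\ge[K(j(\tau)):K]=h$, forcing equality and irreducibility over $\Q$. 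With that line added, your sketch is a faithful account of the standard argument that the paper chose to omit.
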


Examples:

\begin{align*}
j((1+i\sqrt3)/2)&=0=1728-3(24)^2\\ 
j(i)&=1728=12^3=1728-4(0)^2\\
j((1+i\sqrt7)/2)&=-3375=(-15)^3=1728-7(27)^2\\
j(i\sqrt2)&=8000=20^3=1728+8(28)^2\\
j((1+i\sqrt{11})/2)&=-32768=(-32)^3=1728-11(56)^2\\
j((1+i\sqrt{163})/2)&=-262537412640768000=(-640320)^3\\
&=1728-163(40133016)^2\\
j(i\sqrt3)&=54000=2(30)^3=1728+12(66)^2\\
j(2i)&=287496=(66)^3=1728+8(189)^2\\
j((1+3i\sqrt3)/2)&=-12288000=-3(160)^3=1728-3(2024)^2\\
j((1+i\sqrt{15})/2)&=\dfrac{-191025-85995\sqrt5}{2}\\
&=\dfrac{1-\sqrt5}{2}\left(\dfrac{75+27\sqrt5}{2}\right)^3=
1728-3\left(\dfrac{273+105\sqrt5}{2}\right)^2
\end{align*}

Note that we give the results in the above form since it can be shown that
the functions $j^{1/3}$ and $(j-1728)^{1/2}$ also have interesting arithmetic
properties.

The example with $D=-163$ is particularly spectacular:

\begin{exercise} Using the above table, show that
  $$(e^{\pi\sqrt{163}}-744)^{1/3}=640320-\eps\;,$$
  with $0<\eps<10^{-24}$, and more precisely that $\eps$ is approximately
  equal to $65628e^{-(5/3)\pi\sqrt{163}}$ (note that $65628=196884/3$).
\end{exercise}

\begin{exercise}\begin{enumerate}
  \item Using once again the example of $163$, compute heuristically
  a few terms of the Fourier expansion of $j$ assuming that it is of the
  form $1/q+\sum_{n\ge0}c(n)q^n$ with $c(n)$ reasonably small integers
  using the following method. Set $q=-e^{-\pi\sqrt{163}}$, and let
  $J=(-640320)^3$ be the exact value of $j((-1+i\sqrt{163})/2)$.
  By computing $J-1/q$, one notices that the result is very close to $744$,
  so we guess that $c(0)=744$. We then compute $(J-1/q-c(0))/q$ and note
  that once again the result is close to an integer, giving $c(1)$, and so
  on. Go as far as you can with this method.
\item Do the same for $67$ instead of $163$. You will find the same
  Fourier coefficients (but you can go less far).
\item On the other hand, do the same for $58$, starting with $J$ equal to
  the integer close to $e^{\pi\sqrt{58}}$. You will find a \emph{different}
  Fourier expansion: it corresponds in fact to another modular function,
  this time defined on a subgroup of $\G$, called a \emph{Hauptmodul}.
\item Try to find other rational numbers $D$ such that $e^{\pi\sqrt{D}}$
  is close to an integer, and do the same exercise for them (an example
  where $D$ is not integral is $89/3$).
\end{enumerate}\end{exercise}

\subsection{Derivatives of Modular Forms}\label{sec:deriv}

If we differentiate the modular equation
$f((a\tau+b)/(c\tau+d))=(c\tau+d)^kf(\tau)$ with $\psmm{a}{b}{c}{d}\in\G$
using the operator $D=(1/(2\pi i))d/d\tau$ (which gives simpler formulas than
$d/d\tau$ since $D(q^n)=nq^n$), we easily obtain
$$D(f)\left(\dfrac{a\tau+b}{c\tau+d}\right)=(c\tau+d)^{k+2}\left(D(f)(\tau)+\dfrac{k}{2\pi i}\dfrac{c}{c\tau+d}f(\tau)\right)\;.$$
Thus the derivative of a weakly modular form of weight $k$ looks like one
of weight $k+2$, except that there is an extra term. This term vanishes if
$k=0$, so the derivative of a modular function of weight $0$ is indeed
modular of weight $2$ (we have seen above the example of $j(\tau)$ which
satisfies $D(j)=-E_{14}/\Delta$).

If $k>0$ and we really want a true weakly modular form of weight $k+2$
there are two ways to do this. The first one is called the
\emph{Serre derivative}:

\begin{exercise} Using Proposition \ref{prop:E2}, show that if $f$ is
  weakly modular of weight $k$ then $D(f)-(k/12)E_2f$ is weakly modular
  of weight $k+2$. In particular, if $f\in M_k(\G)$ then
  $SD_k(f):=D(f)-(k/12)E_2f\in M_{k+2}(\G)$.\end{exercise}

The second method is to set $D^*(f):=D(f)-(k/(4\pi\Im(\tau)))f$ since by
Proposition \ref{prop:E2} we have $D^*(f)=SD_k(f)-(k/12)E_2^*f$. This loses
holomorphy, but is very useful in certain contexts.

Note that if more than one modular form is involved, there are more ways
to make new modular forms using derivatives:

\begin{exercise}\label{ex:RC}\begin{enumerate}
\item For $i=1$, $2$ let $f_i\in M_{k_i}(\G)$. By considering the
  modular function $f_1^{k_2}/f_2^{k_1}$ of weight $0$, show that
  $$k_2f_2D(f_1)-k_1f_1D(f_2)\in S_{k_1+k_2+2}(\G)\;.$$
  Note that this generalizes Exercise \ref{prob1}.
\item Compute constants $a$, $b$, and $c$ (depending on $k_1$ and $k_2$ and
  not all $0$) such that
  $$[f_1,f_2]_2=aD^2(f_1)+bD(f_1)D(f_2)+cD^2(f_2)\in S_{k_1+k_2+4}(\G)\;.$$
\end{enumerate}\end{exercise}

This gives the first two of the so-called \emph{Rankin--Cohen} brackets.

\smallskip

As an application of derivatives of modular forms, we give a proof of a
theorem of Siegel. We begin by the following:

\begin{lemma} Let $a$ and $b$ be nonnegative integers such that $4a+6b=12r+2$.
The constant term of the Fourier expansion of $F_r(a,b)=E_4^aE_6^b/\Delta^r$
vanishes.
\end{lemma}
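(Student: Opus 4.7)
I would use the fact that $F_r(a,b)$ has weight $4a+6b-12r=2$, so that $\omega=F_r(a,b)(\tau)\,d\tau$ is a $\G$-invariant holomorphic differential on $\H$: the weight-$2$ automorphy factor $(c\tau+d)^2$ is exactly cancelled by $d(\ga(\tau))/d\tau=(c\tau+d)^{-2}$, and $\Delta$ has no zeros on $\H$ so $F_r(a,b)$ itself is holomorphic on $\H$. The plan is then to apply Cauchy's theorem to $\omega$ on a truncated fundamental domain and read off the constant term from the top edge.

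Concretely, fix $Y$ large and set $\FF_Y=\FF\cap\{\Im\tau\le Y\}$. Since $\omega$ is holomorphic on a neighborhood of $\FF_Y$, Cauchy gives $\oint_{\partial\FF_Y}\omega=0$. The boundary $\partial\FF_Y$, traversed counterclockwise, has four pieces: the two vertical sides $\Re\tau=\pm 1/2$, the bottom arc $|\tau|=1$ from $\rho$ to $\rho+1$, and the top segment at $\Im\tau=Y$. The two vertical segments are identified by $T:\tau\mapsto\tau+1$ with opposite orientations, so $T$-invariance of $\omega$ makes their contributions cancel. Similarly, $S:\tau\mapsto-1/\tau$ fixes $i$, exchanges $\rho$ and $\rho+1$, and therefore swaps the two halves of the bottom arc with reversed orientation; $S$-invariance of $\omega$ then cancels the arc contributions.

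What remains is the top edge. Using the Fourier expansion $F_r(a,b)=\sum_{n\ge -r}c_n q^n$ together with orthogonality of the characters $e^{2\pi inx}$,
$$\int_{1/2+iY}^{-1/2+iY}F_r(a,b)(\tau)\,d\tau=-\int_{-1/2}^{1/2}F_r(a,b)(x+iY)\,dx=-c_0\;,$$
and Cauchy forces this to vanish, giving $c_0=0$, which is exactly the claim.

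The only delicate step is the orientation bookkeeping on the pairs of symmetry-identified boundary pieces (the verticals paired by $T$, and the two halves of the arc paired by $S$). Once that is settled, the argument is just Cauchy applied to a holomorphic $1$-form on a compact region, using the weight-$2$ hypothesis only through $\G$-invariance of $\omega$. Conceptually this is nothing more than the standard fact that a weight-$2$ weakly holomorphic modular form on $\G$, viewed as a meromorphic differential on $X(1)\simeq\P^1$, has residue sum zero and its only pole is at the cusp.
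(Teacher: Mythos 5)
Your argument is correct, but it is a genuinely different route from the one in the paper. You treat the weight-$2$ hypothesis analytically: it makes $\omega=F_r(a,b)\,d\tau$ a $\G$-invariant holomorphic $1$-form on $\H$ (holomorphic because $\Delta$ is nonvanishing there), and Cauchy's theorem on the truncated domain $\FF_Y$, with the $T$-identified vertical sides and the $S$-identified halves of the unit-circle arc cancelling, leaves only the top edge, which by orthogonality equals $-c_0$; note that since the integrand is holomorphic at $i$ and $\rho$ there is none of the usual fuss at the elliptic points that plagues the valence-formula computation. The paper instead argues algebraically on $q$-expansions: it shows by a double induction (on $r$, then on $b$) that $F_r(a,b)=D(G_r(a,b))$ for an explicit weight-$0$ modular function $G_r(a,b)$ built from powers of $j$, using $D(j)=-E_4^2E_6/\Delta$ and the identity $E_4^3-E_6^2=1728\Delta$, and the constant term of any $D(G)$ vanishes trivially since $D(q^n)=nq^n$. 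Your proof is shorter, avoids the induction and the explicit identities, and immediately yields the general statement you mention (any weakly holomorphic weight-$2$ form on $\G$, viewed as a meromorphic differential on the modular curve, has vanishing constant term, i.e.\ residue zero at the cusp); the paper's proof buys an explicit primitive of $F_r(a,b)$ as a polynomial in $j$, which is purely formal, needs no convergence or orientation bookkeeping, and fits the computational spirit in which the lemma is then used for Siegel's theorem. The only points to keep tidy in your version are the ones you flag yourself: the orientation of the $S$-paired arc halves (which works precisely because the weight is $2$, so $F(-1/\tau)\,d(-1/\tau)=F(\tau)\,d\tau$) and the term-by-term integration of the Fourier series on the top edge, both routine.
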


\begin{proof} By assumption $F_r(a,b)$ is a meromorphic modular form of weight
$2$. Since $D(\sum_{n\ge n_0}a(n)q^n)=\sum_{n\ge n_0}na(n)q^n$, it is
sufficient to find a modular function $G_r(a,b)$ of weight $0$ such that
$F_r(a,b)=D(G_r(a,b))$ (recall that the derivative of a modular function of
weight $0$ is still modular). We prove this by an induction first on $r$, then
on $b$. Recall that by Exercise \ref{exj} we have
$D(j)=-E_{14}/\Delta=-E_4^2E_6/\Delta$, and since $4a+6b=14$ has only the
solution $(a,b)=(2,1)$ the result is true for $r=1$. Assume it is true for
$r-1$. We now do a recursion on $b$, noting that since $2a+3b=6r+1$, $b$ is
odd. Note that $D(j^r)=rj^{r-1}D(j)=-rE_4^{3r-1}E_6/\Delta^r$, so the constant
term of $F_r(a,1)$ indeed vanishes. However, since $E_4^3-E_6^2=1728\Delta$,
if $a\ge3$ we have
$$F_r(a-3,b+2)=E_4^{a-3}E_6^b(E_4^3-1728\Delta)/\Delta^r
=F_r(a,b)-1728F_{r-1}(a-3,b)\;,$$
proving that the result is true for $r$ by induction on $b$ since we assumed
it true for $r-1$.\fp\end{proof}

We can now prove (part of) Siegel's theorem:

\begin{theorem}\label{thmsieg} For $r=\dim(M_k(\G))$ define coefficients
  $c_i^k$ by $$\dfrac{E_{12r-k+2}}{\Delta^r}=\sum_{i\ge-r}c_i^kq^i\;,$$
  where by convention we set $E_0=1$. Then for any
  $f=\sum_{n\ge0}a(n)\in M_k(\G)$ we have the relation
  $$\sum_{0\le n\le r}c_{-n}^ka(n)=0\;.$$
  In addition we have $c_0^k\ne0$, so that
  $a(0)=\sum_{1\le n\le r}(c_{-n}^k/c_0^k)a(n)$ is a linear combination
  with \emph{rational coefficients} of the $a(n)$ for $1\le n\le r$.
\end{theorem}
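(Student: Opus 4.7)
The plan is to introduce the auxiliary function
$$F(\tau) := f(\tau)\,\frac{E_{12r-k+2}(\tau)}{\Delta(\tau)^r}\qquad(\text{with the convention } E_0=1),$$
compute its constant Fourier coefficient in two different ways, and equate the results.  A direct weight count gives $\mathrm{wt}(F)=k+(12r-k+2)-12r=2$, and since $\Delta$ has no zeros on $\H$, the function $F$ is holomorphic on $\H$ with a possible pole only at the cusp.  Note that the degenerate case $12r-k+2=2$ does not occur: it would force $k=12r$, but then Corollary \ref{cordim} would give $r=\dim M_k(\G)=r+1$, a contradiction.

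By Corollary \ref{core4e6}, $M_*(\G)=\C[E_4,E_6]$, so both $f$ and $E_{12r-k+2}$ are polynomials in $E_4,E_6$; their product is then a weight-$(12r+2)$ polynomial, i.e.\ a $\C$-linear combination of monomials $E_4^a E_6^b$ with $4a+6b=12r+2$.  Dividing by $\Delta^r$ writes $F$ as a linear combination of the functions $F_r(a,b)=E_4^a E_6^b/\Delta^r$ appearing in the preceding lemma, each of which has vanishing constant Fourier coefficient by that lemma.  Hence the constant Fourier coefficient of $F$ also vanishes.

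On the other hand, multiplying the Fourier expansions $f=\sum_{n\ge 0}a(n)q^n$ and $E_{12r-k+2}/\Delta^r=\sum_{i\ge -r}c_i^k q^i$ and extracting the coefficient of $q^0$ yields $\sum_{n\ge 0}c_{-n}^k a(n)$; since $c_i^k=0$ for $i<-r$, this sum truncates to $0\le n\le r$.  Setting the result equal to $0$ gives the announced relation.

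For $c_0^k\ne 0$, I would combine Theorem \ref{thmval} with Corollary \ref{cordim} to show that the evaluation map $M_k(\G)\to\C^r$, $f\mapsto(a(0),\ldots,a(r-1))$, is an isomorphism.  A nonzero $f$ in the kernel would satisfy $v_{i\infty}(f)\ge r$; the case $k\not\equiv 2\pmod{12}$ contradicts $v_{i\infty}(f)\le k/12<r$, while in the case $k\equiv 2\pmod{12}$ one necessarily has $v_{i\infty}(f)=r$, whence $f/\Delta^r\in M_2(\G)=\{0\}$ and $f=0$.  Consequently there is a unique form $f_0\in M_k(\G)$ with $f_0=1+O(q^r)$, and the relation just proved, applied to $f=f_0$, reduces to $c_0^k+f_0[q^r]=0$ (using $c_{-r}^k=1$, since $E_{12r-k+2}/\Delta^r=q^{-r}+O(q^{-r+1})$).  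Thus $c_0^k\ne 0$ amounts to $f_0[q^r]\ne 0$, which is the genuinely subtle point and can be verified from the explicit description of the basis of $M_k(\G)$ built from $E_4$, $E_6$, and $\Delta$.  The derivation of the main relation is clean once one has the lemma; the non-vanishing of $c_0^k$ is the main obstacle to a fully uniform argument.
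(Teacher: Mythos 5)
Your argument for the linear relation is exactly the paper's: multiply $f$ by $E_{12r-k+2}$, invoke Corollary \ref{core4e6} to write the product as a linear combination of the $E_4^aE_6^b$ with $4a+6b=12r+2$, and apply the preceding lemma to conclude that the constant term of $fE_{12r-k+2}/\Delta^r$ vanishes; your identification of that constant term with $\sum_{0\le n\le r}c_{-n}^ka(n)$ and your exclusion of the degenerate weight-$2$ case are both fine. As for $c_0^k\ne0$, the paper itself omits the proof (referring to \cite{Coh-Str}, Theorem 9.5.1), and your reduction to the nonvanishing of the $q^r$-coefficient of the unique form $f_0=1+O(q^r)$ is correct but, as you acknowledge, stops short of actually establishing that nonvanishing.
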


\begin{proof} First note that by Corollary \ref{cordim} we have
  $r\ge(k-2)/12$ (with equality only if $k\equiv2\pmod{12}$), so the
  definition of the coefficients $c_i^k$ makes sense.
  Note also that since the Fourier expansion of $E_{12r-k+2}$ begins with
  $1+O(q)$ and that of $\Delta^r$ by $q^r+O(q^{r+1})$, that of the
  quotient begins with $q^{-r}+O(q^{1-r})$ (in particular $c_{-r}^k=1$).
  The proof of the first part is now immediate: the modular form
  $fE_{12r-k+2}$ belongs
  to $M_{12r+2}(\G)$, so by Corollary \ref{core4e6} is a linear combination
  of $E_4^aE_6^b$ with $4a+6b=12r+2$. It follows from the lemma that
  the constant term of $fE_{12r-k+2}/\Delta^r$ vanishes, and this
  constant term is equal to $\sum_{0\le n\le r}c_{-n}^ka(n)$, proving
  the first part of the theorem. The fact that $c_0^k\ne0$ (which is of
  course essential) is a little more difficult and will be omitted, see
  \cite{Coh-Str} Theorem 9.5.1.\fp\end{proof}

  This theorem has (at least) two consequences. First, a theoretical one:
  if one can construct a modular form whose constant term is some interesting
  quantity and whose Fourier coefficients $a(n)$ are rational, this shows
  that the interesting quantity is also rational. This is what allowed
  Siegel to show that the value at negative integers of Dedekind zeta functions
  of totally real number fields are rational, see Section \ref{sec:several}.
  Second, a practical one: it allows to compute explicitly the constant
  coefficient $a(0)$ in terms of the $a(n)$, giving interesting formulas,
  see again Section \ref{sec:several}.
  
\section{Hecke Operators: Ramanujan's discoveries}\label{sec:ram}

We now come to one of the most amazing and important discoveries on modular
forms due to S.~Ramanujan, which has led to the modern development of the
subject. Recall that we set
$$\Delta(\tau)=q\prod_{m\ge1}(1-q^m)^{24}=\sum_{n\ge1}\tau(n)q^n\;.$$
We have $\tau(2)=-24$, $\tau(3)=252$, and $\tau(6)=-6048=-24\cdot252$, so
that $\tau(6)=\tau(2)\tau(3)$. After some more experiments, Ramanujan
conjectured that if $m$ and $n$ are coprime we have $\tau(mn)=\tau(m)\tau(n)$.
Thus, by decomposing an integer into products of prime powers, assuming this
conjecture, we are reduced to the study of $\tau(p^k)$ for $p$ prime.

Ramanujan then noticed that $\tau(4)=-1472=(-24)^2-2^{11}=\tau(2)^2-2^{11}$,
and again after some experiments he conjectured that
$\tau(p^2)=\tau(p)^2-p^{11}$, and more generally that
$\tau(p^{k+1})=\tau(p)\tau(p^k)-p^{11}\tau(p^{k-1})$.
Thus $u_k=\tau(p^k)$ satisfies a linear recurrence relation
$$u_{k+1}-\tau(p)u_k+p^{11}u_{k-1}=0\;,$$ and since $u_0=1$ the sequence
is entirely determined by the value of $u_1=\tau(p)$. It is well-known that
the behavior of a linear recurrent sequence is determined by its
\emph{characteristic polynomial}. Here it is equal to $X^2-\tau(p)X+p^{11}$,
and the third of Ramanujan's conjectures is that the discriminant of this
equation is always negative, or equivalently that $|\tau(p)|<p^{11/2}$.

Note that if $\al_p$ and $\be_p$ are the roots of the characteristic
polynomial (necessarily distinct since we cannot have $|\tau(p)|=p^{11/2}$),
then $\tau(p^k)=(\al_p^{k+1}-\be_p^{k+1})/(\al_p-\be_p)$, and the last
conjecture says that $\al_p$ and $\be_p$ are \emph{complex conjugate},
and in particular of modulus \emph{equal} to $p^{11/2}$.

These conjectures are all true. The first two (multiplicativity and
recursion) were proved by L.~Mordell only one year after Ramanujan formulated
them, and indeed the proof is quite easy (in fact we will prove them below).
The third conjecture $|\tau(p)|<p^{11/2}$ is extremely hard, and was only
proved by P.~Deligne in 1970 using the whole machinery developed by the school
of A.~Grothendieck to solve the Weil conjectures .

The main idea of Mordell, which was generalized later by E.~Hecke, is to
introduce certain linear operators (now called Hecke operators) on spaces of
modular forms, to prove that they satisfy the multiplicativity and recursion
properties (this is in general much easier than to prove this on numbers),
and finally to use the fact that $S_{12}(\G)=\C\Delta$ is of dimension $1$, so
that necessarily $\Delta$ is an \emph{eigenform} of the Hecke operators
whose eigenvalues are exactly its Fourier coefficients.

\medskip

Although there are more natural ways of introducing them, we will define
the Hecke operator $T(n)$ on $M_k(\G)$ directly by its action on Fourier
expansions
$T(n)(\sum_{m\ge0}a(m)q^m)=\sum_{m\ge0}b(m)q^m$, where
$$b(m)=\sum_{d\mid\gcd(m,n)}d^{k-1}a(mn/d^2)\;.$$
Note that we can consider this definition as purely formal, apart from the
presence of the integer $k$ this is totally unrelated to the possible fact
that $\sum_{m\ge0}a(m)q^m\in M_k(\G)$.

A simple but slightly tedious combinatorial argument shows that these
operators satisfy
$$T(n)T(m)=\sum_{d\mid\gcd(n,m)}d^{k-1}T(nm/d^2)\;.$$
In particular if $m$ and $n$ are coprime we have $T(n)T(m)=T(nm)$
(multiplicativity), and if $p$ is a prime and $k\ge1$ we have
$T(p^k)T(p)=T(p^{k+1})+p^{k-1}T(p^{k-1})$ (recursion). This shows that
these operators are indeed good candidates for proving the first two of
Ramanujan's conjectures.

We need to show the essential fact that they preserve $M_k(\G)$ and
$S_k(\G)$ (the latter will follow from the former since by the above definition
$b(0)=\sum_{d\mid n}d^{k-1}a(0)=a(0)\sigma_{k-1}(n)=0$ if $a(0)=0$).
By recursion and multiplicativity, it is sufficient to show this for $T(p)$
with $p$ prime. Now if $F(\tau)=\sum_{m\ge0}a(m)q^m$,
$T(p)(F)(\tau)=\sum_{m\ge0}b(m)q^m$ with $b(m)=a(mp)$ if $p\nmid m$, and
$b(m)=a(mp)+p^{k-1}a(m/p)$ if $p\mid m$.

On the other hand, let us compute
$G(\tau)=\sum_{0\le j<p}F((\tau+j)/p)$. Replacing directly in the Fourier
expansion we have
$$G(\tau)=\sum_{m\ge0}a(m)q^{m/p}\sum_{0\le j<p}e^{2\pi imj/p}\;.$$
The inner sum is a complete geometric sum which vanishes unless $p\mid m$,
in which case it is equal to $p$. Thus, changing $m$ into $pm$ we have
$G(\tau)=p\sum_{m\ge0}a(pm)q^m$. On the other hand, we have trivially
$\sum_{p\mid m}a(m/p)q^m=\sum_{m\ge0}a(m)q^{pm}=F(p\tau)$.
Replacing both of these formulas in the formula for $T(p)(F)$ we see that
$$T(p)(F)(\tau)=p^{k-1}F(p\tau)+\dfrac{1}{p}\sum_{0\le j<p}F\left(\dfrac{\tau+j}{p}\right)\;.$$

\begin{exercise} Show more generally that
$$T(n)(F)(\tau)=\sum_{ad=n}a^{k-1}\dfrac{1}{d}\sum_{0\le b<d}F\left(\dfrac{a\tau+b}{d}\right)\;.$$\end{exercise}

It is now easy to show that $T(p)F$ is modular: replace $\tau$ by $\ga(\tau)$
in the above formula and make a number of elementary manipulations to prove
modularity. In fact, since $\G$ is generated by $\tau\mapsto\tau+1$ and
$\tau\mapsto-1/\tau$, it is immediate to check modularity for these two maps
on the above formula.

As mentioned above, the proof of the first two Ramanujan conjectures is
now immediate: since $T(n)$ acts on the one-dimensional space $S_{12}(\G)$
we must have $T(n)(\Delta)=c\cdot\Delta$ for some constant $c$. Replacing
in the definition of $T(n)$, we thus have for all $m$
$c\tau(m)=\sum_{d\mid \gcd(n,m)}d^{11}\tau(nm/d^2)$. Choosing $m=1$ and
using $\tau(1)=1$ shows that $c=\tau(n)$, so that
$$\tau(n)\tau(m)=\sum_{d\mid\gcd(n,m)}d^{11}\tau(nm/d^2)$$
which implies (and is equivalent to) the first two conjectures of
Ramanujan.

\smallskip

Denote by $P_k(n)$ the \emph{characteristic polynomial} of the linear map
$T(n)$ on $S_k(\G)$. A strong form of the so-called Maeda's conjecture
states that for $n>1$ the polynomial $P_k(n)$ is \emph{irreducible}. This
has been tested up to very large weights.

\begin{exercise} The above proof shows that the Hecke operators also
preserve the space of modular \emph{functions}, so by Theorem \ref{thmratj}
the image of $j(\tau)$ will be a rational function in $j$:
\begin{enumerate}\item Show for instance that
\begin{align*}T(2)(j)&=j^2/2-744j+81000\text{\quad and}\\
  T(3)(j)&=j^3/3-744j^2+356652j-12288000\;.\end{align*}
\item Set $J=j-744$, i.e., $j$ with no term in $q^0$ in its Fourier
  expansion. Deduce that
  \begin{align*}T(2)(J)&=J^2/2-196884\text{\quad and}\\
      T(3)(J)&=J^3/3-196884J-21493760\;,\end{align*}
  and observe that the coefficients that we obtain are exactly the Fourier
  coefficients of $J$.
\item Prove that $T(n)(j)$ is a \emph{polynomial} in $j$. Does the last
  observation generalize?
\end{enumerate}
\end{exercise}

\section{Euler Products, Functional Equations}

\subsection{Euler Products}

The case of $\Delta$ is quite special, in that the modular form space
to which it naturally belongs, $S_{12}(\G)$, is only $1$-dimensional.
As can easily be seen from the dimension formula, this occurs (for cusp forms)
only for $k=12$, $16$, $18$, $20$, $22$, and $26$ (there are no nonzero
cusp forms in weight $14$ and the space is of dimension $2$ in weight $24$),
and thus the evident cusp forms $\Delta E_{k-12}$ for these values of $k$
(setting $E_0=1$) are generators of the space $S_k(\G)$, so are eigenforms
of the Hecke operators and share exactly the same properties as $\Delta$, with
$p^{11}$ replaced by $p^{k-1}$.

When the dimension is greater than $1$, we must work slightly more. From
the formulas given above it is clear that the $T(n)$ form a \emph{commutative
  algebra} of operators on the finite dimensional vector space $S_k(\G)$.
In addition, we have seen above that there is a natural \emph{scalar product}
on $S_k(\G)$. One can show the not completely trivial fact that $T(n)$ is
Hermitian for this scalar product, hence in particular is diagonalizable.
It follows by an easy and classical result of linear algebra that these
operators are \emph{simultaneously diagonalizable}, i.e., there exists a basis
$F_i$ of forms in $S_k(\G)$ such that $T(n)F_i=\lambda_i(n)F_i$ for all $n$
and $i$. Identifying Fourier coefficients as we have done above for $\Delta$
shows that if $F_i=\sum_{n\ge1}a_i(n)q^n$ we have
$a_i(n)=\lambda_i(n)a_i(0)$. This implies first that $a_i(0)\ne0$, otherwise
$F_i$ would be identically zero, so that by dividing by $a_i(0)$ we can
always \emph{normalize} the eigenforms so that $a_i(0)=1$, and second,
as for $\Delta$, that $a_i(n)=\lambda_i(n)$, i.e., the eigenvalues are
exactly the Fourier coefficients. In addition, since the $T(n)$ are 
Hermitian, these eigenvalues are real for any embedding into $\C$, hence
are \emph{totally real}, in other words their minimal polynomial has only
real roots. Finally, using Theorem \ref{thmval}, it is immediate to show
that the field generated by the $a_i(n)$ is finite-dimensional over $\Q$, i.e.,
is a number field.

\begin{exercise} Consider the space $S=S_{24}(\G)$, which is the smallest
weight where the dimension is greater than $1$, here $2$. By the structure
theorem given above, it is generated for instance by $\Delta^2$ and
$\Delta E_4^3$. Compute the matrix of the operator $T(2)$ on this basis of $S$,
diagonalize this matrix, so find the \emph{eigenfunctions} of $T(2)$ on $S$
(the prime number $144169$ should occur). Check that these eigenfunctions
are also eigenfunctions of $T(3)$.
\end{exercise}

Thus, let $F=\sum_{n\ge1}a(n)q^n$ be a \emph{normalized} eigenfunction for all
the Hecke operators in $S_k(\G)$ (for instance $F=\Delta$ with $k=12$), and
consider the \emph{Dirichlet series}
$$L(F,s)=\sum_{n\ge1}\dfrac{a(n)}{n^s}\;,$$
for the moment formally, although we will show below that it converges
for $\Re(s)$ sufficiently large. The multiplicativity property of the
coefficients ($a(nm)=a(n)a(m)$ if $\gcd(n,m)=1$, coming from that of the
$T(n)$) is \emph{equivalent} to the fact that we have an \emph{Euler product}
(a product over primes)
$$L(F,s)=\prod_{p\in P}L_p(F,s)\text{\quad with\quad}L_p(F,s)=\sum_{j\ge0}\dfrac{a(p^j)}{p^{js}}\;,$$
where we will always denote by $P$ the set of prime numbers.

The additional recursion property $a(p^{j+1})=a(p)a(p^j)-p^{k-1}a(p^{j-1})$
is equivalent to the identity
$$L_p(F,s)=\dfrac{1}{1-a(p)p^{-s}+p^{k-1}p^{-2s}}$$
(multiply both sides by the denominator to check this). We have
thus proved the following theorem:

\begin{theorem} Let $F=\sum_{n\ge1}a(n)q^n\in S_k(\G)$ be an eigenfunction of
  all Hecke operators. We have an Euler product
  $$L(F,s)=\sum_{n\ge1}\dfrac{a(n)}{n^s}=\prod_{p\in P}\dfrac{1}{1-a(p)p^{-s}+p^{k-1}p^{-2s}}\;.$$
\end{theorem}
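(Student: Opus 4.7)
The plan is to reduce everything to the two structural properties of Hecke operators already established in Section \ref{sec:ram}: the composition rule
$$T(m)T(n)=\sum_{d\mid\gcd(m,n)}d^{k-1}T(mn/d^2)\;,$$
and its specializations, namely multiplicativity $T(m)T(n)=T(mn)$ when $\gcd(m,n)=1$ and the local recursion $T(p^{j+1})=T(p)T(p^j)-p^{k-1}T(p^{j-1})$ for primes $p$ and $j\ge1$. Combined with the eigenform hypothesis and the normalization $a(1)=1$, these will translate directly into the announced Euler product.

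First I would convert the operator identities into identities on Fourier coefficients. Writing $T(n)F=\la(n)F$ and reading off the coefficient of $q^1$ in the formula $b(m)=\sum_{d\mid\gcd(m,n)}d^{k-1}a(mn/d^2)$ gives $\la(n)=a(n)$ (using $a(1)=1$). Applying this to both sides of the composition rule evaluated at $F$ and looking at the coefficient of $q^1$ yields
$$a(m)a(n)=\sum_{d\mid\gcd(m,n)}d^{k-1}a(mn/d^2)\;,$$
which for coprime $m,n$ specializes to $a(mn)=a(m)a(n)$. This complete multiplicativity is formally equivalent to the factorization $L(F,s)=\prod_{p}L_p(F,s)$ with local factors $L_p(F,s)=\sum_{j\ge0}a(p^j)p^{-js}$.

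Second, I would compute each local factor by multiplying $L_p(F,s)$ by $1-a(p)p^{-s}+p^{k-1}p^{-2s}$. The constant term gives $a(1)=1$; the coefficient of $p^{-s}$ is $a(p)-a(p)a(1)=0$; and for $j\ge2$ the coefficient of $p^{-js}$ is $a(p^j)-a(p)a(p^{j-1})+p^{k-1}a(p^{j-2})$, which vanishes by the recursion $a(p^{j+1})=a(p)a(p^j)-p^{k-1}a(p^{j-1})$ obtained from the Hecke recursion by the same coefficient extraction argument. Hence the product telescopes to $1$, giving
$$L_p(F,s)=\dfrac{1}{1-a(p)p^{-s}+p^{k-1}p^{-2s}}\;,$$
as desired.

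There is no genuine obstacle here, since the deep content, namely the existence of a basis of simultaneous eigenforms and the Hecke composition law, is assumed from the preceding section. The only small matter of hygiene is to justify that the manipulations are not merely formal: a crude bound on the Fourier coefficients of a cusp form (for instance $|a(n)|=O(n^{k/2})$, which follows at once from $|F(\tau)|\Im(\tau)^{k/2}$ being bounded on $\H$ and the Fourier integral formula) ensures absolute convergence of the Dirichlet series and of the Euler product in a common right half-plane $\Re(s)>k/2+1$, where all rearrangements above are legitimate and the resulting identity between analytic functions is the one stated.
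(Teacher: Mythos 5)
Your argument is correct and follows essentially the same route as the paper: extract $\la(n)=a(n)$ and the identity $a(m)a(n)=\sum_{d\mid\gcd(m,n)}d^{k-1}a(mn/d^2)$ from the Hecke composition law applied to the normalized eigenform, observe that multiplicativity at coprime arguments is equivalent to the factorization into local factors, and that the recursion at prime powers is equivalent to each local factor equaling $(1-a(p)p^{-s}+p^{k-1}p^{-2s})^{-1}$; your convergence remark via $|a(n)|=O(n^{k/2})$ is precisely the Hecke bound the paper gives in the next subsection, the theorem itself being stated there as a formal identity. One terminological quibble: what you prove and use is ordinary multiplicativity (on coprime arguments), not complete multiplicativity, which would in fact be false here since $a(p^2)\ne a(p)^2$ in general.
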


Note that we have not really used the fact that $F$ is a cusp form: the
above theorem is still valid if $F=F_k$ is the normalized Eisenstein series
$$F_k(\tau)=-\dfrac{B_k}{2k}E_k(\tau)=-\dfrac{B_k}{2k}+\sum_{n\ge1}\sigma_{k-1}(n)q^n\;,$$
which is easily seen to be a normalized eigenfunction for all Hecke operators.
In fact:

\begin{exercise} Let $a\in\C$ be any complex number and let as usual
$\sigma_a(n)=\sum_{d\mid n}d^a$.
\begin{enumerate}\item Show that
  $$\sum_{n\ge1}\dfrac{\sigma_a(n)}{n^s}=\z(s-a)\z(s)=\prod_{p\in P}\dfrac{1}{1-\sigma_a(p)p^{-s}+p^ap^{-2s}}\;,$$
  with $\sigma_a(p)=p^a+1$.
\item Show that
  $$\sigma_a(m)\sigma_a(n)=\sum_{d\mid\gcd(m,n)}d^a\sigma_a\left(\dfrac{mn}{d^2}\right)\;,$$
  so that in particular $F_k$ is indeed a normalized eigenfunction for all
  Hecke operators.
\end{enumerate}
\end{exercise}

\subsection{Analytic Properties of $L$-Functions}

Everything that we have done up to now is purely formal, i.e., we do not need
to assume convergence. However in the sequel we will need to prove some
analytic results, and for this we need to prove convergence for certain values
of $s$. We begin with the following easy bound, due to Hecke:

\begin{proposition} Let $F=\sum_{n\ge1}a(n)q^n\in S_k(\G)$ be a cusp form
  (not necessarily an eigenform). There exists a constant $c>0$ (depending on
  $F$) such that for all $n$ we have $|a(n)|\le c n^{k/2}$.\end{proposition}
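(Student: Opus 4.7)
The plan is to use the standard argument due to Hecke, which turns the bound on Fourier coefficients into a question about the boundedness of a well-chosen $\Gamma$-invariant function on $\H$. The key observation is from Exercise \ref{ex:dmu}: if $F \in S_k(\G)$, then $|F(\tau)|^2 y^k$ is invariant under $\G$, so equivalently the function $\phi(\tau) := |F(\tau)| y^{k/2}$ (with $y = \Im(\tau)$) is $\G$-invariant.

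First I would show that $\phi$ is bounded on all of $\H$. By $\G$-invariance it is enough to bound $\phi$ on the fundamental domain $\FF$, whose imaginary part is bounded below by $\sqrt{3}/2$ and whose real part lies in $[-1/2, 1/2]$. Since $F$ is a cusp form, its Fourier expansion starts at $n=1$, so $|F(x+iy)| = O(e^{-2\pi y})$ as $y \to \infty$, uniformly in $x$. Therefore $\phi(\tau) \to 0$ as $\Im(\tau) \to \infty$ in $\FF$, and the remaining portion of $\FF$ (with $y$ bounded above) is compact, on which $\phi$ is continuous. Hence $\phi$ attains a finite maximum $M$ on $\FF$, and by invariance we get $|F(\tau)| \leq M y^{-k/2}$ for all $\tau \in \H$.

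Next I would plug this pointwise bound into the Fourier coefficient formula from the Remarks following the definition of modular forms: for any $y > 0$,
$$a(n) = e^{2\pi n y} \int_0^1 F(x+iy) e^{-2\pi i n x}\, dx,$$
so that
$$|a(n)| \leq e^{2\pi n y} \int_0^1 |F(x+iy)|\, dx \leq M\, y^{-k/2}\, e^{2\pi n y}.$$
Finally, optimize in $y$: the choice $y = 1/n$ gives $|a(n)| \leq M e^{2\pi} n^{k/2}$, so setting $c = M e^{2\pi}$ yields the desired bound.

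The only genuinely nontrivial step is the boundedness of $\phi$ on $\FF$, and more specifically the decay $\phi \to 0$ at the cusp; this is precisely where the cusp form hypothesis is used, and without it (as for Eisenstein series) one only gets $|F| = O(1)$ rather than exponential decay, which would ruin the optimization. Everything else is a routine estimation of a Fourier integral, and the factor $y^{-k/2}$ is exactly what is needed for the choice $y = 1/n$ to produce the exponent $k/2$ in the final bound.
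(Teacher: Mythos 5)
Your proof is correct and follows essentially the same route as the paper: the $\G$-invariant function $|F(\tau)|\Im(\tau)^{k/2}$, its boundedness via decay at the cusp on the fundamental domain, and the choice $y=1/n$ in the Fourier coefficient integral. Nothing to add.
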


\begin{proof} The trick is to consider the function
$g(\tau)=|F(\tau)\Im(\tau)^{k/2}|$: since
we have seen that $\Im(\ga(\tau))=\Im(\tau)/|c\tau+d|^2$, it follows that
$g(\tau)$ is \emph{invariant} under $\G$. It follows that
$\sup_{\tau\in\H}g(\tau)=\sup_{\tau\in\FF}g(\tau)$, where $\FF$ is the
fundamental domain used above. Now because of the Fourier expansion and
the fact that $F$ is a cusp form, $|F(\tau)|=O(e^{-2\pi\Im(\tau)})$ as
$\Im(\tau)\to\infty$, so $g(\tau)$ tends to $0$ also. It immediately follows
that $g$ is \emph{bounded} on $\FF$, hence on $\H$, so that there exists
a constant $c_1>0$ such that $|F(\tau)|\le c_1\Im(\tau)^{-k/2}$ for all $\tau$.

We can now easily prove Hecke's bound: from the Fourier series section we know
that for any $y>0$
$$a(n)=e^{2\pi ny}\int_0^1 F(x+iy)e^{-2\pi inx}\,dx\;,$$
so that $|a(n)|\le c_1e^{2\pi ny}y^{-k/2}$, and choosing $y=1/n$ proves the
proposition with $c=e^{2\pi}c_1$.\fp\end{proof}

The following corollary is now clear:

\begin{corollary} The $L$-function of a cusp form of weight $k$ converges
  absolutely (and uniformly on compact subsets) for $\Re(s)>k/2+1$.
\end{corollary}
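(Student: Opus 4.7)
The plan is to obtain the corollary as an essentially immediate consequence of Hecke's bound $|a(n)| \le c\, n^{k/2}$ from the preceding proposition, combined with the standard comparison with the Riemann zeta function.

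First I would write $s = \sigma + it$ and estimate termwise:
$$\left|\frac{a(n)}{n^s}\right| = \frac{|a(n)|}{n^{\sigma}} \le \frac{c\, n^{k/2}}{n^{\sigma}} = \frac{c}{n^{\sigma - k/2}}\;.$$
By comparison with $\sum_{n\ge1} n^{-(\sigma - k/2)} = \z(\sigma - k/2)$, which converges precisely when $\sigma - k/2 > 1$, we obtain absolute convergence of $L(F,s)$ on the open half-plane $\Re(s) > k/2 + 1$.

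For uniform convergence on compact subsets, I would fix a compact set $K \subset \{s : \Re(s) > k/2 + 1\}$. Since $\Re(s)$ is continuous and $K$ is compact, there exists $\sigma_0 > k/2 + 1$ with $\Re(s) \ge \sigma_0$ for all $s \in K$. Then for every $s \in K$ we have the uniform termwise majorization
$$\left|\frac{a(n)}{n^s}\right| \le \frac{c}{n^{\sigma_0 - k/2}}\;,$$
and the right-hand side is a convergent series of positive constants independent of $s$. By the Weierstrass $M$-test the series $\sum_{n\ge1} a(n)/n^s$ converges uniformly on $K$.

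There is no real obstacle here: the whole argument is a direct application of Hecke's bound and the $M$-test. The only substantive input is the proposition just proved, and the rest is standard Dirichlet series manipulation; in particular one does not need $F$ to be an eigenform or to have an Euler product for this convergence statement.
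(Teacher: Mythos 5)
Your argument is correct and is exactly the one the paper has in mind: the corollary is stated as an immediate consequence of Hecke's bound $|a(n)|\le c\,n^{k/2}$, and your termwise comparison with $\sum n^{-(\sigma-k/2)}$ together with the Weierstrass $M$-test on compact subsets is the standard way to make that ``clear'' step explicit. Nothing is missing, and you are right that no eigenform or Euler product hypothesis is needed.
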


\begin{remark} Deligne's deep result mentioned above on the third Ramanujan
conjecture implies that we have the following optimal bound: there exists
$c>0$ such that $|a(n)|\le c\sigma_0(n)n^{(k-1)/2}$, and in particular
$|a(n)|=O(n^{(k-1)/2+\eps})$ for all $\eps>0$. This implies that the
$L$-function of a cusp form converges absolutely and uniformly on compact
subsets in fact also for $\Re(s)>(k+1)/2$.
\end{remark}

\begin{exercise}. Define for all $s\in\C$ the
function $\sigma_s(n)$ by $\sigma_s(n)=\sum_{d\mid n}d^s$ if $n\in\Z_{>0}$,
$\sigma_s(0)=\zeta(-s)/2$ (and $\sigma_s(n)=0$ otherwise). Set
$$S(s_1,s_2;n)=\sum_{0\le m\le n}\sigma_{s_1}(m)\sigma_{s_2}(n-m)\;.$$
\begin{enumerate}\item Compute $S(s_1,s_2;n)$ exactly in terms of
$\sigma_{s_1+s_2+1}(n)$ for $(s_1,s_2)=(3,3)$ and $(3,5)$, and also for
$(s_1,s_2)=(1,1)$, $(1,3)$, $(1,5)$, and $(1,7)$ by using properties of the
function $E_2$.
\item Using Hecke's bound for cusp forms, show that if $s_1$ and $s_2$ are
odd positive integers the ratio $S(s_1,s_2;n)/\sigma_{s_1+s_2+1}(n)$ tends
to a limit $L(s_1,s_2)$ as $n\to\infty$, and compute this limit in terms of
Bernoulli numbers. In addition, give an estimate for the \emph{error term}
$|S(s_1,s_2;n)/\sigma_{s_1+s_2+1}(n)-L(s_1,s_2)|$.
\item Using the values of the Riemann zeta function at even positive integers
in terms of Bernoulli numbers, show that if $s_1$ and $s_2$ are odd positive
integers we have
$$L(s_1,s_2)=\dfrac{\zeta(s_1+1)\zeta(s_2+1)}{(s_1+s_2+1)\binom{s_1+s_2}{s_1}\zeta(s_1+s_2+2)}\;.$$
\item (A little project.) \emph{Define} $L(s_1,s_2)$ by the above formula for
all $s_1$, $s_2$ in $\C$ for which it makes sense, interpreting
$\binom{s_1+s_2}{s_1}$ as $\Gamma(s_1+s_2+1)/(\Gamma(s_1+1)\Gamma(s_2+1))$.
Check on a computer whether it still seems to be true that
$$S(s_1,s_2;n)/\sigma_{s_1+s_2+1}(n)\to L(s_1,s_2)\;.$$
Try to \emph{prove} it for $s_1=s_2=2$, and then for general $s_1$, $s_2$.
If you succeed, give also an estimate for the error term analogous to the one
obtained above.
\end{enumerate}\end{exercise}

\smallskip

We now do some (elementary) analysis.

\begin{proposition}\label{propga} Let $F\in S_k(\G)$. For $\Re(s)>k/2+1$ we
  have
  $$(2\pi)^{-s}\Gamma(s)L(F,s)=\int_0^{\infty}F(it)t^{s-1}\,dt\;.$$
\end{proposition}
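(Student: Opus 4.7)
The plan is to recognize the integral as the Mellin transform of $F(it)$ and to compute it term by term from the Fourier expansion. First I would substitute $\tau = it$ into $F(\tau) = \sum_{n \ge 1} a(n) q^n$ (there is no $n=0$ term since $F$ is a cusp form), giving
$$F(it) = \sum_{n \ge 1} a(n) e^{-2\pi n t}\;,$$
so that formally
$$\int_0^\infty F(it)\, t^{s-1}\, dt = \sum_{n \ge 1} a(n) \int_0^\infty e^{-2\pi n t}\, t^{s-1}\, dt\;.$$
For each individual $n \ge 1$, the substitution $u = 2\pi n t$ turns the inner integral into the standard Gamma integral
$$\int_0^\infty e^{-2\pi n t}\, t^{s-1}\, dt = (2\pi n)^{-s} \int_0^\infty e^{-u}\, u^{s-1}\, du = (2\pi)^{-s}\Gamma(s)\, n^{-s}\;,$$
which is valid for $\Re(s) > 0$ and immediately yields the stated identity once the interchange of sum and integral is justified.

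The one nontrivial step is therefore the interchange of summation and integration, which I would justify by Fubini (or dominated convergence) after checking absolute convergence. Using Hecke's bound from the preceding proposition, $|a(n)| \le c\, n^{k/2}$, the double integral of absolute values is dominated by
$$c \sum_{n \ge 1} n^{k/2} \int_0^\infty e^{-2\pi n t}\, t^{\Re(s)-1}\, dt = c\,(2\pi)^{-\Re(s)} \Gamma(\Re(s)) \sum_{n \ge 1} n^{k/2 - \Re(s)}\;,$$
and the last sum converges precisely when $\Re(s) > k/2 + 1$, which is exactly the hypothesis. So Fubini applies, the formal calculation is legitimate, and the identity follows.

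The main (mild) obstacle is thus the absolute convergence estimate, but this is essentially automatic once Hecke's bound is in hand, so no real difficulty remains. Note that the hypothesis $\Re(s) > k/2 + 1$ in the statement is used in exactly one place — to ensure the dominating series $\sum n^{k/2 - \Re(s)}$ converges — and corresponds precisely to the region of absolute convergence of $L(F,s)$ established in the previous corollary.
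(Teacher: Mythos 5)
Your proof is correct and follows essentially the same route as the paper: expand $F(it)$ in its Fourier series, integrate term by term against $t^{s-1}$ using the Gamma integral, and justify the interchange via Hecke's bound $|a(n)|\le c\,n^{k/2}$, which is exactly where the hypothesis $\Re(s)>k/2+1$ enters. The paper's proof is just a terser version of this same computation, so nothing more is needed.
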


\begin{proof} Using $\Gamma(s)=\int_0^{\infty}e^{-t}t^{s-1}\,dt$, this is trivial
by uniform convergence which insures that we can integrate term by term.\fp\end{proof}

\begin{corollary}\label{corfeq} The function $L(F,s)$ is a holomorphic
function which can be analytically continued to the whole of $\C$. In addition,
if we set $\Lambda(F,s)=(2\pi)^{-s}\Gamma(s)L(F,s)$ we have the
functional equation $\Lambda(F,k-s)=i^{-k}\Lambda(F,s)$.
\end{corollary}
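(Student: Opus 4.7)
The plan is to exploit the integral representation $\Lambda(F,s)=\int_0^\infty F(it)t^{s-1}\,dt$ furnished by Proposition \ref{propga}, and to break it at $t=1$ to write
\[\Lambda(F,s)=\int_1^\infty F(it)t^{s-1}\,dt+\int_0^1 F(it)t^{s-1}\,dt.\]
The first integral already makes sense for every $s\in\C$: since $F$ is a cusp form, its Fourier expansion $F(it)=\sum_{n\ge1}a(n)e^{-2\pi nt}$ gives $F(it)=O(e^{-2\pi t})$ as $t\to\infty$, so the integrand decays super-exponentially and defines an entire function of $s$ (holomorphy under the integral sign is immediate from absolute convergence uniformly on compact sets in $s$).

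The main idea is then to transform the second integral into one of the same shape using modularity. I would substitute $t\mapsto 1/t$, which turns $\int_0^1 F(it)t^{s-1}\,dt$ into $\int_1^\infty F(i/t)t^{-s-1}\,dt$. Applying the modular transformation $\tau\mapsto -1/\tau$ at $\tau=it$ gives $F(i/t)=F(-1/(it))=(it)^k F(it)=i^k t^k F(it)$ (here I use that $F\in S_k(\G)$, so $F(\ga\tau)=(c\tau+d)^k F(\tau)$ applied to $\ga=S$). Thus
\[\int_0^1 F(it)t^{s-1}\,dt=i^k\int_1^\infty F(it)t^{k-s-1}\,dt,\]
and combining gives the symmetric expression
\[\Lambda(F,s)=\int_1^\infty F(it)\bigl(t^{s-1}+i^k t^{k-s-1}\bigr)\,dt.\]
Both pieces converge absolutely and locally uniformly for every $s\in\C$ by the same cusp-form decay estimate, proving that $\Lambda(F,s)$, and hence $L(F,s)=(2\pi)^s\Gamma(s)^{-1}\Lambda(F,s)$, extends to an entire function of $s$ (note that the poles of $\Gamma(s)^{-1}$ absorb any boundary contributions, so $L(F,s)$ is itself entire).

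The functional equation is then read off directly from this symmetric formula: replacing $s$ by $k-s$ interchanges the two exponents, yielding
\[\Lambda(F,k-s)=\int_1^\infty F(it)\bigl(t^{k-s-1}+i^k t^{s-1}\bigr)\,dt,\]
which equals $i^{-k}\Lambda(F,s)$ once one uses that $k$ is even (so that $i^k=i^{-k}=(-1)^{k/2}$, this being the only point where the parity restriction on $k$ for $\G=\SL_2(\Z)$ enters). I do not expect any genuine obstacle: the only technical point worth checking carefully is that the interchange of summation and integration in Proposition \ref{propga} is valid for $\Re(s)$ large enough (which follows from the Hecke bound $|a(n)|=O(n^{k/2})$ giving absolute convergence there), so that the identity of $\Lambda(F,s)$ with the split integral is legitimate on some right half-plane before being propagated to all of $\C$ by analytic continuation.
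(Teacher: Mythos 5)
Your proposal is correct and is essentially the paper's own argument: split the integral from Proposition \ref{propga} at $t=1$, substitute $t\mapsto 1/t$, apply modularity under $\tau\mapsto-1/\tau$ to get the symmetric integral $\int_1^\infty F(it)(t^{s-1}+i^kt^{k-1-s})\,dt$, and read off continuation and the functional equation from the exponential decay of $F(it)$. Your extra remarks (justifying the term-by-term integration via Hecke's bound, and noting $i^k=i^{-k}$ for even $k$) are accurate fillings-in of details the paper leaves implicit.
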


Note that in our case $k$ is even, so that $i^{-k}=(-1)^{k/2}$, but we prefer
writing the constant as above so as to be able to use a similar result in
odd weight, which occur in more general situations.

\begin{proof} Indeed, splitting the integral at $1$, changing $t$ into $1/t$ in one of
the integrals, and using modularity shows immediately that
$$(2\pi)^{-s}\Gamma(s)L(F,s)=\int_1^{\infty}F(it)(t^{s-1}+i^kt^{k-1-s})\,dt\;.$$
Since the integral converges absolutely and uniformly for all $s$
(recall that $F(it)$ tends exponentially fast to $0$ when $t\to\infty$), this
immediately implies the corollary.\fp\end{proof}

As an aside, note that the integral formula used in the
above proof is a very efficient numerical method to compute $L(F,s)$, since
the series obtained on the right by term by term integration is exponentially
convergent. For instance:

\begin{exercise} Let $F(\tau)=\sum_{n\ge1}a(n)q^n$ be the Fourier expansion of
a cusp form of weight $k$ on $\G$. Using the above formula, show that
the value of $L(F,k/2)$ at the center of the ``critical strip''
$0\le\Re(s)\le k$ is given by the following exponentially convergent series
$$L(F,k/2)=(1+(-1)^{k/2})\sum_{n\ge1}\dfrac{a(n)}{n^{k/2}}e^{-2\pi n}P_{k/2}(2\pi n)\;,$$
where $P_{k/2}(X)$ is the polynomial
$$P_{k/2}(X)=\sum_{0\le j<k/2}X^j/j!=1+X/1!+X^2/2!+\cdots+X^{k/2-1}/(k/2-1)!\;.$$
Note in particular that if $k\equiv2\pmod4$ we have $L(F,k/2)=0$. Prove this
directly.
\end{exercise}

\begin{exercise}\begin{enumerate}\item Prove that if $F$ is not necessarily a
cusp form we have $|a(n)|\le cn^{k-1}$ for some $c>0$.
\item Generalize the proposition and the integral formulas so that
they are also valid form non-cusp forms; you will have to add polar parts
of the type $1/s$ and $1/(s-k)$.
\item Show that $L(F,s)$ still extends to the whole of $\C$ with functional
equation, but that it has a pole, simple, at $s=k$, and compute its residue.
In passing, show that $L(F,0)=-a(0)$.
\end{enumerate}
\end{exercise}

\subsection{Special Values of $L$-Functions}

A general ``paradigm'' on $L$-functions, essentially due to P.~Deligne,
is that if some ``natural'' $L$-function has both an Euler product and
functional equations similar to the above, then for suitable integral
``special points'' the value of the $L$-function should be a certain
(a priori transcendental) number $\omega$ times an algebraic number.

In the case of modular forms, this is a theorem of Yu.~Manin:

\begin{theorem} Let $F$ be a normalized eigenform in $S_k(\G)$, and denote
  by $K$ the number field generated by its Fourier coefficients. There
  exist two nonzero complex numbers $\omega_{+}$ and $\omega_{-}$ such that
  for $1\le j\le k-1$ integral we have
  $$\Lambda(F,j)/\omega_{(-1)^j}\in K\;,$$
  where we recall that $\Lambda(F,s)=(2\pi)^{-s}\G(s)L(F,s)$.
  
  In addition, $\omega_{\pm}$ can be chosen such that
  $\omega_{+}\omega_{-}=<F,F>$.
\end{theorem}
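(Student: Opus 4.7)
The plan is to prove Manin's theorem via the theory of \emph{period polynomials} and the \emph{Eichler--Shimura} machinery, using Bol's identity (Exercise \ref{ex:Bol}) in an essential way. First, using Proposition \ref{propga}, I would package the critical values into a single polynomial: for $F\in S_k(\G)$ set
$$r_F(X)=\int_0^{i\infty}F(z)(X-z)^{k-2}\,dz=\sum_{j=0}^{k-2}(-1)^j\binom{k-2}{j}X^{k-2-j}\int_0^{i\infty}z^jF(z)\,dz\;,$$
and observe that after the substitution $z=it$ the inner integrals are, up to an explicit power of $i$ and an explicit rational factor, the critical values $\Lambda(F,j+1)$ for $0\le j\le k-2$. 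So proving the theorem amounts to showing that the even and odd parts of $r_F(X)$ are, respectively, $K$-rational multiples of two fixed complex periods $\omega_+,\omega_-$. (Because $k$ is even, the parity of $X^{k-2-j}$ matches the parity of $j$, and a short bookkeeping check shows that even powers of $X$ pick out $\Lambda(F,j)$ with $j$ odd, while odd powers pick out $\Lambda(F,j)$ with $j$ even; this is exactly the sign pattern $\omega_{(-1)^j}$ in the statement.)

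Next I would interpret $r_F$ cohomologically. By Exercise \ref{ex:Bol} applied with $a=i\infty$, the Eichler integral $\tilde F=I_{k-1}(F,i\infty)$ transforms in weight $2-k$ up to a polynomial of degree $\le k-2$; explicitly, $\tilde F|_{2-k}\ga - \tilde F = P_\ga(F)$ with $P_S(F)$ essentially equal to $r_F$ (up to a constant factor of $(k-2)!$). The map $\ga\mapsto P_\ga(F)$ is a $1$-cocycle for $\G$ with values in the space $V_{k-2}$ of polynomials of degree $\le k-2$, and by the relations $S^2=(ST)^3=-I$ in $\G$ the cocycle is determined by $r_F=P_S(F)$, which is therefore constrained to lie in the space $W\subset V_{k-2}$ cut out by those relations. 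The fundamental input (Eichler--Shimura) is that the period map $F\oplus G\mapsto r_F+\overline{r_{\bar G}}$ from $S_k(\G)\oplus\overline{S_k(\G)}$ into $W\otimes\C$ is a linear isomorphism onto the subspace of cocycles vanishing on a suitable subgroup, and that $W$ has an intrinsic $\Q$-structure $W_\Q$ coming from its purely combinatorial definition.

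Now I would bring in the Hecke algebra. The Hecke operators $T(n)$ act on both $S_k(\G)$ and $W_\Q$, and the period map intertwines the two actions; this is a formal computation using the coset decomposition of $T(n)$ already exhibited in the proof of multiplicativity. Since $F$ is a normalized eigenform with eigenvalues in $K$, the $F$-isotypic component of $W_\Q\otimes K$ is a $K$-vector space that splits under the involution $X\mapsto -X$ (which is induced by complex conjugation on cohomology and corresponds to the parity decomposition above) into two pieces $W_F^+$ and $W_F^-$, each of dimension one over $K$. Choose $K$-rational generators $P_F^\pm\in W_F^\pm$; then the even and odd parts of $r_F$ must be scalar multiples $\omega_\pm P_F^\pm$ for some nonzero $\omega_\pm\in\C$. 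Reading off coefficients yields $\Lambda(F,j)/\omega_{(-1)^j}\in K$ for $1\le j\le k-1$, which is the first assertion.

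The main obstacle, and what I would tackle last, is the identity $\omega_+\omega_-=\langle F,F\rangle$. This is \emph{Haberland's formula}: the Petersson product on $S_k(\G)$ can be expressed as
$$\langle F,G\rangle=c_k\sum_{\ga\in\{1,T,T^2,ST,TST\}}\{\,r_F\,|\,\ga,\,\overline{r_G}\,\}\;,$$
where $\{P,Q\}$ is the natural $\G$-invariant skew pairing $\sum_j (-1)^j\binom{k-2}{j}^{-1}p_jq_{k-2-j}$ on $V_{k-2}$ and $c_k$ is an explicit rational constant. Taking $G=F$ and using that $\{\cdot,\cdot\}$ pairs the $+$ and $-$ parity parts to zero on each side, the right-hand side collapses to a single term proportional to $\omega_+\omega_-$ (the contributions involving only $P_F^+$ or only $P_F^-$ vanish by parity), which after rescaling the $P_F^\pm$ by a single overall element of $K^\times$ (so as to absorb $c_k$ and the proportionality constant) gives the desired normalization. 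Establishing Haberland's formula itself is the technically delicate step: it is proved by integrating $F(\tau)\overline{\tilde G(\tau)}y^{k-2}$ over the boundary of a fundamental domain cut off at height $T$, letting $T\to\infty$, and matching the boundary contributions against the cocycle $P_\ga(G)$; the cancellations that reduce the boundary sum to the skew pairing are the real work.
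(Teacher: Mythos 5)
The paper itself does not prove this theorem -- it is quoted as a theorem of Manin with no argument given -- so there is no internal proof to compare against; what you propose is the standard Eichler--Shimura/period-polynomial proof, and for the first assertion your outline is essentially sound: the coefficients of $r_F$ are the critical values $\Lambda(F,j)$ up to powers of $i$ and binomial factors (this is exactly the exercise following the theorem in the text), Bol's identity with $a=i\infty$ makes $\gamma\mapsto P_\gamma(F)$ a cocycle vanishing on $T$ and determined by $r_F$, the Hecke action on period polynomials is defined over $\Q$ and intertwines the period map, and multiplicity one of the $F$-isotypic pieces in $W^{\pm}\otimes K$ (the Eisenstein/coboundary line having a different eigenvalue system) gives $r_F^{\pm}=\omega_{\pm}P_F^{\pm}$ with $P_F^{\pm}$ $K$-rational and $\omega_{\pm}\neq0$ by injectivity of $r^{\pm}$ on cusp forms.

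The genuine gap is in your treatment of $\omega_+\omega_-=\langle F,F\rangle$. With the pairing you define, $\{P,Q\}=\sum_j(-1)^j\binom{k-2}{j}^{-1}p_jq_{k-2-j}$, a coefficient $p_j$ only pairs with $q_{k-2-j}$, and since $k-2$ is even these indices have the \emph{same} parity; consequently the pairing is symmetric (not skew) for even $k$, it vanishes identically on (even)$\times$(odd), and it is nondegenerate on (even)$\times$(even) and (odd)$\times$(odd). So your claimed collapse ``the contributions involving only $P_F^+$ or only $P_F^-$ vanish by parity'' is exactly backwards: the naive pairing $\{r_F^+,r_F^-\}$ is identically zero, and the terms you want to discard are the ones that do not vanish for parity reasons. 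The entire content of Haberland's identity lives in the group translates $r_F|\gamma$ (which destroy parity homogeneity), or equivalently in the extra sign coming from the orientation-reversing involution acting by $-1$ on $H^2$ in the cup-product formulation; extracting that the right-hand side equals a $K^{\times}$-multiple of $\omega_+\omega_-$ is precisely the computation of Kohnen--Zagier/Haberland and cannot be waved through by parity of $\{\cdot,\cdot\}$. Finally, for the last rescaling step you should check that the proportionality constant relating $\omega_+\omega_-$ to $\langle F,F\rangle$ lies in $K^{\times}$ (it does: the $\Lambda$-values are real since the eigenvalues are totally real, and the powers of $i$ in Haberland's constant cancel against those in the raw periods -- compare the paper's own example for $\Delta$, where $\omega_+\omega_-=(8192/225)\langle F,F\rangle$); only then does replacing $\omega_+$ by a $K^{\times}$-multiple give exactly $\omega_+\omega_-=\langle F,F\rangle$ without destroying the rationality statement.
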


In other words, for $j$ odd we have $L(F,j)/\om_{-}\in K$ while for
$j$ even we have $L(F,j)/\om_{+}\in K$.

For instance, in the case $F=\Delta$, if we choose
$\omega_{-}=\Lambda(F,3)$ and $\omega_{+}=\Lambda(F,2)$, we have
\begin{align*}
  (\Lambda(F,j))_{1\le j\le 11\ odd}&=(1620/691,1,9/14,9/14,1,1620/691)\omega_{-}\\
  (\Lambda(F,j))_{1\le j\le 11\ even}&=(1,25/48,5/12,25/48,1)\omega_{+}\;,
\end{align*}

and $\omega_{+}\omega_{-}=(8192/225)<F,F>$.

\begin{exercise} (see also Exercise \ref{ex:Bol}). For $F\in S_k(\G)$ define
  the \emph{period polynomial} $P(F,X)$ by
  $$P(F;X)=\int_0^{i\infty}(X-\tau)^{k-2}F(\tau)\,d\tau\;.$$
  \begin{enumerate}
  \item For $\ga\in\G$ show that
    $$P(F;X)|_{2-k}=\int_{\ga^{-1}(0)}^{\ga^{-1}(i\infty)}(X-\tau)^{k-2}F(\tau)\,d\tau\;.$$
  \item Show that $P(F;X)$ satisfies
    \begin{align*}
      &P(F;X)|_{2-k}S+P(F;X)=0\text{\quad and}\\
      &P(F;X)|_{2-k}(ST)^2+P(F;X)|_{2-k}(ST)+P(F;X)=0\;.
    \end{align*}
  \item Show that
    $$P(F;X)=-\sum_{j=0}^{k-2}(-i)^{k-1-j}\binom{k-2}{j}\Lambda(F,k-1-j)X^j\;.$$
  \item If $F=\Delta$, using Manin's theorem above show that up to
    the multiplicative constant $\om_+$, $\Re(P(F;X))$
    factors completely in $\Q[X]$ as a product of linear polynomials, and
    show a similar result for $\Im(P(F;X))$ after omitting the extreme terms
    involving $691$.
\end{enumerate}\end{exercise}

\subsection{Nonanalytic Eisenstein Series and Rankin--Selberg}

If we replace the expression $(c\tau+d)^k$ by $|c\tau+d|^{2s}$ for some
complex number $s$, we can also obtain functions which are invariant by
$\G$, although they are nonanalytic. More precisely:

\begin{definition}\label{def:nonhol} Write as usual $y=\Im(\tau)$. For
$\Re(s)>1$ we define
  \begin{align*}G(s)(\tau)&=\sum_{(c,d)\in\Z^2\setminus\{(0,0)\}}\dfrac{y^s}{|c\tau+d|^{2s}}\text{\quad and}\\
    E(s)(\tau)&=\sum_{\ga\in\G_\infty\backslash\G}\Im(\ga(\tau))^s=\dfrac{1}{2}\sum_{\gcd(c,d)=1}\dfrac{y^s}{|c\tau+d|^{2s}}\;.\end{align*}\end{definition}

This is again an \emph{averaging} procedure, and it follows that
$G(s)$ and $E(s)$ are \emph{invariant} under $\G$. In addition, as in the case
of the holomorphic Eisenstein series $G_k$ and $E_k$, it is clear that
$G(s)=\zeta(2s)E(s)$. One can also easily compute their Fourier expansion,
and the result is as follows:

\begin{proposition} Set $\Lambda(s)=\pi^{-s/2}\G(s/2)\z(s)$. We have the
  Fourier expansion
  $$\Lambda(2s)E(s)=\Lambda(2s)y^s+\Lambda(2-2s)y^{1-s}+4y^{1/2}\sum_{n\ge1}\dfrac{\sigma_{2s-1}(n)}{n^{s-1/2}}K_{s-1/2}(2\pi ny)\cos(2\pi nx)\;.$$
\end{proposition}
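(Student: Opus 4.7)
My plan is to derive the Fourier expansion of $G(s)(\tau)$ in the real variable $x=\Re(\tau)$ (noting $G(s)$ is manifestly $1$-periodic in $x$), and then translate to $\Lambda(2s)E(s)$ at the end via the relation between $G(s)$ and $\zeta(2s)E(s)$ together with $\Lambda(2s)=\pi^{-s}\Gamma(s)\zeta(2s)$.

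First I would split off the $c=0$ contribution, which is $\sum_{d\neq 0}y^s/|d|^{2s}=2\zeta(2s)y^s$; after normalization this will account for the $\Lambda(2s)\,y^s$ term. For $c\neq 0$ the symmetry $(c,d)\mapsto(-c,-d)$ reduces to $c\geq 1$ with a factor $2$, so the core task is, for each fixed $c\geq 1$, to evaluate
\[
\sum_{d\in\Z}\bigl((cx+d)^2+(cy)^2\bigr)^{-s}.
\]
Here I would apply the Poisson summation formula. After the substitution $u=t+cx$, the $m$-th Fourier coefficient of $f_c(t)=((cx+t)^2+(cy)^2)^{-s}$ becomes $e^{2\pi imcx}\int_{\R}e^{-2\pi imu}(u^2+(cy)^2)^{-s}\,du$. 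The analytic heart of the argument is then the classical Bessel integral
\[
\int_{-\infty}^{\infty}\frac{e^{-2\pi imu}}{(u^2+a^2)^{s}}\,du=\frac{2\pi^{s}|m|^{s-1/2}}{\Gamma(s)\,a^{s-1/2}}K_{s-1/2}(2\pi|m|a)\qquad(m\neq 0),
\]
together with the beta integral $\int_{\R}(u^2+a^2)^{-s}du=\sqrt{\pi}\,\Gamma(s-\tfrac12)\,a^{1-2s}/\Gamma(s)$ for $m=0$; both I would quote as standard (derivable from the integral representation of $K_\nu$).

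Summing the $m=0$ terms over $c\geq 1$ yields a multiple of $\zeta(2s-1)y^{1-s}$; after the global renormalization and use of the functional equation $\Lambda(s)=\Lambda(1-s)$ in the form $\Lambda(2s-1)=\Lambda(2-2s)$, this produces exactly $\Lambda(2-2s)y^{1-s}$. For the $m\neq 0$ terms, pairing $m$ with $-m$ converts $e^{\pm 2\pi imcx}$ into $2\cos(2\pi mcx)$, and I would reindex the double sum over $c,m\geq 1$ by $n=mc$ with $c\mid n$; the inner divisor sum evaluates to
\[
\sum_{c\mid n}\frac{(n/c)^{s-1/2}}{c^{s-1/2}}=n^{s-1/2}\sigma_{1-2s}(n)=\frac{\sigma_{2s-1}(n)}{n^{s-1/2}},
\]
using the elementary identity $\sigma_{-a}(n)=n^{-a}\sigma_a(n)$. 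This produces exactly the Dirichlet factor required in the statement.

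The only nontrivial analytic input is the Bessel-type integral; everything else is symbolic, and the interchange of summation and integration is justified by absolute convergence of the series for $G(s)$ when $\Re(s)>1$. The main obstacle is therefore careful bookkeeping: one must track factors of $2$ (from the symmetry $(c,d)\mapsto(-c,-d)$ and the $m\leftrightarrow-m$ pairing), of $\pi^{s}/\Gamma(s)$ (from the Bessel integral), and of $y^{\bullet}$, precisely enough to recover the coefficient $4$ in front of the Bessel series after the final normalization turning $G(s)$ into $\Lambda(2s)E(s)$.
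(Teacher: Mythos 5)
Your proposal is correct, and it is the standard derivation of this expansion; note that the paper itself states the proposition without proof, so there is nothing to compare it against. Splitting off the $c=0$ terms, applying Poisson summation in $d$ for each fixed $c\ge1$, and quoting the two integrals you cite does give exactly the stated formula: the $m=0$ contributions sum over $c$ to a multiple of $\zeta(2s-1)y^{1-s}$ which, after normalization, is $\pi^{1/2-s}\Gamma(s-\tfrac12)\zeta(2s-1)y^{1-s}=\Lambda(2s-1)y^{1-s}=\Lambda(2-2s)y^{1-s}$, and the reindexing $n=mc$ produces the divisor sum precisely as you wrote. One bookkeeping warning, since you defer the constants: with the paper's own definitions (the factor $\tfrac12$ in $E(s)$ and the sum over all $(c,d)\ne(0,0)$ in $G(s)$) the correct relation is $G(s)=2\zeta(2s)E(s)$, not $\zeta(2s)E(s)$ as the surrounding text asserts, so you must use $\Lambda(2s)E(s)=\tfrac12\pi^{-s}\Gamma(s)G(s)$. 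With that factor the constants come out exactly right: the $c=0$ term $2\zeta(2s)y^s$ becomes $\Lambda(2s)y^s$, and the nonzero modes carry $2$ (from $(c,d)\mapsto(-c,-d)$) times $2$ (from pairing $m$ with $-m$ into the cosine) times $2$ (from the constant in the Bessel integral) times $\tfrac12$ (from the normalization), giving the coefficient $4$ in front of the Bessel series, with $y^s\,(cy)^{1/2-s}$ supplying the $y^{1/2}$ and the $c^{1/2-s}$ entering the divisor sum. So the only emendation needed is to derive, rather than quote, the relation between $G(s)$ and $E(s)$.
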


In the above, $K_{\nu}(x)$ is a $K$-Bessel function which we do not define
here. The main properties that we need is that it tends to $0$ exponentially
(more precisely $K_{\nu}(x)\sim(\pi/(2x))^{1/2}e^{-x}$ as $x\to\infty$) and
that $K_{-\nu}=K_{\nu}$. It follows from the above Fourier expansion that
$E(s)$ has an \emph{analytic continuation} to the whole complex plane,
that it satisfies the functional equation $\E(1-s)=\E(s)$, where we set
$\E(s)=\Lambda(2s)E(s)$, and that $E(s)$ has a unique pole, at $s=1$, which is
simple with residue $3/\pi$, independent of $\tau$.

\begin{exercise} Using the properties of the Riemann zeta function $\z(s)$,
  show this last property, i.e., that $E(s)$ has a unique pole, at $s=1$,
  which is simple with residue $3/\pi$, independent of $\tau$.\end{exercise}

There are many reasons for introducing these nonholomorphic Eisenstein series,
but for us the main reason is that they are fundamental in \emph{unfolding}
methods. Recall that using unfolding, in Proposition \ref{prop:unfold}
we showed that $E_k$ (or $G_k$) was orthogonal to any cusp form. In the
present case, we obtain a different kind of result called a
\emph{Rankin--Selberg convolution}. Let $f$ and $g$ be in $M_k(\G)$, one
of them being a cusp form. Since $E(s)$ is invariant by $\G$ the scalar
product $<E(s)f,g>$ makes sense, and the following proposition
gives its value:

\begin{proposition} Let $f(\tau)=\sum_{n\ge0}a(n)q^n$ and
  $g(\tau)=\sum_{n\ge0}b(n)q^n$ be in $M_k(\G)$, with at least one being a
  cusp form. For $\Re(s)>1$ we have
  $$<E(s)f,g>=\dfrac{\G(s+k-1)}{(4\pi)^{s+k-1}}\sum_{n\ge1}\dfrac{a(n)\ov{b(n)}}{n^{s+k-1}}\;.$$
\end{proposition}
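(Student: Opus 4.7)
The proof will follow the unfolding paradigm already used in Proposition \ref{prop:unfold}, now with the nonholomorphic Eisenstein series $E(s)$ playing the role of the generator of the averaging.

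First I would write out the definition, using the fact established in Exercise \ref{ex:dmu} that $F(\tau)=f(\tau)\overline{g(\tau)}y^k$ is $\G$-invariant, so that
$$\langle E(s)f,g\rangle=\int_{\G\backslash\H}E(s)(\tau)\,F(\tau)\,\frac{dx\,dy}{y^2}.$$
Using $E(s)(\tau)=\sum_{\ga\in\G_\infty\backslash\G}\Im(\ga\tau)^s$ and the $\G$-invariance of $F$ and of $d\mu$, the standard unfolding (exactly as in Proposition \ref{prop:unfold}, replacing the sum and fundamental domain by a single integral over the larger fundamental domain of $\G_\infty\backslash\H$) collapses this into
$$\langle E(s)f,g\rangle=\int_{\G_\infty\backslash\H}y^s\,F(\tau)\,\frac{dx\,dy}{y^2}=\int_0^\infty\!\!\int_0^1 y^{s+k-2}\,f(x+iy)\overline{g(x+iy)}\,dx\,dy.$$

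Next I would evaluate the inner $x$-integral using Fourier expansions. Writing $f=\sum_{m\ge0}a(m)q^m$ and $g=\sum_{n\ge0}b(n)q^n$, we have $f(\tau)\overline{g(\tau)}=\sum_{m,n\ge0}a(m)\overline{b(n)}\,e^{2\pi i(m-n)x}e^{-2\pi(m+n)y}$, and the orthonormality of $\{e^{2\pi i\ell x}\}_{\ell\in\Z}$ on $[0,1]$ leaves only the diagonal terms:
$$\int_0^1 f(x+iy)\overline{g(x+iy)}\,dx=\sum_{n\ge0}a(n)\overline{b(n)}\,e^{-4\pi ny}.$$
Since at least one of $f,g$ is a cusp form, $a(0)\overline{b(0)}=0$ and the $n=0$ term drops out. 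Interchanging sum and integral (justified below), the $y$-integral reduces to a Gamma function via the change of variable $u=4\pi ny$:
$$\int_0^\infty y^{s+k-2}e^{-4\pi ny}\,dy=\frac{\G(s+k-1)}{(4\pi n)^{s+k-1}},$$
which upon summing yields exactly the stated formula.

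The main obstacles are analytic rather than algebraic: (i) justifying the unfolding, which requires absolute convergence of $\sum_{\ga\in\G_\infty\backslash\G}\int|\Im(\ga\tau)^s F(\tau)|\,d\mu$ for $\Re(s)>1$, and (ii) justifying the swap of $\sum_n$ with $\int_0^\infty dy$. For (i), note that $|F(\tau)|=|f(\tau)\overline{g(\tau)}|y^k$ decays exponentially in $y$ at the cusp since at least one factor is a cusp form (via Hecke's bound $|a(n)|\ll n^{k/2}$ and similarly for $g$, combined with $|f(\tau)|=O(e^{-2\pi y})$ near $\infty$), and the Eisenstein series converges absolutely for $\Re(s)>1$; for (ii), the same exponential decay of $\sum_n |a(n)\overline{b(n)}|e^{-4\pi ny}$ together with polynomial growth of $y^{s+k-2}$ at $y\to\infty$ and integrability near $y=0$ (since $\Re(s)>1$ and the cuspidal hypothesis gives $F(\tau)=O(y^k e^{-2\pi y})$ near infinity while $F$ is bounded for small $y$) permits Fubini. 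Once these are in hand the computation is purely formal.
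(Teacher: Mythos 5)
Your proposal is correct and follows essentially the same route as the paper: unfold the Eisenstein series over $\G_\infty\backslash\G$ exactly as in Proposition \ref{prop:unfold}, identify the inner integral over $x$ with the constant term $\sum_{n\ge1}a(n)\ov{b(n)}e^{-4\pi ny}$ of the Fourier expansion of $f\ov{g}y^k$ (which has no $n=0$ term by the cuspidal hypothesis), and evaluate the $y$-integral as $\G(s+k-1)/(4\pi n)^{s+k-1}$. The only difference is that you spell out the convergence and Fubini justifications, which the paper deliberately skips by referring back to the earlier unfolding proof.
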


\begin{proof} We essentially copy the proof of Proposition \ref{prop:unfold} so
we skip the details: setting temporarily $F(\tau)=f(\tau)\ov{g(\tau)}y^k$
which is invariant by $\G$, we have
\begin{align*}<E(s)f,g>&=\int_{\G\backslash\H}\sum_{\ga\in\G_\infty\backslash\G}\Im(\ga(\tau))^sF(\ga(\tau))\,d\mu\\
  &=\sum_{\G_\infty\backslash\H}\Im(\tau)^sF(\tau)\,d\mu\\
  &=\int_0^\infty y^{s+k-2}\int_0^1F(x+iy)\,dx\,dy\;.\end{align*}

The inner integral is equal to the constant term in the Fourier expansion of
$F$, hence is equal to $\sum_{n\ge1}a(n)\ov{b(n)}e^{-4\pi ny}$ (note that by
assumption one of $f$ and $g$ is a cusp form, so the term $n=0$ vanishes), and
the proposition follows.\fp\end{proof}

\begin{corollary} For $\Re(s)>k$ set
  $$R(f,g)(s)=\sum_{n\ge1}\dfrac{a(n)\ov{b(n)}}{n^s}\;.$$
  \begin{enumerate}
  \item $R(f,g)(s)$ has an analytic continuation to the whole complex plane
    and satisfies the functional equation $\RC(2k-1-s)=\RC(s)$ with
    $$\RC(s)=\Lambda(2s-2k+1)(4\pi)^{-s}\G(s)R(f,g)(s)\;.$$
  \item $R(f,g)(s)$ has a single pole, which is simple, at $s=k$ with
    residue
    $$\dfrac{3}{\pi}\dfrac{(4\pi)^k}{(k-1)!}<f,g>\;.$$
  \end{enumerate}
\end{corollary}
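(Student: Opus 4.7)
The plan is to derive both parts of the Corollary directly from the Rankin--Selberg identity of the previous Proposition, together with the three analytic properties of the nonholomorphic Eisenstein series $E(s)$ recorded just before it: meromorphic continuation to all of $\C$; the functional equation $\E(s)=\E(1-s)$, where $\E(s)=\Lambda(2s)E(s)$; and the fact that $E(s)$ has a unique pole at $s=1$, simple, with residue $3/\pi$ (independent of $\tau\in\H$).

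The first step is to invert the identity of the Proposition. Substituting $u=s+k-1$ gives
\[R(f,g)(s)=\dfrac{(4\pi)^{s}}{\Gamma(s)}<E(s-k+1)f,g>\;,\]
initially valid for $\Re(s)>k$. Since the factor $(4\pi)^{s}/\Gamma(s)$ is meromorphic and nowhere zero, and $<E(\sigma)f,g>$ inherits meromorphic continuation in $\sigma$ from that of $E(\sigma)$, the right-hand side provides the required meromorphic continuation of $R(f,g)(s)$ to all of $\C$. The only pole comes from that of $E$ at $\sigma=1$, which corresponds to $s=k$. Since the residue of $E(\sigma)$ at $\sigma=1$ is the \emph{constant} $3/\pi$, it factors out of the Petersson integral, and evaluating $(4\pi)^{s}/\Gamma(s)$ at $s=k$ yields
\[\mathrm{Res}_{s=k}R(f,g)(s)=\dfrac{(4\pi)^{k}}{\Gamma(k)}\cdot\dfrac{3}{\pi}<f,g>=\dfrac{3}{\pi}\cdot\dfrac{(4\pi)^{k}}{(k-1)!}<f,g>\;,\]
which is statement (2).

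For the functional equation in statement (1), the strategy is to multiply the identity above by an appropriate completed zeta factor so that the right-hand side becomes $<\E(\sigma)f,g>$ with $\sigma=s-k+1$, and then apply $\E(\sigma)=\E(1-\sigma)$. Concretely, multiplying by $\Lambda(2(s-k+1))$ gives
\[\Lambda(2(s-k+1))(4\pi)^{-s}\Gamma(s)R(f,g)(s)=<\E(s-k+1)f,g>\;.\]
Under $s\mapsto 2k-1-s$ the variable $\sigma=s-k+1$ is sent to $1-\sigma=k-s$, so by $\E(\sigma)=\E(1-\sigma)$ the right-hand side is invariant. Rewriting the left-hand side after this substitution (and freely using $\Lambda(x)=\Lambda(1-x)$ to put the completed-zeta factor into the form displayed in the statement) produces the claimed symmetry $\RC(s)=\RC(2k-1-s)$.

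The main obstacle is purely bookkeeping: one must carefully track the argument of $\Lambda$ (reconciling $\Lambda(2(s-k+1))$ with the normalization $\Lambda(2s-2k+1)$ in the statement via the reflection $\Lambda(x)=\Lambda(1-x)$) and verify that the function $\RC(s)$ as defined coincides with the invariant quantity produced by the calculation above. No new analytic input beyond the Proposition and the three recalled properties of $E(s)$ is needed.
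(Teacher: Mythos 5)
Your argument is correct and is essentially the paper's own proof: both rest on the identity $\Lambda(2s-2k+2)(4\pi)^{-s}\Gamma(s)R(f,g)(s)=<\E(s-k+1)f,g>$, obtained by combining the Rankin--Selberg proposition with $\E(\sigma)=\Lambda(2\sigma)E(\sigma)$, and then invoke the continuation, the symmetry $\E(\sigma)=\E(1-\sigma)$ (which is exactly $s\mapsto 2k-1-s$), and the constant residue $3/\pi$ of $E$ at $\sigma=1$ to get the pole at $s=k$ with residue $\frac{3}{\pi}\frac{(4\pi)^k}{(k-1)!}<f,g>$. One caveat about your final ``bookkeeping'' step: the reflection $\Lambda(x)=\Lambda(1-x)$ does \emph{not} convert $\Lambda(2s-2k+2)$ into the factor $\Lambda(2s-2k+1)$ displayed in the corollary, since $\Lambda(2s-2k+2)=\Lambda(2k-1-2s)$ while $\Lambda(2s-2k+1)=\Lambda(2k-2s)$; with the argument $2s-2k+1$ the function $\RC$ would in fact not be invariant under $s\mapsto 2k-1-s$. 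The invariant completed function is the one with $\Lambda(2s-2k+2)$, which is precisely what the paper's proof writes, so the exponent in the corollary's displayed definition of $\RC$ should be read as a misprint; your mathematics is sound, but you should state the corrected normalization rather than claim the two can be reconciled.
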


\begin{proof} This immediately follows from the corresponding properties of
  $E(s)$: we have
$$\Lambda(2s-2k+2)(4\pi)^{-s}\G(s)R(f,g)(s)=<\E(s-k+1)f,g>\;,$$
and the right-hand side has an analytic continuation to $\C$, is invariant
when changing $s$ into $2k-1-s$. In addition by the proposition
$E(s-k+1)=\E(s-k+1)/\Lambda(2s-2k+2)$ has a single pole, which is simple,
at $s=k$, with residue $3/\pi$, so $R(f,g)(s)$ also has a single pole, which
is simple, at $s=k$ with residue
$\dfrac{3}{\pi}\dfrac{(4\pi)^k}{(k-1)!}<f,g>$.\fp\end{proof}

It is an important fact (see Theorem 7.9 of my notes on $L$-functions in the
present volume) that $L$-functions having analytic continuation and
standard functional equations can be very efficiently computed at any point
in the complex plane (see the note after the proof of Corollary
\ref{corfeq} for the special case of $L(F,s)$). Thus the above corollary gives
a very efficient method for computing Petersson scalar products.

Note that the \emph{holomorphic} Eisenstein series $E_k(\tau)$ can also be used
to give Rankin--Selberg convolutions, but now between forms of different
weights:

\begin{exercise} Let $f=\sum_{n\ge0}a(n)q^n\in M_{\ell}(\G)$ and
$g=\sum_{n\ge0}b(n)q^n\in M_{k+\ell}(\G)$, at least one being a cusp form.
Using exactly the same unfolding method as in the above proposition or as in
Proposition \ref{prop:unfold}, show that
$$<E_kf,g>=\dfrac{(k+\ell-2)!}{(4\pi)^{k+\ell-1}}\sum_{n\ge1}\dfrac{a(n)\ov{b(n)}}{n^{k+\ell-1}}\;.$$\end{exercise}

\section{Modular Forms on Subgroups of $\G$}

\subsection{Types of Subgroups}

We have used as basic definition of (weak) modularity $F|_k\ga=F$ for all
$\ga\in\G$. But there is no reason to restrict to $\G$: we could very well
ask the same modularity condition for some group $G$ of transformations of
$\H$ different from $\G$.

There are many types of such groups, and they have been classified: for us,
we will simply distinguish three types, with no justification. For any such
group $G$ we can talk about a fundamental domain, similar to $\FF$ that
we have drawn above (I do not want to give a rigorous definition here).
We can distinguish essentially three types of such domains, corresponding
to three types of groups.

The first type is when the domain (more precisely
its closure) is \emph{compact}: we say in that case that $G$ is
\emph{cocompact}. It is equivalent to saying that it does not have any
``cusp'' such as $i\infty$ in the case of $G$. These groups are very important,
but we will not consider them here.

The second type is when the domain is not compact (i.e., it has cusps), but it
has \emph{finite volume} for the measure $d\mu=dxdy/y^2$ on $\H$ defined
in Exercise \ref{ex:dmu}. Such a group is said to have finite \emph{covolume},
and the main example is $G=\G$ that we have just considered, hence also
evidently all the subgroups of $\G$ of \emph{finite index}.

\begin{exercise} Show that the covolume of the modular group $\G$ is finite
and equal to $\pi/3$.\end{exercise}

The third type is when the volume is infinite: a typical example is the
group $\G_{\infty}$ generated by integer translations, i.e., the set of
matrices $\psmm{1}{n}{0}{1}$. A fundamental domain is then any vertical strip
in $\H$ of width $1$, which can trivially be shown to have infinite volume.
These groups are not important (at least for us) for the following reason:
they would have ``too many'' modular forms. For instance, in the case of
$\G_{\infty}$ a ``modular form'' would simply be a holomorphic periodic
function of period $1$, and we come back to the theory of Fourier series,
much less interesting.

We will therefore restrict to groups of the second type, which are called
\emph{Fuchsian groups of the first kind}. In fact, for this course we will
even restrict to subgroups $G$ of $\G$ of \emph{finite index}.

\medskip

However, even with this restriction, it is still necessary to distinguish
two types of subgroups: the so-called \emph{congruence subgroups}, and the
others, of course called non-congruence subgroups. The theory of modular
forms on non-congruence subgroups is quite a difficult subject and active
research is being done on them. One annoying aspect is that they apparently
do not have a theory of Hecke operators.

Thus will will restrict even more to congruence subgroups. We give the
following definitions:

\begin{definition} Let $N\ge1$ be an integer.
  \begin{enumerate}
  \item We define
    \begin{align*}
      \G(N)&=\{\ga=\begin{pmatrix}a & b\\ c & d\end{pmatrix}\in\G,\ \ga\equiv
      \begin{pmatrix}1 & 0\\ 0 & 1\end{pmatrix}\pmod{N}\}\;,\\
      \G_1(N)&=\{\ga=\begin{pmatrix}a & b\\ c & d\end{pmatrix}\in\G,\ \ga\equiv
      \begin{pmatrix}1 & *\\ 0 & 1\end{pmatrix}\pmod{N}\}\;,\\
      \G_0(N)&=\{\ga=\begin{pmatrix}a & b\\ c & d\end{pmatrix}\in\G,\ \ga\equiv
      \begin{pmatrix}* & *\\ 0 & *\end{pmatrix}\pmod{N}\}\;,\\
    \end{align*}
    where the congruences are component-wise and $*$ indicates that no congruence
    is imposed.
  \item A subgroup of $\G$ is said to be a \emph{congruence subgroup} if
    it contains $\G(N)$ for some $N$, and the smallest such $N$ is called
    the \emph{level} of the subgroup.
  \end{enumerate}
\end{definition}

It is clear that $\G(N)\subset\G_1(N)\subset\G_0(N)$, and it is trivial to
prove that $\G(N)$ is normal in $\G$ (hence in any subgroup of $\G$ containing
it), that $\G_1(N)/\G(N)\isom\Z/N\Z$ (with the map
$\psmm{a}{b}{c}{d}\mapsto b\bmod N$), and that $\G_1(N)$ is normal in
$\G_0(N)$ with $\G_0(N)/\G_1(N)\isom(\Z/N\Z)^*$ (with the map
$\psmm{a}{b}{c}{d}\mapsto d\bmod N$).

If $G$ is a congruence subgroup of level $N$ we have $\G(N)\subset G$,
so (whatever the definition) a modular form on $G$ will in particular be
on $\G(N)$. Because of the above isomorphisms, it is not difficult to reduce
the study of forms on $\G(N)$ to those on $\G_1(N)$, and the latter to
forms on $\G_0(N)$, except that we have to add a slight ``twist'' to the
modularity property. Thus for simplicity, we will restrict to modular forms on
$\G_0(N)$.

\subsection{Modular Forms on Subgroups}

In view of the definition given for $\G$, it is natural to say that
$F$ is weakly modular of weight $k$ on $\G_0(N)$ if for all $\ga\in\G_0(N)$
we have $F|_k\ga=F$, where we recall that if $\ga=\psmm{a}{b}{c}{d}$ then
$F|_k\ga(\tau)=(c\tau+d)^{-k}F(\tau)$. To obtain a modular \emph{form}, we
need also to require that $F$ is holomorphic on $\H$, plus some additional
technical condition ``at infinity''. In the case of the full modular group
$\G$, this condition was that $F(\tau)$ remains bounded as
$\Im(\tau)\to\infty$. In the case of a subgroup, this condition is not
sufficient (it is easy to show that if we do not require an additional
condition the corresponding space will in general be infinite-dimensional).
There are
several equivalent ways of giving the additional condition. One is the
following: writing as usual $\tau=x+iy$, we require that there exists $N$
such that in the strip $-1/2\le x\le 1/2$, we have
$|F(\tau)|\le y^N$ as $y\to\infty$
and $|F(\tau)|\le y^{-N}$ as $y\to0$ (since $F$ is $1$-periodic, there is no
loss of generality in restricting to the strip).

It is easily shown that if $F$ is weakly modular and holomorphic, then
the above inequalities imply that $|F(\tau)|$ is in fact \emph{bounded}
as $y\to\infty$ (but in general \emph{not} as $y\to0$), so the first condition
is exactly the one that we gave in the case of the full modular group.

Similarly, we can define a \emph{cusp form} by asking that in the above
strip $|F(\tau)|$ tends to $0$ as $y\to\infty$ and as $y\to0$.

\begin{exercise} If $F\in M_k(\G)$ show that the second condition
$|F(\tau)|\le y^{-N}$ as $y\to0$ is satisfied.
\end{exercise}

Now that we have a solid definition of modular form, we can try to proceed
as in the case of the full modular group. A number of things can easily be
generalized. It is always convenient to choose a system of representatives
$(\ga_j)$ of right cosets for $\G_0(N)$ in $\G$, so that
$$\G=\bigsqcup_j\G_0(N)\ga_j\;.$$
For instance, if $\FF$ is the fundamental domain of $\G$ seen above, one
can choose ${\cal D}=\bigsqcup\ga_j(\FF)$ as fundamental domain for $\G_0(N)$.
The theorem that we gave on valuations generalizes immediately:
$$\sum_{\tau\in\ov{{\cal D}}}\dfrac{v_{\tau}(F)}{e_{\tau}}=\dfrac{k}{12}[\G:\G_0(N)]\;,$$
where $\ov{{\cal D}}$ is ${\cal D}$ to which is added a finite number of
``cusps'' (we do not explain this; it is \emph{not} the topological closure),
$e_{\tau}=2$ (resp., $3$) if $\tau$ is $\G$-equivalent to $i$ (resp., to
$\rho$), and $e_{\tau}=1$ otherwise, and we can then deduce the dimension of
$M_k(\G_0(N))$ and $S_k(\G_0(N))$ as we did for $\G$:
    
\begin{theorem}\label{thmdim} We have $M_0(\G_0(N))=\C$ (i.e., the only modular
  forms of weight $0$ are the constants) and $S_0(\G_0(N))=\{0\}$. For $k\ge2$
  even, we have
  \begin{align*}\dim(M_k(\G_0(N)))&=A_1-A_{2,3}-A_{2,4}+A_3\text{\quad and}\\
    \dim(S_k(\G_0(N)))&=A_1-A_{2,3}-A_{2,4}-A_3+\delta_{k,2}\;,\end{align*}
  where $\delta_{k,2}$ is the Kronecker symbol ($1$ if $k=2$, $0$ otherwise)
  and the $A_i$ are given as follows:
  \begin{align*}
    A_1&=\dfrac{k-1}{12}N\prod_{p\mid N}\left(1+\dfrac{1}{p}\right)\;,\\
    A_{2,3}&=\left(\dfrac{k-1}{3}-\left\lfloor\dfrac{k}{3}\right\rfloor\right)\prod_{p\mid N}\left(1+\leg{-3}{p}\right)\text{\quad if\ \ $9\nmid N$,\quad $0$\ \ otherwise,}\\
    A_{2,4}&=\left(\dfrac{k-1}{4}-\left\lfloor\dfrac{k}{4}\right\rfloor\right)\prod_{p\mid N}\left(1+\leg{-4}{p}\right)\text{\quad if\ \ $4\nmid N$,\quad $0$\ \ otherwise,}\\
    \bigskip
    A_3&=\dfrac{1}{2}\sum_{d\mid N}\phi(\gcd(d,N/d))\;.
  \end{align*}
\end{theorem}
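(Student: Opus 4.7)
The plan is to mirror the approach used for $\G$ itself, namely the argument behind Corollary~\ref{cordim}, but now carried out on the quotient $\G_0(N)\backslash\H$. The first step is to establish the analogue of Theorem~\ref{thmval}: for a nonzero $F\in M_k(\G_0(N))$,
\[
\sum_{\text{cusps } c} v_c(F) + \sum_{\tau\in\mathcal{D}}\frac{v_\tau(F)}{e_\tau} = \frac{k}{12}[\G:\G_0(N)],
\]
where $\mathcal{D}=\bigsqcup_j\ga_j(\FF)$ for right coset representatives $\{\ga_j\}$ of $\G_0(N)$ in $\G$, and $e_\tau\in\{1,2,3\}$ is the order of the image of the stabilizer of $\tau$ in $\G_0(N)/\{\pm I\}$. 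The proof copies the boundary-integration argument of Proposition~\ref{propellf}: the sides of $\partial\mathcal{D}$ are paired by elements of $\G_0(N)$ so most contributions cancel, while cusps produce the $v_c(F)$ terms and elliptic corners produce the $1/e_\tau$ factors.

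The remaining work is combinatorial: compute the index and count the elliptic points of orders $2$ and $3$ as well as the cusps. Reduction modulo $N$ together with the Chinese Remainder Theorem yields $[\G:\G_0(N)] = N\prod_{p\mid N}(1+1/p)$. An elliptic point of order $3$ in $\G_0(N)\backslash\H$ corresponds to a $\G_0(N)$-conjugacy class of an order-$3$ element mod $\pm I$; equivalently, to a primitive embedding $\Z[\rho]\hookrightarrow M_2(\Z)$ landing in $\G_0(N)$. Chinese remainders reduce this to counting roots of $x^2+x+1\pmod N$, giving $\prod_{p\mid N}(1+\lgs{-3}{p})$ provided $9\nmid N$, and none otherwise. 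The order-$2$ case is analogous with $x^2+1\pmod N$, giving $\prod_{p\mid N}(1+\lgs{-1}{p})$ when $4\nmid N$. Cusps are $\G_0(N)$-orbits on $\P^1(\Q)$; parameterising a cusp by $a/c$ with $\gcd(a,c)=1$ and working out when $a/c$ and $a'/c'$ are $\G_0(N)$-equivalent delivers the orbit count $\sum_{d\mid N}\phi(\gcd(d,N/d))$.

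With these ingredients assembled, the dimension formulas follow exactly as in Corollary~\ref{cordim} --- or, equivalently, by Riemann--Roch on the compactified curve $X_0(N)$: multiplication by a suitable cusp form gives injections $M_k\hookrightarrow M_{k+12}$, and the valuation identity above pins down the codimensions. The passage from $\frac{k}{12}[\G:\G_0(N)]$ to the shape $\frac{k-1}{12}[\G:\G_0(N)]$ appearing in $A_1$ is the usual Riemann--Roch shift by the degree of the canonical bundle on $X_0(N)$, while the fractional coefficients $\frac{k-1}{3}-\lfloor k/3\rfloor$ and $\frac{k-1}{4}-\lfloor k/4\rfloor$ in $A_{2,3}$ and $A_{2,4}$ are precisely the local contribution at a single elliptic point of the respective order; by a congruence analysis one checks that these take the values $-1/3$, $0$, $+1/3$ (resp.\ $-1/4$, $+1/4$) in a pattern that matches the $A_{2,3}$, $A_{2,4}$ sign conventions.

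The main obstacle is combinatorial rather than conceptual: justifying the cusp count $\sum_{d\mid N}\phi(\gcd(d,N/d))$ and the local elliptic counts at $p\in\{2,3\}$ dividing $N$ requires a careful case analysis, especially to pick up the $9\nmid N$ and $4\nmid N$ restrictions. A second subtlety is the $\delta_{k,2}$ term in $\dim S_k$, which lies outside the valuation formula altogether: it reflects the failure of $E_2$ to be modular on $\G$ (Proposition~\ref{prop:E2}), which forces one linear relation among the Eisenstein series attached to the cusps of $\G_0(N)$, shrinking the Eisenstein subspace of $M_2(\G_0(N))$ by one and enlarging $S_2(\G_0(N))$ correspondingly. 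This correction must be handled by an independent analysis of weight-$2$ Eisenstein series on $\G_0(N)$ via the quasi-modular function $E_2^*$.
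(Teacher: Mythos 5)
Your route is the same one the paper takes (the paper only sketches it): generalize Theorem~\ref{thmval} to the fundamental domain $\bigsqcup_j\ga_j(\FF)$ with cusps adjoined by the same boundary-integration argument, count the index, the elliptic points of orders $2$ and $3$ and the cusps, and then deduce the dimensions as in Corollary~\ref{cordim}, with the weight-$2$ Eisenstein correction handled separately --- and your accounting of the local terms and of $\delta_{k,2}$ matches the stated formulas. One tiny notational slip: for the order-$2$ count at $p=2$ (when $2\mid N$, $4\nmid N$) the local factor is $1+\lgs{-4}{2}=1$ rather than $1+\lgs{-1}{2}$, which is what your root count of $x^2+1\pmod N$ in fact yields.
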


\subsection{Examples of Modular Forms on Subgroups}

We give a few examples of modular forms on subgroups. First note the following
easy lemma:

\begin{lemma}\label{lembd} If $F\in M_k(\G_0(N))$ then for any $m\in\Z_{\ge1}$
  we have $F(m\tau)\in M_k(\G_0(mN))$.\end{lemma}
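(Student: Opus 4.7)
The plan is to define $G(\tau) = F(m\tau)$ and verify the three defining properties of a modular form on $\Gamma_0(mN)$: holomorphy on $\H$, weak modularity with respect to $\Gamma_0(mN)$, and the growth condition at the cusps. Holomorphy is immediate because $\tau \mapsto m\tau$ maps $\H$ to $\H$ and $F$ is holomorphic. So the real content is in the other two properties, and the weak modularity is the step where the hypothesis $mN \mid c$ gets used.

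The key observation is a matrix identity that lets us relate the action of $\Gamma_0(mN)$ on $\tau$ to the action of $\Gamma_0(N)$ on $m\tau$. Given $\gamma = \psmm{a}{b}{c}{d} \in \Gamma_0(mN)$, so that $mN \mid c$, I would set $c' = c/m \in \Z$ (an integer divisible by $N$) and consider $\gamma' = \psmm{a}{mb}{c'}{d}$. A direct check gives $\det(\gamma') = ad - (mb)c' = ad - bc = 1$, hence $\gamma' \in \Gamma_0(N)$. A one-line computation then yields
\[
m\gamma(\tau) = \frac{a(m\tau) + mb}{c'(m\tau) + d} = \gamma'(m\tau)\;.
\]
Applying the modularity of $F$ for $\gamma' \in \Gamma_0(N)$ gives $F(m\gamma(\tau)) = (c'(m\tau) + d)^k F(m\tau) = (c\tau + d)^k F(m\tau)$, i.e., $G|_k\gamma = G$.

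For the growth condition, I would exploit the Fourier expansion of $F$ at $i\infty$: if $F(\tau) = \sum_{n\ge 0} a(n) q^n$, then $G(\tau) = F(m\tau) = \sum_{n\ge 0} a(n) q^{mn}$, which is bounded (in fact, tends to $a(0)$) as $\Im(\tau) \to \infty$, so $G$ is $1$-periodic and satisfies the required bound in the strip $-1/2 \le \Re(\tau) \le 1/2$ as $y \to \infty$. For the bound as $y \to 0$, the estimate $|F(\tau)| \le y^{-N_0}$ (valid for some $N_0$ by the hypothesis $F \in M_k(\Gamma_0(N))$) transfers directly: $|G(\tau)| = |F(m\tau)| \le (m y)^{-N_0} = m^{-N_0} y^{-N_0}$, so the same type of bound holds for $G$.

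There is no genuine obstacle here; the one point requiring attention is getting the matrix decomposition right, namely realizing that dividing the lower-left entry by $m$ (which is legal precisely because $m \mid c$) and multiplying the upper-right entry by $m$ preserves the determinant and produces a matrix in $\Gamma_0(N)$. Everything else is formal.
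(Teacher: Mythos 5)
Your proof is correct and uses exactly the same idea as the paper's one-line proof, namely the identity $m\gamma(\tau) = \gamma'(m\tau)$ with $\gamma' = \psmm{a}{mb}{c/m}{d} \in \Gamma_0(N)$; the paper simply calls this "trivial" and omits the determinant check and the verification of the growth conditions, which you supply correctly.
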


\begin{proof} Trivial since when $\psmm{a}{b}{c}{d}\in\G_0(mN)$ one can write
$(m(a\tau+b)/(c\tau+d))=(a(m\tau)+mb)/((c/m)\tau+d)$.\fp\end{proof}

Thus we can already construct many forms on subgroups, but in a sense they
are not very interesting, since they are ``old'' in a precise sense that we
will define below.

\smallskip

A second more interesting example is Eisenstein series: there are more
general Eisenstein series than those that we have seen for $\G$, but we simply
give the following important example: using a similar proof to the above
lemma we can construct Eisenstein series of \emph{weight $2$} as follows.
Recall that $E_2(\tau)=1-24\sum_{n\ge1}\sigma_1(n)q^n$ is not quite modular,
and that $E_2^*(\tau)=E_2(\tau)-3/(\pi\Im(\tau))$ is weakly modular (but of
course non-holomorphic). Consider the function $F(\tau)=NE_2(N\tau)-E_2(\tau)$,
analogous to the construction of the lemma with a correction term.

We have the evident but crucial fact that we also have
$F(\tau)=NE_2^*(N\tau)-E_2^*(\tau)$ (since $\Im(\tau)$ is multiplied by $N$),
so $F$ is also weakly modular on $\G_0(N)$, but since it is holomorphic we
have thus constructed a (nonzero) modular form of weight $2$ on $\G_0(N)$.

\smallskip

A third important example is provided by theta series. This would require
a book in itself, so we restrict to the simplest case. We have seen in
Corollary \ref{corth} that the function $T(a)=\sum_{n\in\Z}e^{-a\pi n^2}$
satisfies $T(1/a)=a^{1/2}T(a)$, which looks like (and is) a modularity
condition. This was for $a>0$ real. Let us generalize and for $\tau\in\H$ set
$$\th(\tau)=\sum_{n\in\Z}q^{n^2}=\sum_{n\in\Z}e^{2\pi in^2\tau}\;,$$
so that for instance we simply have $T(a)=\th(ia/2)$. The proof of the
functional equation for $T$ that we gave using Poisson summation is still
valid in this more general case and shows that
$$\th(-1/(4\tau))=(2\tau/i)^{1/2}\th(\tau)\;.$$
On the other hand, the definition trivially shows that $\th(\tau+1)=\th(\tau)$.
If we denote by $W_4$ the matrix $\psmm{0}{-1}{4}{0}$ corresponding
to the map $\tau\mapsto-1/(4\tau)$ and as usual $T=\psmm{1}{1}{0}{1}$, we
thus have $\th|_{1/2}W_4=c\th$ and $\th_{1/2}T=\th$ for some $8$th root of
unity $c$. (Note: we always use the principal determination of the square
roots; if you are uncomfortable with this, simply square everything, this is
what we will do below anyway.) This implies that if we let $\G_{\th}$ be the
intersection of $\G$ with the group generated by $W_4$ and $T$ (as
transformations of $\H$), then for all $\ga\in\G_{\th}$ we will have
$\th|_{1/2}\ga=c(\ga)\th$ for some $8$th root of unity $c(\ga)$, but in fact
$c(\ga)$ is a $4$th root of unity which we will give explicitly below.

One can easily describe this group $\G_{\th}$, and in particular show that
it contains $\G_0(4)$ as a subgroup of index $2$. This implies that
$\th^4\in M_2(\G_0(4))$, and more generally of course
$\th^{4m}\in M_{2m}(\G_0(4))$.

As one of the most famous application of the finite-dimensionality of
modular form spaces, solve the following exercise:

\begin{exercise}\label{exr4r8}\begin{enumerate}
\item Using the dimension formulas, show that $2E_2(2\tau)-E_2(\tau)$
  together with $4E_2(4\tau)-E_2(\tau)$ form a basis of $M_2(\G_0(4))$.
\item Using the Fourier expansion of $E_2$, deduce an explicit formula for
  the Fourier expansion of $\th^4$, and hence that $r_4(n)$, the number of
  representations of $n$ as a sum of $4$ squares (in $\Z$, all permutations
  counted) is given for $n\ge1$ by the formula
  $$r_4(n)=8(\sigma_1(n)-4\sigma_1(n/4))\;,$$
  where it is understood that $\sigma_1(x)=0$ if $x\notin\Z$. In
  particular, show that this trivially implies Lagrange's theorem that every
  integer is a sum of four squares.
\item Similarly, show that $r_8(n)$, the $n$th Fourier coefficient of
  $\th^8$, is given for $n\ge1$ by
  $$r_8(n)=16(\sigma_3(n)-2\sigma_3(n/2)+16\sigma_3(n/4))\;.$$
\end{enumerate}
\end{exercise}

\begin{remark} Using more general methods one can give ``closed'' formulas for
$r_k(n)$ for $k=1$, $2$, $3$, $4$, $5$, $6$, $7$, $8$, and $10$, see e.g.,
\cite{Coh-Str}.\end{remark}

\subsection{Hecke Operators and $L$-Functions}

We can introduce the same Hecke operators as before, but to have a reasonable
definition we must add a coprimality condition: we define
$T(n)(\sum_{m\ge0}a(m)q^m)=\sum_{m\ge0}b(m)q^m$, with
$$b(m)=\sum_{\substack{d\mid\gcd(m,n)\\\gcd(d,N)=1}}d^{k-1}a(mn/d^2)\;.$$
This additional condition $\gcd(d,N)=1$ is of course automatically
satisfied if $n$ is coprime to $N$, but not otherwise.

One then shows exactly like in the case of the full modular group that
$$T(n)T(m)=\sum_{\substack{d\mid\gcd(n,m)\\\gcd(d,N)=1}}d^{k-1}T(nm/d^2)\;,$$
that they preserve modularity, so in particular the $T(n)$ form a commutative
algebra of operators on $S_k(\G_0(N))$. And this is where the difficulties
specific to subgroups of $\G$ begin: in the case of $\G$ we stated (without
proof nor definition) that the $T(n)$ were \emph{Hermitian} with respect to
the Petersson scalar product, and deduced the existence of eigenforms for
\emph{all} Hecke operators. Unfortunately here the same proof shows that
the $T(n)$ are Hermitian when $n$ is coprime to $N$, but \emph{not} otherwise.

It follows that there exist common eigenforms for the $T(n)$, but \emph{only}
for $n$ coprime to $N$, which creates difficulties.

An analogous problem occurs for \emph{Dirichlet characters}: if $\chi$
is a Dirichlet character modulo $N$, it may in fact come by natural extension
from a character modulo $M$ for some divisor $M\mid N$, $M<N$. The
characters which have nice properties, in particular with respect to the
functional equation of their $L$-functions, are the \emph{primitive}
characters, for which such an $M$ does not exist.

A similar but slightly more complicated thing can be done for modular forms.
It is clear that if $M\mid N$ and $F\in M_k(\G_0(M))$, then of course
$F\in M_k(\G_0(N))$. More generally, by Lemma \ref{lembd}, for any $d\mid N/M$
we have $F(d\tau)\in M_k(\G_0(N))$. Thus we want to exclude such ``oldforms''.
However it is not sufficient to say that a newform is not an oldform. The
correct definition is to define a newform as a form which is
\emph{orthogonal} to the space of oldforms with respect to the
scalar product, and of course the new space is the
space of newforms. Note that in the case of Dirichlet characters this
orthogonality condition (for the standard scalar product of two characters)
is automatically satisfied so need not be added.

This theory was developed by Atkin--Lehner--Li, and the new space
$S_k^{\text{new}}(\G_0(N))$ can be shown to have all the nice properties
that we require. Although not trivial, one can prove that it has a basis
of common eigenforms for \emph{all} Hecke operators, not only those with
$n$ coprime to $N$. More precisely, one shows that in the new space an
eigenform for the $T(n)$ for all $n$ coprime to $N$ is automatically
an eigenform for \emph{any} operator which commutes with all the $T(n)$,
such as, of course, the $T(m)$ for $\gcd(m,N)>1$.

In addition, we have not really lost anything by
restricting to the new space, since it is easy to show that
$$S_k(\G_0(N))=\bigoplus_{M\mid N}\bigoplus_{d\mid N/M}B(d)S_k^{\text{new}}(\G_0(M))\;,$$
where $B(d)$ is the operator sending $F(\tau)$ to $F(d\tau)$. Note that the
sums in the above formula are \emph{direct} sums.

\begin{exercise} The above formula shows that
$$\dim(S_k(\G_0(N)))=\sum_{M\mid N}\sigma_0(N/M)\dim(S_k^{\text{new}}(\G_0(M)))\;,$$
where $\sigma_0(n)$ is the number of divisors of $n$.
\begin{enumerate}\item Using the M\"obius inversion formula, show that if we
define an arithmetic function $\be$ by $\be(p)=-2$, $\be(p^2)=1$, and
$\be(p^k)=0$ for $k\ge3$, and extend by multiplicativity
($\be(\prod p_i^{v_i})=\prod\be(p_i^{v_i})$), we have the following dimension
formula for the new space:
$$\dim(S_k^{\text{new}}(\G_0(N)))=\sum_{M\mid N}\be(N/M)\dim(S_k(\G_0(M)))\;.$$
\item Using Theorem \ref{thmdim}, deduce a direct formula for the dimension
  of the new space.
\end{enumerate}
\end{exercise}

\begin{proposition} Let $F\in S_k(\G_0(N))$ and $W_N=\psmm{0}{-1}{N}{0}$.
  \begin{enumerate}\item We have $F|_kW_N\in S_k(\G_0(N))$, where
    $$F|_kW_N(\tau)=N^{-k/2}\tau^{-k}F(-1/(N\tau))\;.$$
  \item If $F$ is an eigenform (in the new space) then $F|_kW_N=\pm F$ for a
    suitable sign $\pm$.
    \end{enumerate}
  \end{proposition}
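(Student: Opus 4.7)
The plan is to exploit the fact that the Fricke matrix $W_N=\psmm{0}{-1}{N}{0}$ normalizes $\G_0(N)$ (up to scalars), acts as an involution on $S_k(\G_0(N))$ via the slash operator, and commutes with the ``good'' Hecke operators. Multiplicity one for newforms then forces $F|_kW_N$ to be a scalar multiple of $F$, and the involution property pins the scalar down to $\pm 1$.

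For part (1), I would begin by computing the conjugation $W_N\ga W_N^{-1}$ for $\ga=\psmm{a}{b}{c}{d}\in\G_0(N)$. A direct matrix calculation gives
$$W_N\ga W_N^{-1}=\psmm{d}{-c/N}{-Nb}{a}\;,$$
which lies in $\G_0(N)$ since $N\mid c$ (so the upper right entry is an integer), the determinant is still $1$, and the lower left entry is divisible by $N$. Using the cocycle identity $F|_k(\ga_1\ga_2)=(F|_k\ga_1)|_k\ga_2$ for the slash operator (which holds because of the normalization by $(\det\ga)^{k/2}$), I can then write, for any $\ga\in\G_0(N)$,
$$(F|_kW_N)|_k\ga=F|_k(W_N\ga)=F|_k\bigl((W_N\ga W_N^{-1})W_N\bigr)=(F|_k(W_N\ga W_N^{-1}))|_kW_N=F|_kW_N\;,$$
using modularity of $F$ on $\G_0(N)$ at the last step. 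Holomorphy is preserved because $W_N$ acts on $\H$, and the cuspidal growth condition at $i\infty$ and at $0$ is exchanged by $\tau\mapsto-1/(N\tau)$, so the cuspidality condition stated in the text (rapid decay in the strip as $y\to\infty$ and as $y\to 0$) is symmetric and survives.

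For part (2), the key algebraic observation is that $W_N^2=\psmm{-N}{0}{0}{-N}=-NI$, which acts trivially on $\H$. A direct calculation with the slash operator (using $\det W_N^2=N^2$ and the factor $(c\tau+d)^{-k}=(-N)^{-k}$) shows
$$F|_kW_N^2=(N^2)^{k/2}(-N)^{-k}F=(-1)^kF=F$$
since $k$ is even. Thus $W_N$ is an honest involution on $S_k(\G_0(N))$, and its eigenvalues can only be $\pm 1$. It remains to show that an eigenform in the new space is necessarily an eigenvector. For this I would verify that $W_N$ commutes with the Hecke operators $T(n)$ for $\gcd(n,N)=1$: this follows either from the double-coset definition of $T(n)$ together with the conjugation identity above, or from computing $W_N\,T(n)\,W_N^{-1}$ acting on Fourier expansions. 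Consequently $F|_kW_N$ is again in $S_k^{\mathrm{new}}(\G_0(N))$ and has the same eigenvalues $a(n)$ for $\gcd(n,N)=1$ as $F$.

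Now I would invoke the Atkin--Lehner multiplicity one theorem for the new space, stated in the paragraph preceding the proposition: a newform is determined up to scalar by its system of eigenvalues $(a(n))_{\gcd(n,N)=1}$. Therefore $F|_kW_N=\lambda F$ for some scalar $\lambda\in\C$, and applying $W_N$ again together with $W_N^2=\mathrm{id}$ yields $\lambda^2=1$, hence $\lambda=\pm1$. The main obstacle here is genuinely the appeal to multiplicity one: unlike parts (1) and the involution computation, which are elementary matrix manipulations, multiplicity one is the substantive input from Atkin--Lehner--Li theory, and without it one could only conclude that $W_N$ preserves each Hecke eigenspace, not that the eigenspaces are one-dimensional in the new part.
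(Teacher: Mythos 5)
Your proposal is correct and follows essentially the same route as the paper: part (1) via the fact that $W_N$ normalizes $\G_0(N)$ (which you verify by the explicit conjugation $W_N\ga W_N^{-1}=\psmm{d}{-c/N}{-Nb}{a}$), and part (2) via the commutation of $W_N$ with the $T(n)$ for $\gcd(n,N)=1$, the Atkin--Lehner--Li multiplicity-one input quoted just before the proposition, and the involution property $W_N^2=-NI$ forcing the eigenvalue to be $\pm1$. You merely spell out details (the cocycle manipulation, the slash computation of $F|_kW_N^2$) that the paper leaves to the reader.
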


\begin{proof} (1): this simply follows from the fact that $W_N$ \emph{normalizes}
$\G_0(N)$: $W_N^{-1}\G_0(N)W_N=\G_0(N)$ as can easily be checked, and the
same result would be true for any other normalizing operator such as the
\emph{Atkin--Lehner} operators which we will not define. The operator $W_N$
is called the \emph{Fricke involution}.

(2): It is easy to show that $W_N$ commutes with all Hecke operators $T(n)$
when $\gcd(n,N)=1$, so by what we have mentioned above, if $F$ is an
eigenform in the new space it is automatically an eigenform for $W_N$,
and since $W_N$ acts as an involution, its eigenvalues are $\pm1$.\fp\end{proof}

The eigenforms can again be normalized with $a(1)=1$, and their $L$-function
has an Euler product, of a slightly more general shape:
$$L(F,s)=\prod_{p\nmid N}\dfrac{1}{1-a(p)p^{-s}+p^{k-1}p^{-2s}}\prod_{p\mid N}\dfrac{1}{1-a(p)p^{-s}}\;.$$
Proposition \ref{propga} is of course still valid, but is not the correct
normalization to obtain a functional equation. We replace it by
$$N^{s/2}(2\pi)^{-s}\Gamma(s)L(F,s)=\int_0^{\infty}F(it/N^{1/2})t^{s-1}\,dt\;,$$
which of course is trivial from the proposition by replacing $t$ by
$t/N^{1/2}$. Indeed, thanks to the above proposition we split the integral
at $t=1$, and using the action of $W_N$ we deduce the following proposition:

\begin{proposition} Let $F\in S_k^{\text{new}}(\G_0(N))$ be an eigenform for
  all Hecke operators, and write $F|_kW_N=\eps F$ for some $\eps=\pm1$.
  The $L$-function $L(F,s)$ extends to a holomorphic function in $\C$, and
  if we set $\Lambda(F,s)=N^{s/2}(2\pi)^{-s}\Gamma(s)L(F,s)$ we have the
  functional equation
  $$\Lambda(F,k-s)=\eps i^{-k}\Lambda(F,s)\;.$$
\end{proposition}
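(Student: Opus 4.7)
The plan is to mimic the Hecke unfolding proof given for $\G$ in Corollary \ref{corfeq}, with the role of the involution $\tau\mapsto-1/\tau$ now played by the Fricke involution $\tau\mapsto-1/(N\tau)$. The starting point is the integral representation
$$\Lambda(F,s)=N^{s/2}(2\pi)^{-s}\Gamma(s)L(F,s)=\int_0^{\infty}F(it/\sqrt{N})\,t^{s-1}\,dt\;,$$
which is already supplied just before the proposition and which converges for $\Re(s)>k/2+1$ by Hecke's cusp-form bound.

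First I would split the integral at $t=1$, writing $\Lambda(F,s)=\int_1^\infty+\int_0^1$. In the $\int_0^1$ piece I substitute $t\mapsto 1/t$, turning it into $\int_1^\infty F(i/(t\sqrt{N}))\,t^{-s-1}\,dt$. Now I apply the Fricke relation: from $F|_kW_N=\eps F$ we have $F(-1/(N\tau))=\eps N^{k/2}\tau^{k}F(\tau)$, and setting $\tau=it/\sqrt{N}$ gives $-1/(N\tau)=i/(t\sqrt{N})$, hence
$$F(i/(t\sqrt{N}))=\eps N^{k/2}(it/\sqrt{N})^{k}F(it/\sqrt{N})=\eps i^{k}t^{k}F(it/\sqrt{N})\;.$$
Plugging this in and combining the two halves yields the symmetric expression
$$\Lambda(F,s)=\int_1^{\infty}F(it/\sqrt{N})\bigl(t^{s-1}+\eps i^{k}t^{k-s-1}\bigr)\,dt\;.$$

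Because $F$ is a cusp form, $F(it/\sqrt{N})=O(e^{-ct})$ as $t\to\infty$ for some $c>0$, so the integrand decays exponentially and the right-hand side converges absolutely and uniformly on compact subsets of $\C$ in $s$. This analytically continues $\Lambda(F,s)$, and hence $L(F,s)=N^{-s/2}(2\pi)^{s}\Gamma(s)^{-1}\Lambda(F,s)$, to an entire function of $s$ (the $\Gamma(s)^{-1}$ cancels the would-be trivial zeros and leaves no poles since $\Lambda(F,s)$ is entire).

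Finally, to read off the functional equation I simply substitute $s\mapsto k-s$ in the symmetric formula, obtaining
$$\Lambda(F,k-s)=\int_1^{\infty}F(it/\sqrt{N})\bigl(t^{k-s-1}+\eps i^{k}t^{s-1}\bigr)\,dt\;,$$
and compare with $\eps i^{-k}\Lambda(F,s)=\int_1^\infty F(it/\sqrt{N})\bigl(\eps i^{-k}t^{s-1}+t^{k-s-1}\bigr)\,dt$, using $\eps^{2}=1$. These agree because $k$ is even, so $i^{-k}=i^{k}$, giving $\Lambda(F,k-s)=\eps i^{-k}\Lambda(F,s)$. There is no real obstacle here beyond bookkeeping; the only place that genuinely uses the newform hypothesis is the appeal to the preceding proposition that a new Hecke eigenform is automatically an eigenvector of $W_N$ with eigenvalue $\pm1$, which is exactly what allows the clean sign $\eps$ to appear in the functional equation.
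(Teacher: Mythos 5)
Your argument is correct and is exactly the paper's proof: starting from the integral representation with $t/\sqrt{N}$, splitting at $t=1$, substituting $t\mapsto 1/t$, and applying the Fricke eigenrelation $F|_kW_N=\eps F$ to reach the symmetric formula $\Lambda(F,s)=\int_1^{\infty}F(it/\sqrt{N})\bigl(t^{s-1}+\eps i^{k}t^{k-1-s}\bigr)\,dt$, from which continuation and the functional equation follow. You merely spell out the details (the evaluation of $F(i/(t\sqrt{N}))$, the exponential decay, and the parity remark $i^{-k}=i^{k}$ for even $k$) that the paper leaves implicit.
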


\begin{proof} Indeed, the trivial change of variable $t$ into $1/t$ proves the formula
$$N^{s/2}(2\pi)^{-s}\Gamma(s)L(F,s)=\int_1^{\infty}F(it/N^{1/2})(t^{s-1}+\eps i^kt^{k-1-s})\,dt\;,$$
from which the result follows.\fp\end{proof}

Once again, we leave to the reader to check that if
$F(\tau)=\sum_{n\ge1}a(n)q^n$ we have
$$L(F,k/2)=(1+\eps(-1)^{k/2})\sum_{n\ge1}\dfrac{a(n)}{n^{k/2}}e^{-2\pi n/N^{1/2}}P_{k/2}(2\pi n/N^{1/2})\;.$$

\subsection{Modular Forms with Characters}

Consider again the problem of sums of squares, in other words of the powers
of $\th(\tau)$. We needed to raise it to a power which is a multiple of $4$
so as to have a pure modularity property as we defined it above.
But consider the function $\th^2(\tau)$. The same proof that we mentioned for
$\th^4$ shows that for any $\ga=\psmm{a}{b}{c}{d}\in\G_0(4)$ we have
$$\th^2(\ga(\tau))=\leg{-4}{d}(c\tau+d)\th^2(\tau)\;,$$
where $\lgs{-4}{d}$ is the Legendre--Kronecker character (in this specific
case equal to $(-1)^{(d-1)/2}$ since $d$ is odd, being coprime to $c$).
Thus it satisfies a modularity property, except that it is ``twisted''
by $\lgs{-4}{d}$. Note that the equation makes sense since if we change
$\ga$ into $-\ga$ (which does not change $\ga(\tau)$), then $(c\tau+d)$
is changed into $-(c\tau+d)$, and $\lgs{-4}{d}$ is changed into
$\lgs{-4}{-d}=-\lgs{-4}{d}$. It is thus essential that the multiplier
that we put in front of $(c\tau+d)^k$, here $\lgs{-4}{d}$, has the same
parity as $k$.

We mentioned above that the study of modular forms on $\G_1(N)$ could be
reduced to those on $\G_0(N)$ ``with a twist''. Indeed, more precisely it is
trivial to show that
$$M_k(\G_1(N))=\bigoplus_{\chi(-1)=(-1)^k}M_k(\G_0(N),\chi)\;,$$
where $\chi$ ranges through all Dirichlet characters modulo $N$ of the
specified parity, and where $M_k(\G_0(N),\chi)$ is defined as the space of
functions $F$ satisfying
$$F(\ga(\tau))=\chi(d)(c\tau+d)^kF(\tau)$$ for all
$\ga=\psmm{a}{b}{c}{d}\in\G_0(N)$, plus the usual holomorphy and conditions
at the cusps (note that $\ga\mapsto\chi(d)$ is the group homomorphism
from $\G_0(N)$ to $\C^*$ which induces the above-mentioned isomorphism
from $\G_0(N)/\G_1(N)$ to $(\Z/N\Z)^*$).

\begin{exercise}\begin{enumerate}
  \item Show that a system of coset representatives of
    $\G_1(N)\backslash\G_0(N)$ is given by matrices $M_d=\psmm{u}{-v}{N}{d}$,
    where $0\le d<N$ such that $\gcd(d,N)=1$ and $u$ and $v$ are such that
    $ud+vN=1$.
  \item Let $f\in M_k(\G_1(N))$. Show that in the above decomposition
    of $M_k(\G_1(N))$ we have $f=\sum_{\chi(-1)=(-1)^k}f_{\chi}$ with
    $$f_{\chi}=\sum_{0\le d<N,\ \gcd(d,N)=1}\overline{\chi(d)}f|_kM_d\;.$$
\end{enumerate}\end{exercise}

These spaces are just as nice as the spaces $M_k(\G_0(N))$ and share exactly
the same properties. They have finite dimension (which we do not give),
there are Eisenstein series, Hecke operators, newforms, Euler products,
$L$-functions, etc... An excellent rule of thumb is simply to replace any
formula containing
$d^{k-1}$ (or $p^{k-1}$) by $\chi(d)d^{k-1}$ (or $\chi(p)p^{k-1}$). In fact,
in the Euler product of the $L$-function of an eigenform we do not need
to distinguish $p\nmid N$ and $p\mid N$ since we have
$$L(F,s)=\prod_{p\in P}\dfrac{1}{1-a(p)p^{-s}+\chi(p)p^{k-1-2s}}\;,$$
and $\chi(p)=0$ if $p\mid N$ since $\chi$ is a character modulo $N$.

Thus, for instance $\th^2\in M_1(\G_0(4),\chi_{-4})$, more generally
$\th^{4m+2}\in M_{2m+1}(\G_0(4),\chi_{-4})$, where we use the notation
$\chi_D$ for the Legendre--Kronecker symbol $\lgs{D}{d}$.

The space $M_1(\G_0(4),\chi_{-4})$ has dimension $1$, generated by the single
Eisenstein series
$$1+4\sum_{n\ge1}\sigma_0^{(-4)}(n)q^n\;,\text{\quad where\quad}\sigma_{k-1}^{(D)}(n)=\sum_{d\mid n}\leg{D}{d}d^{k-1}$$
according to our rule of thumb (which does not tell us the constant $4$).
Comparing constant coefficients, we deduce that $r_2(n)=4\sigma_0^{(-4)}(n)$,
where as usual $r_2(n)$ is the number of representations of $n$ as a sum of
two squares. This formula was in essence discovered by Fermat.

For $r_6(n)$ we must work slightly more: $\th^6\in M_3(\G_0(4),\chi_{-4})$,
and this space has dimension $2$, generated by two Eisenstein series.
The first is the natural ``rule of thumb'' one (which again does not give us
the constant)
$$F_1=1-4\sum_{n\ge1}\sigma_2^{(-4)}(n)q^n\;,$$
and the second is $$F_2=\sum_{n\ge1}\sigma_2^{(-4,*)}(n)q^n\;,$$
where $$\sigma_{k-1}^{(D,*)}=\sum_{d\mid n}\leg{D}{n/d}d^{k-1}\;,$$
a sort of dual to $\sigma_{k-1}^{(D)}$ (these are my notation).
Since $\th^6=1+12q+\cdots$, comparing the Fourier coefficients of $1$ and $q$
shows that $\th^6=F_1+16F_2$, so we deduce that
$$r_6(n)=-4\sigma_2^{(-4)}(n)+16\sigma_2^{(-4,*)}(n)
=\sum_{d\mid n}\left(16\leg{-4}{n/d}-4\leg{-4}{d}\right)d^2\;.$$

\subsection{Remarks on Dimension Formulas and Galois Representations}

The explicit dimension formulas alluded to above are valid for $k\in\Z$
\emph{except} for $k=1$; in addition, thanks to the theorems mentioned below,
we also have explicit dimension formulas for $k\in 1/2+\Z$. Thus, the
theory of modular forms of weight $1$ is very special, and their general
construction more difficult.

This is also reflected in the construction of \emph{Galois representations}
attached to modular eigenforms, which is an important and deep subject that we
will not mention in this course, except to say the following: in weight
$k\ge2$ these representations are $\ell$-adic (or modulo $\ell$), i.e., with
values in $\GL_2(\Q_\ell)$ (or $\GL_2(\F_\ell)$), while in weight $1$ they are
\emph{complex} representations, i.e., with values in $\GL_2(\C)$. The
construction in weight $2$ is quite old, and comes directly from the
construction of the so-called \emph{Tate module} $T(\ell)$
attached to an Abelian variety (more precisely the Jacobian of a modular
curve), while the construction in higher weight, due to Deligne, is much
deeper since it implies the third Ramanujan conjecture $|\tau(p)|<p^{11/2}$.
Finally, the case of weight $1$ is due to Deligne--Serre, in fact using the
construction for $k\ge2$ and congruences.

\subsection{Origins of Modular Forms}

Modular forms are all pervasive in mathematics, physics, and combinatorics.
We just want to mention the most important constructions:

\begin{itemize}
\item Historically, the first modular forms were probably
  \emph{theta functions} (this dates back to J.~Fourier at the end of the
  18th century in his treatment of the \emph{heat equation}) such as
  $\th(\tau)$ seen above, and more generally theta functions associated to
  \emph{lattices}. These functions can have integral or half-integral weight
  (see below) depending on whether the number of variables which occur
  (equivalently, the dimension of the lattice) is even or odd.
  Later, these theta functions were generalized by introducing
  \emph{spherical polynomials} associated to the lattice.

  For example, the theta function associated to the lattice $\Z^2$ is simply
  $f(\tau)=\sum_{(x,y)\in\Z^2}q^{x^2+y^2}$, which is clearly equal to $\th^2$,
  so belongs to $M_1(\G_0(4),\chi_{-4})$. But we can also consider for instance
  $$f_5(\tau)=\sum_{(x,y)\in\Z^2}(x^4-6x^2y^2+y^4)q^{x^2+y^2}\;,$$
  and show that $f_5\in S_5(\G_0(4),\chi_{-4})$:

  \begin{exercise}\begin{enumerate}
    \item Using the notation and results of Exercise \ref{ex:RC},
    show that $[\th,\th]_2=cf_5$ for a suitable constant $c$, so that
    in particular $f_5\in S_5(\G_0(4),\chi_{-4})$.
  \item Show that the polynomial $P(x,y)=x^4-6x^2y^2+y^4$ is a
    \emph{spherical polynomial}, in other words that $D(P)=0$,
    where $D$ is the Laplace differential operator
    $D=\partial^2/\partial^2x+\partial^2/\partial^2y$.
    \end{enumerate}
  \end{exercise}
\item The second occurrence of modular forms is probably
  \emph{Eisenstein series}, which in fact are the first that we encountered
  in this course. We have only seen the most basic Eisenstein series $G_k$
  (or normalized versions) on the full modular group and a few on $\G_0(4)$,
  but there are very general constructions over any space such as
  $M_k(\G_0(N),\chi)$.
  Their Fourier expansions can easily be explicitly computed and are similar
  to what we have given above. More difficult is the case when $k$ is only
  half-integral, but this can also be done.

  As we have seen, an important generalization of Eisenstein series are
  \emph{Poincar\'e series}, which an also be defined over any space as above.
\item A third important construction of modular forms comes from the
  Dedekind eta function $\eta(\tau)$ defined above. In itself it has a
  complicated \emph{multiplier system}, but if we define an \emph{eta quotient}
  as $F(\tau)=\prod_{m\in I}\eta(m\tau)^{r_m}$ for a certain set $I$ of
  positive integers and exponents $r_m\in\Z$, then it is not difficult to
  write necessary and sufficient conditions for $F$ to belong to some
  $M_k(\G_0(N),\chi)$. The first example that we have met is of course
  the Ramanujan delta function $\Delta(\tau)=\eta(\tau)^{24}$. Other examples
  are for instance $\eta(\tau)\eta(23\tau)\in S_1(\G_0(23),\chi_{-23})$,
  $\eta(\tau)^2\eta(11\tau)^2\in S_2(\G_0(11))$, and
  $\eta(2\tau)^{30}/\eta(\tau)^{12}\in S_9(\G_0(8),\chi_{-4})$.
\item Closely related to eta quotients are $q$-identities involving the
  $q$-Pochhammer symbol $(q)_n$ and generalizing those seen in Exercise
  \ref{expoch}, many of which give modular forms not related to the
  eta function.
\item A much deeper construction comes from algebraic geometry: by the
  modularity theorem of Wiles et al., to any elliptic curve defined over $\Q$
  is associated a modular form in $S_2(\G_0(N))$ which is a normalized
  Hecke eigenform, where $N$ is the so-called \emph{conductor} of the curve.
  For instance the eta quotient of level $11$ just seen above is the
  modular form associated to the isogeny class of the elliptic curve of
  conductor $11$ with equation $y^2+y=x^3-x^2-10x-20$.
\end{itemize}

\section{More General Modular Forms}

In this brief section, we will describe modular forms of a more general
kind than those seen up to now.

\subsection{Modular Forms of Half-Integral Weight}

Coming back again to the function $\th$, the formulas seen above
suggest that $\th$ itself must be considered a modular form, of weight
$1/2$. We have already mentioned that
$$\th^2(\ga(\tau))=\leg{-4}{d}(c\tau+d)\th^2(\tau)\;.$$
But what about $\th$ itself? For this, we must be very careful about
the determination of the square root:

Notation: $z^{1/2}$ will \emph{always} denote the principal determination
of the square root, i.e., such that $-\pi/2<\Arg(z^{1/2})\le\pi/2$. For
instance $(2i)^{1/2}=1+i$, $(-1)^{1/2}=i$. Warning: we do not in general
have $(z_1z_2)^{1/2}=z_1^{1/2}z_2^{1/2}$, but only up to sign.
As a second notation, when $k$ is odd, $z^{k/2}$ will \emph{always}
denote $(z^{1/2})^k$ and \emph{not} $(z^k)^{1/2}$ (for instance
$(2i)^{3/2}=(1+i)^3=-2+2i$, while $((2i)^3)^{1/2}=2-2i$).

Thus, let us try and take the square root of the modularity equation for
$\th^2$:
$$\th(\ga(\tau))=v(\ga,\tau)\leg{-4}{d}^{1/2}(c\tau+d)^{1/2}\;,$$
where $v(\ga,\tau)=\pm1$ and may depend on $\ga$ and $\tau$. A detailed
study of Gauss sums shows that $v(\ga,\tau)=\lgs{-4c}{d}$, the general
Kronecker symbol, so that the modularity equation for $\th$ is,
for any $\ga\in\G_0(4)$:
$$\th(\ga(\tau))=v_{\th}(\ga)(c\tau+d)^{1/2}\th(\tau)\text{\quad with\quad}v_{\th}(\ga)=\leg{c}{d}\leg{-4}{d}^{-1/2}\;.$$
Note that there is something very subtle going on here: this complicated
\emph{theta multiplier system} $v_{\th}(\ga)$ must satisfy a complicated
\emph{cocycle relation} coming from the trivial identity
$\th((\ga_1\ga_2)(\tau))=\th(\ga_1(\ga_2(\tau)))$ which can be shown to be
equivalent to the general \emph{quadratic reciprocity law}.

The following definition is due to G.~Shimura:

\begin{definition} Let $k\in1/2+\Z$. A function $F$ from $\H$ to $\C$
  will be said to be a modular form of (half integral) weight $k$ on
  $\G_0(N)$ with character $\chi$ if for all
  $\ga=\psmm{a}{b}{c}{d}\in\G_0(N)$ we have
  $$F(\ga(\tau))=v_{\th}(\ga)^{2k}\chi(d)(c\tau+d)^kF(\tau)\;,$$
  and if the usual holomorphy and conditions at the cusps are satisfied
  (equivalently if $F^2\in M_{2k}(\G_0(N),\chi^2\chi_{-4})$).
\end{definition}

Note that if $k\in1/2+\Z$ we have $v_{\th}(\ga)^{4k}=\chi_{-4}$, which explains
the extra factor $\chi_{-4}$ in the above definition.

Since $v_{\th}(\ga)$ is defined only for $\ga\in\G_0(4)$ we need
$\G_0(N)\subset\G_0(4)$, in other words $4\mid N$. In addition, by definition
$v_{\th}(\ga)(c\tau+d)^{1/2}=\th(\ga(\tau))/\th(\tau)$ is invariant if we
change $\ga$ into $-\ga$, so if $k\in1/2+\Z$ the same is true of
$v_{\th}(\ga)^{2k}(c\tau+d)^k$, hence it follows that in the above definition
we must have $\chi(-d)=\chi(d)$, i.e., $\chi$ must be an \emph{even} character
($\chi(-1)=1$).

As usual, we denote by $M_k(\G_0(N),\chi)$ and $S_k(\G_0(N),\chi)$ the
spaces of modular and cusp forms. The theory is more difficult than the theory
in integral weight, but is now well developed. We mention a few items:

\begin{enumerate}\item There is an explicit but more complicated
  \emph{dimension formula} due to J.~Oesterl\'e and the author.
\item By a theorem of Serre--Stark, modular forms of weight $1/2$ are
  simply linear combinations of \emph{unary theta functions} generalizing the
  function $\th$ above.
\item One can easily construct Eisenstein series, but the computation
  of their Fourier expansion, due to Shimura and the author, is more
  complicated.
\item As usual, if we can express $\th^m$ solely in terms of Eisenstein series,
  this leads to explicit formulas for $r_m(n)$, the number of representation
  of $n$ as a sum of $m$ squares. Thus, we obtain explicit formulas for
  $r_3(n)$ (due to Gauss), $r_5(n)$ (due to Smith and Minkowski), and
  $r_7(n)$, so if we complement the formulas in integral weight, we have
  explicit formulas for $r_m(n)$ for $1\le m\le 8$ and $m=10$.
\item The deeper part of the theory, which is specific to the half-integral
  weight case, is the existence of \emph{Shimura lifts} from
  $M_k(\G_0(N),\chi)$ to $M_{2k-1}(\G_0(N/2),\chi^2)$, the description of
  the \emph{Kohnen subspace} $S_k^+(\G_0(N),\chi)$ which allows both the
  Shimura lift to go down to level $N/4$, and also to define a suitable
  Atkin--Lehner type new space, and the deep results of Waldspurger, which nicely
  complement the work of Shimura on lifts.
\end{enumerate}

\medskip

We could try to find other types of interesting modularity properties
than those coming from $\th$. For instance, we have seen that the Dedekind
eta function is a modular form of weight $1/2$ (not in Shimura's sense),
and more precisely it satisfies the following modularity equation, now
for any $\ga\in\G$:
$$\eta(\ga(\tau))=v_{\eta}(\ga)(c\tau+d)^{1/2}\eta(\tau)\;,$$
where $v_{\eta}(\ga)$ is a very complicated $24$-th root of unity.
We could of course define $\eta$-modular forms of half-integral weight
$k\in1/2+\Z$ by requiring $F(\ga(\tau))=v_{\eta}(\ga)^{2k}(c\tau+d)^kF(\tau)$,
but it can be shown that this would not lead to any interesting theory
(more precisely the only interesting functions would be \emph{eta-quotients}
$F(\tau)=\prod_m\eta(m\tau)^{r_m}$, which can be studied directly without
any new theory.

Note that there are functional relations between $\eta$ and $\th$:

\begin{proposition} We have
  $$\th(\tau)=\dfrac{\eta^2(\tau+1/2)}{\eta(2\tau+1)}=\dfrac{\eta^5(2\tau)}{\eta^2(\tau)\eta^2(4\tau)}\;.$$\end{proposition}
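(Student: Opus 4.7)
The plan is to verify both equalities by directly substituting the product expansion $\eta(\tau)=q^{1/24}\prod_{n\ge 1}(1-q^n)$ and comparing with the product representation
$$\th(\tau)=\prod_{n\ge 1}(1-q^{2n})(1+q^{2n-1})^2,$$
which follows from the Jacobi triple product of the preceding theorem by the substitution $q\mapsto q^2$, $u\mapsto -1/q$: the right-hand side then collapses to $\sum_{k\ge 0}(q^{k^2}+q^{(k+1)^2})=\sum_{k\in\Z}q^{k^2}$, while the left-hand side becomes $\prod(1-q^{2n})(1+q^{2n-1})^2$. Once these two formulas are in hand, the rest is purely algebraic.

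For the first equality, I would write out
$$\eta(\tau+\tfrac{1}{2})=e^{\pi i/24}q^{1/24}\prod_{n\ge 1}\bigl(1-(-1)^nq^n\bigr),\qquad \eta(2\tau+1)=e^{\pi i/12}q^{1/12}\prod_{n\ge 1}(1-q^{2n})$$
directly from the definition. Squaring the first and dividing by the second, the factors $e^{\pi i/12}$ and $q^{1/12}$ cancel exactly. Splitting the remaining product $\prod(1-(-1)^nq^n)^2$ by parity of $n$ yields $\prod_{m\ge 1}(1-q^{2m})^2\prod_{m\ge 1}(1+q^{2m-1})^2$, and dividing by $\prod(1-q^{2n})$ leaves exactly $\prod(1-q^{2n})(1+q^{2n-1})^2=\th(\tau)$.

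For the second equality, the $q$-prefactor of $\eta^5(2\tau)/(\eta^2(\tau)\eta^2(4\tau))$ is $q^{5/12-1/12-4/12}=1$, so the ratio equals $\prod(1-q^{2n})^5/[\prod(1-q^n)^2\prod(1-q^{4n})^2]$. I would then use the identity $1+q^{2n-1}=(1-q^{4n-2})/(1-q^{2n-1})$ together with the parity decompositions $\prod(1-q^n)=\prod(1-q^{2n})\prod(1-q^{2n-1})$ and $\prod(1-q^{2n})=\prod(1-q^{4n})\prod(1-q^{4n-2})$ to rewrite $\prod(1+q^{2n-1})^2$ as $\prod(1-q^{2n})^4/[\prod(1-q^n)^2\prod(1-q^{4n})^2]$. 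Multiplying by the remaining $\prod(1-q^{2n})$ from the product form of $\th(\tau)$ then matches both sides.

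The main obstacle here is purely bookkeeping: tracking the $24$th roots of unity, the fractional powers of $q$, and the reindexings of products by residue classes modulo $2$ and~$4$ without error. There is no conceptual content beyond the Jacobi triple product identity already at our disposal.
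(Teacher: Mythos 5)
Your proof is correct: the substitution $q\mapsto q^2$, $u\mapsto-1/q$ in the triple product identity does give $\th(\tau)=\prod_{n\ge1}(1-q^{2n})(1+q^{2n-1})^2$, the $q^{1/24}$-prefactors and roots of unity cancel exactly as you say in both quotients, and the parity/mod-$4$ splittings $\prod(1-q^n)=\prod(1-q^{2n})\prod(1-q^{2n-1})$, $\prod(1-q^{2n})=\prod(1-q^{4n})\prod(1-q^{4n-2})$ together with $1+q^{2n-1}=(1-q^{4n-2})/(1-q^{2n-1})$ close both identities. However, this is a genuinely different route from the one the paper intends. The paper (in the exercise following the proposition) proves the identities by \emph{modularity}: one first checks, using $\eta(\tau+1)=e^{2\pi i/24}\eta(\tau)$ and $\eta(-1/\tau)=(\tau/i)^{1/2}\eta(\tau)$, that the two eta quotients satisfy the same transformation laws as $\th$ under $T$ and $W_4=\psmm{0}{-1}{4}{0}$, hence are weakly modular of weight $1/2$ on $\G_0(4)$; one then verifies holomorphy on $\H$ and at the cusps and concludes the equalities (e.g.\ by comparing Fourier expansions in a space which is essentially one-dimensional, or by noting the ratio is a modular function with value $1$ at infinity and no zeros or poles). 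The paper's second step then \emph{deduces} the product expansion of $\th$ from the proposition, whereas you run the logic in the opposite direction, obtaining that product expansion first from the triple product and verifying the identities by pure power-series algebra. Your approach is more elementary and self-contained given the triple product identity already proved in the text; the paper's approach is less computational, showcases the transformation theory of $\eta$ and weight-$1/2$ forms on $\G_0(4)$, and generalizes to situations where direct product manipulation is not available. Both are complete proofs.
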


\begin{exercise}\begin{enumerate}\item Prove these relations in the following
way: first show that the right-hand sides satisfy the same modularity
equations as $\th$ for $T=\psmm{1}{1}{0}{1}$ and $W_4=\psmm{0}{-1}{4}{0}$, so
in particular that they are weakly modular on $\G_0(4)$, and second show that
they are really modular forms, in other words that they are holomorphic on
$\H$ and at the cusps.
\item Using the definition of $\eta$, deduce two \emph{product expansions} for
$\th(\tau)$.\end{enumerate}
\end{exercise}

We could also try to study modular forms of fractional or even real weight
$k$ not integral or half-integral, but this would lead to functions with no
interesting \emph{arithmetical} properties.

In a different direction, we can relax the condition of holomorphy (or
meromorphy) and ask that the functions be eigenfunctions of the
\emph{hyperbolic Laplace operator}
$$\Delta=-y^2\left(\dfrac{\partial^2}{\partial^2 x}+\dfrac{\partial^2}{\partial^2 y}\right)=-4y^2\dfrac{\partial^2}{\partial\tau\partial\ov{\tau}}$$
which can be shown to be invariant under $\G$ (more generally under
$\SL_2(\R)$) together with suitable boundedness conditions. This leads to the
important theory of \emph{Maass forms}.
The case of the eigenvalue $0$ reduces to ordinary modular forms since
$\Delta(F)=0$ is equivalent to $F$ being a linear combination of a
holomorphic and antiholomorphic (i.e., conjugate to a holomorphic) function,
each of which will be modular or conjugate of modular.

The case of the eigenvalue $1/4$ also leads to functions having nice
arithmetical properties, but all other eigenvalues give functions with
(conjecturally) transcendental coefficients, but these functions are useful
in number theory for other reasons which we cannot explain here. Note that a
famous conjecture of Selberg asserts that for \emph{congruence subgroups} there
are no eigenvalues $\la$ with $0<\la<1/4$. For instance, for the full modular
group, the smallest nonzero eigenvalue is $\la=91.1412\cdots$, which is
quite large.

\begin{exercise} Using the fact that $\Delta$ is invariant under $\G$ show
that $\Delta(\Im(\ga(\tau)))=s(1-s)\Im(\ga(\tau))$ and deduce that the
nonholomorphic Eisenstein series $E(s)$ introduced in Definition
\ref{def:nonhol} is an eigenfunction of the hyperbolic Laplace operator with
eigenvalue $s(1-s)$ (note that it does not satisfy the necessary boundedness
conditions, so it is not a Maass form: the functions $E(s)$ with $\Re(s)=1/2$
constitute what is called the \emph{continuous spectrum}, and the Maass forms
the \emph{discrete spectrum} of $\Delta$ acting on $\G\backslash\H$).
\end{exercise}

\subsection{Modular Forms in Several Variables}\label{sec:several}

The last generalization that we want to mention (there are much more!) is to
several variables. The natural idea is to consider holomorphic functions
from $\H^r$ to $\C$, now for some $r>1$, satisfying suitable modularity
properties. If we simply ask that $\ga\in\G$ (or some subgroup) acts
component-wise, we will not obtain anything interesting. The right way to do
it, introduced by Hilbert--Blumenthal, is to consider a \emph{totally real}
number field $K$ of degree $r$, and denote by $\G_K$ the group of matrices
$\ga=\psmm{a}{b}{c}{d}\in\SL_2(\Z_K)$, where $\Z_K$ is the ring of algebraic
integers of $K$ (we could also consider the larger group $\GL_2(\Z_K)$, which
leads to a very similar theory). Such a $\ga$ has $r$ \emph{embeddings}
$\ga_i$ into $\SL_2(\R)$, which we will denote by
$\ga_i=\psmm{a_i}{b_i}{c_i}{d_i}$, and the correct definition is to ask that
$$F(\ga_1(\tau_1),\cdots,\ga_r(\tau_r))=(c_1\tau_1+d_1)^k\cdots(c_r\tau_r+d_r)^kF(\tau_1,\dots,\tau_r)\;.$$
Note that the restriction to totally real number fields is due to the fact
that for $\ga_i$ to preserve the upper-half plane it is necessary that
$\ga_i\in\SL_2(\R)$. Note also that the $\ga_i$ are \emph{not} independent,
they are conjugates of a single $\ga\in\SL_2(\Z_K)$.

A holomorphic function satisfying the above is called a
\emph{Hilbert-Blumenthal} modular form (of \emph{parallel weight $k$}, one
can also consider forms where the exponents for the different embeddings
are not equal), or more simply a Hilbert modular form
(note that there are no ``conditions at infinity'', since one can prove that
they are automatically satisfied unless $K=\Q$).

Since $T=\psmm{1}{1}{0}{1}\in\SL_2(\Z_K)$ is equal to all its conjugates, such
modular forms have Fourier expansions, but using the action of
$\psmm{1}{\al}{0}{1}$ with $\al\in\Z_K$ it is easy to show that these
expansions are of a special type, involving the \emph{codifferent}
${\mathfrak d}^{-1}$ of $K$, which is the fractional ideal of $x\in K$ such
that $\Tr(x\Z_K)\subset\Z$, where $\Tr$ denotes the trace.

One can construct Eisenstein series, here called Hecke--Eisenstein series,
and compute their Fourier expansion. One of the important consequences of this
computation is that it gives an explicit formula for the value $\z_K(1-k)$
of the \emph{Dedekind zeta function} of $K$ at negative integers (hence by
the functional equation of $\z_K$, also at positive even integers), and in
particular it proves that these values are \emph{rational numbers}, a theorem
due to C.-L.~Siegel as an immediate consequence of Theorem \ref{thmsieg}.
An example is as follows:

\begin{proposition} Let $K=\Q(\sqrt{D})$ be a real quadratic field with
  $D$ a fundamental discriminant. Then:
  \begin{enumerate}
  \item We have
    \begin{align*}\z_K(-1)&=\dfrac{1}{60}\sum_{|s|<\sqrt{D}}\sigma_1\left(\dfrac{D-s^2}{4}\right)\;,\\
      \z_K(-3)&=\dfrac{1}{120}\sum_{|s|<\sqrt{D}}\sigma_3\left(\dfrac{D-s^2}{4}\right)\;.\end{align*}
  \item We also have formulas such as
      \begin{align*}
        \sum_{|s|<\sqrt{D}}\sigma_1(D-s^2)&=60\left(9-2\leg{D}{2}\right)\z_K(-1)\;,\\
        \sum_{|s|<\sqrt{D}}\sigma_3(D-s^2)&=120\left(129-8\leg{D}{2}\right)\z_K(-3)\;.\\
      \end{align*}
  \end{enumerate}
\end{proposition}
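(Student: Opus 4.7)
The proof is the standard diagonal–restriction device: one starts with a Hilbert modular Eisenstein series for $K$, restricts to $\tau_1=\tau_2=\tau$, and uses the very small dimension of the target space $M_{2k}(\G)$ to extract relations on Fourier coefficients. Siegel's Theorem \ref{thmsieg} is exactly the mechanism by which the constant term (which contains $\zeta_K(1-k)$) is pinned down by a single positive coefficient.

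\textbf{Step 1 (Hecke--Eisenstein series).} For each even $k\ge 2$, introduce the weight--$k$ Hecke--Eisenstein series $\mathcal{E}_k$ on $\SL_2(\Z_K)$, whose Fourier expansion with respect to the codifferent $\mathfrak{d}^{-1}=(1/\sqrt{D})\Z_K$ has the form
\[
\mathcal{E}_k(\tau_1,\tau_2)=\kappa_k\,\zeta_K(1-k)+\sum_{\substack{\nu\in\mathfrak{d}^{-1}\\ \nu\gg 0}}\sigma^{K}_{k-1}(\nu\mathfrak{d})\,e^{2\pi i(\nu_1\tau_1+\nu_2\tau_2)},
\]
with $\sigma^{K}_{k-1}(\mathfrak{c})=\sum_{\mathfrak{b}\mid\mathfrak{c}}N(\mathfrak{b})^{k-1}$ and an explicit nonzero $\kappa_k$. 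Setting $\tau_1=\tau_2=\tau$ gives $f_k(\tau):=\mathcal{E}_k(\tau,\tau)\in M_{2k}(\G)$, with constant term $a_0=\kappa_k\zeta_K(1-k)$ and $n$th coefficient $a_n=\sum_{\mathrm{Tr}(\nu)=n,\ \nu\gg 0}\sigma^{K}_{k-1}(\nu\mathfrak{d})$.

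\textbf{Step 2 (Parametrizing $a_1$).} Write $\nu=x/\sqrt{D}$ with $x=(s+\sqrt{D})/2\in\Z_K$. The condition $\nu\in\mathfrak{d}^{-1}$ forces the expected parity of $s$ (odd if $D\equiv 1\pmod 4$, even if $D\equiv 0\pmod 4$); $\mathrm{Tr}(\nu)=1$ is automatic; and $\nu\gg 0$ is $|s|<\sqrt{D}$. Then $\nu\mathfrak{d}=(x)$ is principal of norm $(D-s^2)/4$. I must then verify the arithmetic identity
\[
\sigma^{K}_{k-1}\bigl((x)\bigr)=\sigma_{k-1}\bigl((D-s^2)/4\bigr).
\]
This is where the real work sits: any prime $p\mid (D-s^2)/4$ satisfies $s^2\equiv D\pmod{4p}$, so $(D/p)\ne -1$, so $p$ is split or ramified in $K$; using that $D$ is a fundamental discriminant (hence $p^2\nmid D$ for odd $p$) one shows $p\nmid x$, which for split $p$ concentrates $(x)_p$ on a single prime above $p$. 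The prime $2$ is handled separately according to $D\bmod 8$. In every admissible case, ideal--divisors of $(x)$ biject with positive integer divisors of $(D-s^2)/4$ preserving norm, and the identity follows.

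\textbf{Step 3 (Concluding (1)).} Since $\dim M_4(\G)=\dim M_8(\G)=1$ (Corollary~\ref{cordim}), the forms $f_2$ and $f_4$ are scalar multiples of $E_4$ and $E_8$ respectively, so $a_1=a_0\cdot(-4k/B_{2k})$. Equivalently, apply Siegel's Theorem~\ref{thmsieg} directly to $f_k$: for $r=1$ the constant term is $a_0=(c^{2k}_{-1}/c^{2k}_0)\,a_1$, where $c^{2k}_{-n}$ are read off from the Laurent expansion of $E_{10}/\Delta$ (resp.\ $E_6/\Delta$). Substituting $a_0=\kappa_k\zeta_K(1-k)$ and the expression of $a_1$ from Step~2 produces the identities in (1), with the universal factors $60$ and $120$.

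\textbf{Step 4 (Part (2), via level $2$).} The $q^1$ coefficient of $f_k$ only sees divisors of $(D-s^2)/4$, and to recover divisors of the undivided quantity $D-s^2$ one must work on a congruence subgroup. Consider an auxiliary Hilbert Eisenstein series of conductor dividing $(2)\subset\Z_K$, whose diagonal restriction $g_k\in M_{2k}(\G_0(2))$. Since this space is small (for $2k=4$ one computes $\dim M_4(\G_0(2))=2$ from Theorem~\ref{thmdim}, spanned by $E_4(\tau)$ and $E_4(2\tau)$), $g_k$ is an explicit linear combination of $f_k(\tau)$ and $f_k(2\tau)$; matching with the independent parametrization of totally positive $\nu$ of trace $1$ in the enlarged lattice $\Z+\Z\sqrt{D}$ yields an expression whose $q^1$ coefficient is $\sum_{|s|<\sqrt{D}}\sigma_{k-1}(D-s^2)$. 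The Kronecker symbol $\bigl(\tfrac{D}{2}\bigr)$ enters precisely through the local Euler factor at $2$ (which encodes whether $2$ is split, inert, or ramified in $K$), producing the coefficients $9-2(D/2)$ and $129-8(D/2)$ after comparing the constant term to $\kappa_k\zeta_K(1-k)$.

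\textbf{Main obstacle.} The structural part (finite dimension + Hilbert Eisenstein series + diagonal restriction) is essentially automatic; the genuine content is Step~2, the passage from the ideal--theoretic divisor sum $\sigma^{K}_{k-1}((x))$ to the rational divisor sum $\sigma_{k-1}((D-s^2)/4)$, which hinges on the fact that $D$ being a fundamental discriminant rules out inert primes in the norm. Tracking the precise numerical constants $\kappa_k$ in Step~1, and in (2) the Kronecker factor at $2$ from the level--raising, is tedious but mechanical once the Fourier expansion of $\mathcal{E}_k$ is in hand.
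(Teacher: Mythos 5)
Your Steps 1--3 are essentially the argument the text has in mind for part (1) (the paper itself gives no detailed proof: it presents the proposition as a consequence of the Fourier expansion of Hecke--Eisenstein series together with Theorem \ref{thmsieg}). The diagonal restriction of the weight-$k$ Hecke--Eisenstein series for $k=2,4$, the parametrization of trace-one totally positive $\nu\in\mathfrak d^{-1}$ by $x=(s+\sqrt D)/2$ with $|s|<\sqrt D$, the identity $\sum_{\mathfrak b\mid(x)}N(\mathfrak b)^{k-1}=\sigma_{k-1}\bigl((D-s^2)/4\bigr)$ (correct, since no inert prime divides $(D-s^2)/4$, $x$ is primitive in $\Z_K$, and ramified primes contribute exactly their multiplicity in the norm), and then one-dimensionality of $M_4(\G)$, $M_8(\G)$ (Corollary \ref{cordim}) or Theorem \ref{thmsieg} with $r=1$: this is the intended route, and apart from the routine determination of $\kappa_k$ (and the standard caveat that the weight-$2$ Hilbert Eisenstein series requires Hecke's trick, though it is holomorphic for a real quadratic field) part (1) is in order.

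Part (2) is where there is a genuine gap. Step 4 rests entirely on the unproved claim that some Eisenstein series ``of conductor dividing $(2)$'' has a diagonal restriction in $M_{2k}(\G_0(2))$ whose $q^1$-coefficient equals $\sum_{|s|<\sqrt D}\sigma_{k-1}(D-s^2)$, and that claim is precisely the crux. The only identification technique you actually justify (Step 2) fails here: the relevant elements are now $x=s+\sqrt D$ with $s$ of either parity, and such $x$ is in general \emph{not} primitive in $\Z_K$; e.g.\ for $D=5$, $s=1$ one has $(1+\sqrt5)=(2)$ in $\Z_K$, whose ideal divisor sum is $1+4=5$, not $\sigma_1(4)=7$. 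So $\sigma_{k-1}(D-s^2)$ is not a $\Z_K$-ideal-theoretic quantity, and one must genuinely work with the order $\Z[\sqrt D]$ of conductor $2$ (its dual lattice $(1/(2\sqrt D))\Z[\sqrt D]$, sums over sublattices or proper ideals of the order), or equivalently with the corresponding level structure --- and the natural elliptic level there is $4$ rather than $2$ (compare $\dim M_4(\G_0(4))=3$ from Theorem \ref{thmdim}); alternatively one can argue with half-integral weight Eisenstein series on $\G_0(4)$. Note also that the even-$s$ contribution $\sum_{2\mid s}\sigma_{k-1}(D-s^2)$ is a new identity, not a formal consequence of the sums in part (1), so the factors $9-2\leg{D}{2}$ and $129-8\leg{D}{2}$ are not ``mechanical bookkeeping'' on top of Step 3: they are exactly the output of the local computation at $2$ (split, inert or ramified) in the constant term and in the coefficient identification, neither of which your sketch carries out. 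To complete (2) you need to construct the order-level Eisenstein series explicitly, prove the analogue of Step 2 for it, and compute its constant term; as written, Step 4 asserts the conclusion rather than proving it.
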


We can of course reformulate these results in terms of $L$-functions by using
$L(\chi_D,-1)=-12\z_K(-1)$ and $L(\chi_D,-3)=120\z_K(-3)$, where as usual
$\chi_D$ is the quadratic character modulo $D$.

\begin{exercise}
  Using Exercise \ref{exr4r8} and the above formulas, show that the number
  $r_5(D)$ of representations of $D$ as a sum of $5$ squares is given by
  $$r_5(D)=480\left(5-2\leg{D}{2}\right)\z_K(-1)=-40\left(5-2\leg{D}{2}\right)L(\chi_D,-1)\;.$$
\end{exercise}

Note that this formula can be generalized to arbitrary $D$, and is due to
Smith and (much later) to Minkowski. There also exists a similar formula
for $r_7(D)$: when $-D$ (\emph{not} $D$) is a fundamental discriminant
$$r_7(D)=-28\left(41-4\leg{D}{2}\right)L(\chi_{-D},-2)\;.$$

\smallskip

Note also that if we restrict to the \emph{diagonal} $\tau_1=\cdots=\tau_r$,
a Hilbert modular form of (parallel) weight $k$ gives rise to an ordinary
modular form of weight $kr$.

\medskip

We finish this section with some terminology with no explanation: if $K$ is
\emph{not} a totally real number field, one can also define modular forms,
but they will not be defined on products of the upper-half plane $\H$ alone,
but will also involve the \emph{hyperbolic $3$-space} $\H_3$. Such forms
are called \emph{Bianchi} modular forms.

A different generalization, close to the Weierstrass $\wp$-function seen above,
is the theory of \emph{Jacobi forms}, due to M.~Eichler and D.~Zagier. One of
the many interesting aspects of this theory is that it mixes in a nontrivial
way properties of forms of integral weight with forms of half-integral weight.

Finally, we mention \emph{Siegel modular forms}, introduced by C.-L.~Siegel,
which are defined on higher-dimensional \emph{symmetric spaces}, on which the
\emph{symplectic groups} $\Sp_{2n}(\R)$ act. The case $n=1$ gives ordinary
modular forms, and the next simplest, $n=2$, is
closely related to Jacobi forms since the Fourier coefficients of Siegel
modular forms of degree $2$ can be expressed in terms of Jacobi forms.

\section{Some Pari/GP Commands}

There exist three software packages which are able to compute with modular
forms: {\tt magma}, {\tt Sage}, and {\tt Pari/GP} since the spring of 2018.
We give here some basic {\tt Pari/GP} commands with little or no explanation
(which is available by typing {\tt ?} or {\tt ??}): we encourage the reader to
read the tutorial {\tt tutorial-mf} available with the distribution and to
practice with the package, since it is an excellent way to learn about modular
forms. All commands begin with the prefix {\tt mf}, with the exception of
{\tt lfunmf} which more properly belongs to the $L$-function package.

Creation of modular forms: {\tt mfDelta} (Ramanujan Delta), {\tt mfTheta}
(ordinary theta function), {\tt mfEk} (normalized Eisenstein series $E_k$),
more generally {\tt mfeisenstein}, {\tt mffrometaquo} (eta quotients),
{\tt mffromqf} (theta function of lattices with or without spherical
polynomial), {\tt mffromell} (from elliptic curves over $\Q$), etc...

Arithmetic operations: {\tt mfcoefs} (Fourier coefficients at infinity),
{\tt mflinear} (linear combination, so including
addition/subtraction and scalar multiplication), {\tt mfmul}, {\tt mfdiv},
{\tt mfpow} (clear), etc...

Modular operations: {\tt mfbd}, {\tt mftwist}, {\tt mfhecke}, {\tt mfatkin},
{\tt mfderivE2}, {\tt mfbracket}, etc...

Creation of modular form \emph{spaces}: {\tt mfinit}, {\tt mfdim}
(dimension of the space), {\tt mfbasis} (random basis of the space),
{\tt mftobasis} (decomposition of a form on the {\tt mfbasis}),
{\tt mfeigenbasis} (basis of normalized eigenforms).

Searching for modular forms with given Fourier coefficients:

{\tt mfeigensearch}, {\tt mfsearch}.

Expansion of $F|_k\ga$: {\tt mfslashexpansion}.

Numerical functions: {\tt mfeval} (evaluation at a point in $\H$ or at a cusp),
{\tt mfcuspval} (valuation at a cusp), {\tt mfsymboleval} (computation of
integrals over paths in the completed upper-half plane), {\tt mfpetersson}
(Petersson scalar product), {\tt lfunmf} ($L$-function associated to a
modular form), etc...

Note that for now {\tt Pari/GP} is the only package for which these last
functions (beginning with {\tt mfslashexpansion}) are implemented.

\section{Suggestions for further Reading}

The literature on modular forms is vast, so I will only mention
the books which I am familar with and that in my opinion will be very useful
to the reader. Note that the classic book \cite{Shi} is absolutely remarkable,
but may be difficult for a beginning course.

In addition to the recent book \cite{Coh-Str} by F.~Str\"omberg and the author
(which of course I strongly recommend !!!), I also highly recommend the paper
\cite{Zag}, which is essentially a small book. Perhaps the most classical
reference is \cite{Miy}. The more recent book \cite{Dia-Shu} is more
advanced since its ultimate goal is to explain the modularity theorem of
Wiles et al.

\bigskip

\end{document}